\newcommand{\ignorer}[1]{}
\def\and{\ \wedge\ }
\theoremstyle{plain}
\newtheorem{lemma}{Lemma}
\newtheorem{theorem}[lemma]{Theorem}
\newtheorem{corollary}[lemma]{Corollary}
\newtheorem{proposition}[lemma]{Proposition}
\newtheorem{definition}[lemma]{Definition}
\theoremstyle{remark}
\newtheorem{remark}[lemma]{Remark}
\newtheorem{example}[lemma]{Example}
\renewcommand\Re{\operatorname{Re}}
\def\eps{\varepsilon}
\renewcommand\epsilon{\varepsilon}
\def\R{\mathbb{R}}
\def\Q{\mathbb{Q}}
\def\P{\mathbb{P}}
\def\cL{\mathcal{L}}
\def\Z{\mathbb{Z}}
\def\Q{\mathbb{Q}}
\def\One{\bm{1}}
\DeclareMathOperator{\Right}{Right}
\DeclareMathOperator{\Left}{Left}
\DeclareMathOperator{\Cat}{Cat}
\newcommand{\Old}[1]{}
\def\cA{\mathcal{A}}
\def\cB{\mathcal{B}}
\newcommand\restr[2]{{% 
		\left.\kern-\nulldelimiterspace % 
		#1 % 
		\right|_{#2} % 
	}}
\title{Tree-indexed sums of Catalan numbers}
\author[A. Bostan]{Alin Bostan}
   \address[AB]{Inria, Sorbonne Université, LIP6, F-75252 Paris, France}
          \email{alin.bostan@inria.fr}
 \author[V. Féray]{Valentin Féray}
       \address[VF]{Université de Lorraine, CNRS, IECL, F-54000 Nancy, France}
       \email{valentin.feray@univ-lorraine.fr}
 \author[P. Thévenin]{Paul Thévenin}
 \address[PT]{Université d'Angers, CNRS, LAREMA, SFR MATHSTIC, F-49000 Angers, France}
       \email{paul.thevenin@univ-angers.fr}
     \keywords{Catalan numbers, summation identities, meanders, infinite noodle}
\subjclass[2020]{05C05, 33F10}
\begin{document}

\maketitle 

\begin{abstract}
We consider a family of infinite sums of products of Catalan numbers, indexed by trees. 
We show that these sums are polynomials in $1/\pi$ with rational coefficients; the proof is effective and provides an algorithm to explicitly compute these sums. 
Along the way we introduce parametric liftings of our sums, and show that they are polynomials in the complete elliptic integrals of the first and second kind. Moreover, the degrees of these polynomials are at most half of the number of vertices of the tree.  
The computation of these tree-indexed sums is motivated by the study of large meandric systems, which are non-crossing configurations of loops in the plane.
\end{abstract}

\section{Introduction}

\subsection{Our main result}
Let $T$ be an 
(unrooted) unlabeled tree, 
where we potentially allow one single half-edge 
(i.e.~an edge having only one extremity, see \cref{fig:first_examples}, left).
Denoting its vertex-set by $V(T)$ and its edge-set by
$E(T)$, we consider the following formal sum
\begin{equation}\label{eq:def_ST}
 S(T)(t) \coloneqq
 \sum_{(x_e) \in \mathbb \Z_+^{E(T)}}  \left( \prod_{v \in V(T)}
  \Cat_{X_v} t^{X_v} \right),
  \end{equation}
where $\Cat_n \coloneqq \frac{1}{n+1} \binom{2n}{n}$ is the $n$-th Catalan number, and where for any vertex $v$ of $T$ we set $X_v=\sum_{e \ni v} x_e$ (the sum is taken over edges $e$ incident to $v$). To avoid heavy notation, we often write $S(T)$ instead of $S(T)(t)$.

Let us observe immediately that, since the map sending $(x_e)_{e \in E(T)}$ to $(X_v)_{v \in V(T)}$ is injective for all $T$, we have for $t>0$:
\begin{align*}
S(T) \leq \sum_{(X_v) \in \mathbb \Z_+^{V(T)}}  \left( \prod_{v \in V(T)}
  \Cat_{X_v} t^{X_v} \right) =  \left( \sum_{n \geq 0} \Cat_n t^n \right)^{|V(T)|},
\end{align*}
which is finite as soon as $t \le 1/4$.
These sums, and in particular their evaluation at $t=1/4$, arise naturally in the study
of a percolation model called the {\em infinite noodle}, to which we return later in the 
introduction (\cref{ssec:motivation}). 
For the moment, let us have a close look at three basic examples (\cref{fig:first_examples}). 

If $P_1^h$ is a tree with one vertex and a single half-edge, then
\begin{align*}
S(P_1^h) = \sum_{x \in \Z_+} \Cat_x t^{x}.
\end{align*}

If $P_2$ is the path with two vertices and one edge, we get
\begin{align*}
S(P_2) = \sum_{x \in \Z_+} \left( \Cat_x t^{x}\right)^2.
\end{align*}

Finally, if $P^h_2$ is the path with two vertices, one normal edge,
and an extra half-edge on one of the vertices, then we have
\begin{align*}
S(P^h_2) = \sum_{(x_1,x_2) \in \Z_+^2} \Cat_{x_1} t^{x_1} \Cat_{x_1+x_2} t^{x_1+x_2}.
\end{align*}
In this last example, $x_1$ is the variable associated to the normal edge and $x_2$ is the variable associated to the half-edge.
\begin{figure}
  \begin{center}
    \includegraphics[scale=2]{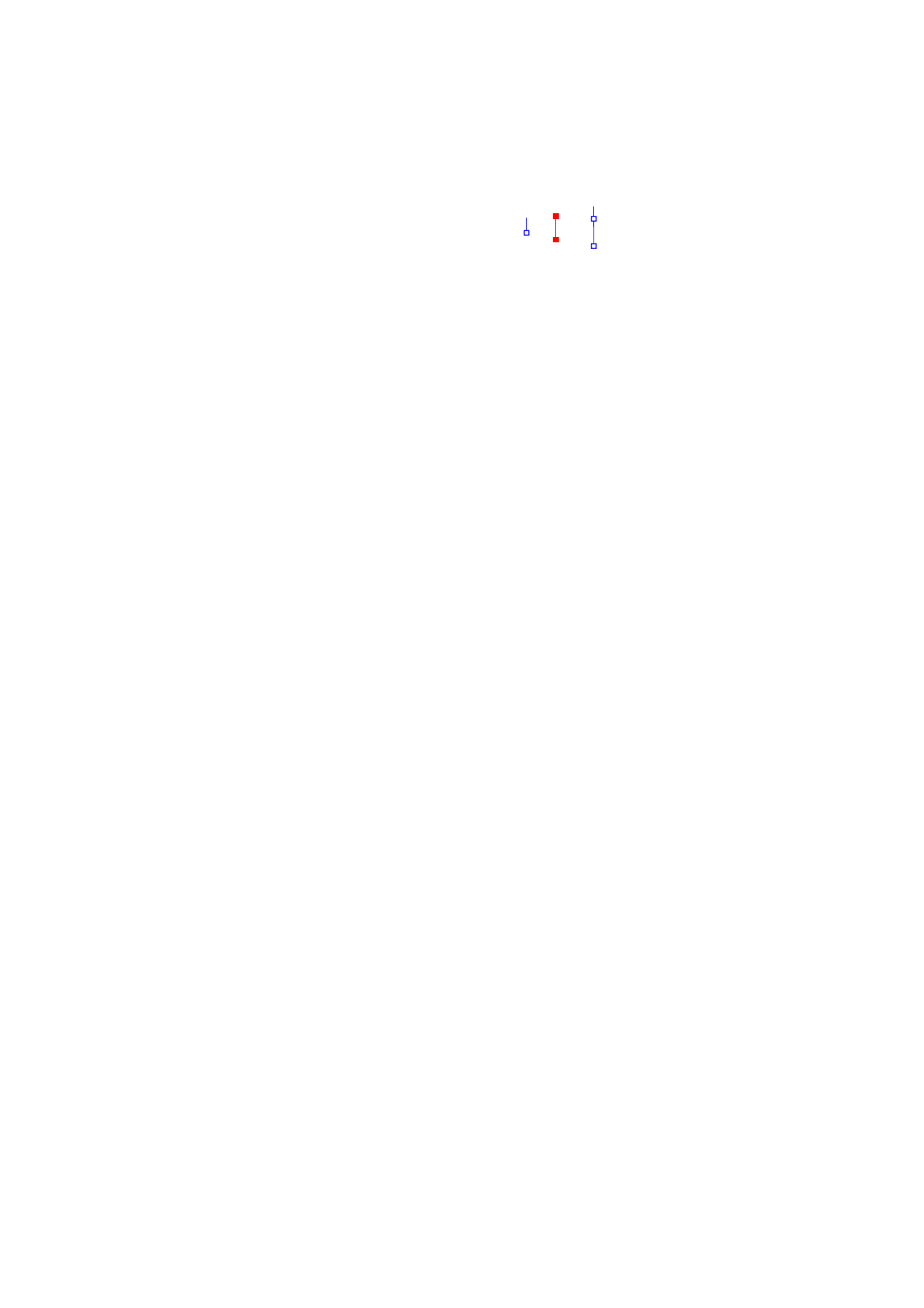}
  \end{center}
  \caption{The three trees $P_1^h, P_2, P^h_2$. Trees without half-edge are drawn in red 
  with filled vertices, while trees with one half-edge are drawn in blue with empty vertices.}
  \label{fig:first_examples}
\end{figure}

The first sum is nothing but the standard generating series of the Catalan numbers,
\[S(P_1^h) =\frac{1-\sqrt{1-4t}}{2t},\]
and in particular 
\[S(P_1^h)(1/4)=2.\]
 The values at $t=1/4$ of $S(P_2)$ and $S(P^h_2)$ can be found by standard computer algebra 
algorithms, implemented in computer algebra systems such as \texttt{Maple} or \texttt{Mathematica}: \[ S(P_2)(1/4)=\frac{16}{\pi}-4 \quad \text{and}  \quad S(P_2^h)(1/4)=\frac{8}{\pi} \, .\]
See \cref{ssec:intro_computer_algebra} for details on such algorithms, and  \cref{sec:first_examples} for a human-readable proof of these two identities.

Our main result (\cref{thm:sum-pi}) is a structural result on $S(T)$, which implies that $S(T)(1/4)$
always is a polynomial with rational coefficients in $1/\pi$, as can be observed on these first examples.
To state this result more precisely, we first need to recall 
the definition of the {\em Gauss hypergeometric functions} ${}_{2}^{}{{}{{}{F_{1}^{}}}}$.
For parameters $a,b,c \in \R$ with $-c \notin \mathbb{N}$, we set
\begin{equation}\label{eq:def_hyper}
  {}_{2}^{}{{}{{}{F_{1}^{}}}}(a,b;c;z)\coloneqq  \sum_{n \ge 0} \frac{a^{\uparrow n} b^{\uparrow n}}{c^{\uparrow n}} \frac{z^n}{n!},
\end{equation} 
where $x^{\uparrow n}\coloneqq x(x+1) \cdots (x+n-1)$ is the $n$-th raising power of $x$. 
Define the two algebras
\[
\cA \coloneqq  \Q\Big[t,t^{-1}, \, {}_{2}^{}{{}{{}{F_{1}^{}}}}\big(-\tfrac12,-\tfrac12; 1;16t^2\big)\, , {}_{2}^{}{{}{{}{F_{1}^{}}}}\big(-\tfrac12,\tfrac12; 2;16t^2\big) \Big], \text{ and }\cB=\cA \big[\sqrt{1-4t}\big]
\]
i.e.~$\cA$ (resp.~$\cB)$ is the set of polynomials in the two particular hypergeometric functions
\[
H_1(t) \coloneqq {}_{2}^{}{{}{{}{F_{1}^{}}}}\big(-\tfrac12,-\tfrac12; 1;16t^2\big)
\quad \text{and} \quad
H_2(t) \coloneqq {}_{2}^{}{{}{{}{F_{1}^{}}}}\big(-\tfrac12,\tfrac12; 2;16t^2\big)
\]
 (resp.~in $H_1(t)$, $H_2(t)$ and in $\sqrt{1-4t}$), whose coefficients are Laurent polynomials in $t$.

Note that $H_1(t)$ and $H_2(t)$ are closely related to elliptic integrals,
namely to the complete elliptic integrals of first and second kinds,
\begin{align*}
K(t) = \int_0^{\pi/2} \frac{1}{\sqrt{1-t^2 \sin^2\theta}} \ d\theta &=
  \frac\pi2 \cdot  {}_{2}^{}{{}{{}{F_{1}^{}}}}\big(\tfrac12, \tfrac12; 1; t^2\big),
    \\
E(t) = \int_0^{\pi/2} {\sqrt{1-t^2 \sin^2\theta}} \ d\theta  &=
  \frac\pi2 \cdot {}_{2}^{}{{}{{}{F_{1}^{}}}}\big(-\tfrac12, \tfrac12; 1; t^2\big) .
\end{align*}
Indeed, using contiguity relations for hypergeometric functions (see e.g. \cite[Section $2.5$]{AAR99}),
we can write
\begin{equation}\label{eq:H12_EK}
H_1(t) = 
\frac{2 \,  \big(16 t^{2}\,-1 \big)\, K \left(4 t \right) + 4 E \left(4 t \right)}{\pi},
\quad
H_2(t)  = 
\frac{\left(16 t^{2}-1\right) \, K \left(4 t \right) + \left(16 t^{2}+1\right) \, E \left(4 t \right)}{12 \pi  \,t^{2}},
\end{equation}
which imply that 
\[
\cA = \Q\Big[t,t^{-1}, \, H_1(t), H_2(t) \Big] = \Q\Big[t,t^{-1}, \, \tfrac1{\pi} K \left(4 t \right), \tfrac1{\pi} E \left(4 t \right) \Big] .
\]

Moreover, it follows from a careful asymptotic analysis that
the complete elliptic integrals of first and second kinds are %not only transcendental functions, but also 
algebraically independent\footnote{\label{fn1} In fact, a stronger result is known: namely, the evaluations of the functions $K$ and $E$
at some algebraic argument (namely, $1/\sqrt{2}$) are algebraically independent over $\Q$.
Indeed, it is known that $\pi$ and $\Gamma(1/4)$ are algebraically independent, see~\cite{Chudnovsky76} or~\cite[Thm.~14]{Waldschmidt06}. 
Hence the same holds for the numbers $K(1/\sqrt{2}) = \frac{\Gamma (1/4)^2}{4\pi^{1/2}}$ and $E(1/\sqrt{2}) = \frac{\pi^{3/2}}{\Gamma(1/4)^2} + \frac{\Gamma(1/4)^2}{8\pi^{1/2}}$. Note that this algebraic independence is false over $\Q(\pi)$, as 
$2 \, K(1/\sqrt{2}) \, E(1/\sqrt{2}) - K(1/\sqrt{2})^2 = \pi/2$ (by Legendre's relation, or by the previous equalities). Nonetheless, the algebraic independence over $\mathbb Q$ of these values at algebraic arguments implies the algebraic independence of the series $E(t)$ and $K(t)$ over $\mathbb Q(t)$, and thus over $\mathbb C(t)$ (by \cite[Lemma 7.2]{ABD19}).} over $\mathbb C(t)$, see Appendix~\ref{appendix:alg_independence} for details.
The same holds for $H_1(t)$ and $H_2(t)$, and in particular 
one cannot remove any of the four functions $H_1(t)$, $H_2(t)$, $\tfrac1{\pi}K \left(4 t \right)$, $\tfrac1{\pi}E \left(4 t \right)$ from our two lists of generators of~$\cA$.

We will see $\cB$ (and hence also its subalgebra $\cA$) as a filtered algebra by 
assigning degree~$2$ to $H_1(t)$ and $H_2(t)$,
and degree 1 to $\sqrt{1-4t}$.
The degree of an element $b$ in~$\cB$ is then defined as the minimal degree of a polynomial
representation of $b$ in terms of these generators, and is denoted $d_b$.

At $t=1/4$, the values of the above hypergeometric functions are given by the Gauss summation identity (writing $\Gamma$ for the usual gamma function): for $K \in \{0,1\}$, we have
\begin{equation} \label{eq:values_elliptic}
{}_{2}^{}{{}{{}{F_{1}^{}}}}\big(-\tfrac12,K-\tfrac12; K+1;1\big)
= \frac{\Gamma(K+1)\Gamma(2)}{\Gamma(K+\tfrac32) \Gamma(\tfrac32)}
= \begin{cases}
4/\pi & \text{ for }K=0;\\
8/(3\pi)  & \text{ for }K=1.
\end{cases}
\end{equation}
Consequently, if $F$ is in $\cB$, then $F(1/4)$ is a polynomial in $1/\pi$.
 Our main result is the following.
 \begin{theorem}\label{thm:sum-pi}
If $T$ is a tree without half-edges, then $S ( T ) \in \cA$.
If $T$ is a tree with exactly one half-edge, then $S ( T ) \in \cB$.
In both cases, denoting by $V_T$ the number of vertices in $T$, we have
$d_{S(T)} \leq V_T$.
As a consequence, $S ( T )(\frac{1}{4})$ is a polynomial in $1/\pi$
with rational coefficients, of degree at most $\left\lfloor \frac{V_T}{2} \right\rfloor$.
 \end{theorem}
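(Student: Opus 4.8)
The natural strategy is induction on the number of vertices $V_T$, peeling off one leaf at a time. The key observation is that the tree-indexed sum has a recursive structure: if $v$ is a leaf of $T$ attached by an edge $e$ to a vertex $w$, then the variable $x_e$ appears in exactly two factors, $\Cat_{X_v} t^{X_v}$ and $\Cat_{X_w} t^{X_w}$. Since $v$ is a leaf (and carries no half-edge), we have $X_v = x_e$, so summing over $x_e$ first produces, for each fixed value of $X_w - x_e$ coming from the other edges at $w$, a convolution-type sum $\sum_{x_e \ge 0} \Cat_{x_e} t^{x_e} \Cat_{(\cdots)+x_e} t^{(\cdots)+x_e}$. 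So the heart of the matter is to understand the one-variable "transfer" operation that replaces the pair $(v,w)$ by a single modified vertex, and to show it preserves membership in $\cA$ (or $\cB$) while raising the degree by at most the amount budgeted by the inductive hypothesis.

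To make this precise I would introduce an auxiliary family of generating functions carrying an extra "boundary" parameter recording the contribution of the half-summed vertex. Concretely, set
\begin{equation*}
  G_k(t) \coloneqq \sum_{n \ge 0} \Cat_n \, \Cat_{n+k} \, t^{2n+k},
\end{equation*}
and more generally the functions obtained by attaching a subtree through a vertex whose residual degree is a free nonnegative integer $k$. The leaf-removal step then expresses $S(T)$ as a finite combination of such boundary-weighted sums for the smaller tree $T' = T \setminus v$. The main analytic input is that the $G_k$ themselves lie in $\cA$ with controlled degree: these are exactly the sums governed by the elliptic integrals $K(4t)$, $E(4t)$ (equivalently $H_1,H_2$), and the shift in $k$ is handled by the contiguity/recurrence relations for hypergeometric functions, which keep us inside $\cA$ while raising the degree by at most $2$ per pair of vertices absorbed. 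The half-edge case differs only in that an unpaired leftover Catalan generating series contributes the factor $\sqrt{1-4t}$, explaining the passage from $\cA$ to $\cB$ and the role of the degree-$1$ generator.

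The bookkeeping of the degree bound $d_{S(T)} \le V_T$ is where the argument must be set up carefully: the filtration assigns degree $2$ to each of $H_1,H_2$ and degree $1$ to $\sqrt{1-4t}$, so absorbing a leaf-edge-vertex pair should cost at most $2$ in degree, giving $2$ per two vertices, i.e. $V_T$ overall. I expect the \textbf{main obstacle} to be controlling this degree growth through the transfer step: one must show that the boundary-parameter recurrence expressing $G_{k}$ (and its subtree-decorated analogues) in terms of $G_{k-1}, G_{k-2}$ does not inflate the degree, and that the closure of $\cA$ under the relevant operation (multiplication by algebraic functions of $t$ and application of the shift) respects the filtration. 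Establishing such a \emph{stable} closed family of functions on which the leaf-removal operator acts with bounded degree cost---rather than merely proving membership in $\cA$---is the crux; once that invariant is in place, the induction closes and the final evaluation at $t=1/4$ follows from \eqref{eq:values_elliptic}, with the degree in $1/\pi$ at most $\lfloor V_T/2 \rfloor$ because each of the two generators $H_1,H_2$ specializes to a single power of $1/\pi$, halving the filtration degree.
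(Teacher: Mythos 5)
Your proposal correctly identifies the base-case input --- your $G_k(t)=\sum_n \Cat_n\Cat_{n+k}t^{2n+k}$ is exactly the paper's $S_{=,K}$, which indeed lies in $\cA$ with degree at most $2$ by contiguity relations (\cref{prop:basecase}) --- but the reduction to that base case is where there is a genuine gap, and you yourself flag it as ``the crux'' without resolving it. The problem is that leaf removal does \emph{not} express $S(T)$ as a finite combination of boundary-weighted sums for the smaller tree. Summing out the leaf variable $x_e$ replaces the parent's Catalan weight by the function $Y\mapsto G_Y(t)$ of the residual index $Y=\sum_{e'\ni w,\,e'\neq e}x_{e'}$; equivalently, in terms of your boundary-parameter family $F_{T'}(k)$, the peel yields an \emph{infinite} sum $\sum_{k\ge 0}\Cat_k t^{2k}F_{T'}(k)$ over the boundary parameter. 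Knowing that each $F_{T'}(k)$ lies in $\cA$ with bounded degree says nothing about this outer sum: already for $P_3$ the first peel leaves $\sum_k \Cat_k t^k G_k(t)$, which unwinds to the original double sum, so no progress has been made. The contiguity recurrence cannot close this family either, since its coefficients depend rationally on $k$, so one cannot substitute a fixed $\Q[t,t^{-1}]$-linear combination for $G_Y$ inside a sum over $Y$; and iterating peels produces an ever-growing hierarchy of convolution weights $\sum_m \Cat_m t^m G_{k+m}(t)$, etc. Exhibiting a finite stable family of weights on which the transfer operator acts --- the invariant you say would make ``the induction close'' --- is essentially the entire difficulty, and nothing in the proposal constructs it.

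The paper's route is structurally different and is designed precisely to avoid this closure problem. It first changes to vertex variables (\cref{lem:changing_variables}), so that every Catalan factor carries a single free summation index and all coupling between vertices is moved into linear equality/inequality constraints; it then generalizes to decorated trees carrying data $(\eps_v,\bowtie_v,K_v)$ (\cref{thm:sum-pi-generalized}) and manipulates constraints: equalities factorize the sum, void constraints let the quadratic Catalan recurrence merge two variables into one, and inequalities are flipped via $\One[a\ge b]+\One[a\le b]=1+\One[a=b]$. Even these relations do not suffice: for ``long star'' trees the reductions only relate sums attached to trees of the same size, and this circularity is broken by solving invertible linear systems (\cref{lem:linear_system}, with matrix $2I-J$) inside an induction on total path length rather than on the number of vertices. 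Vertex variables, inequality decorations, and the linear-system step have no counterpart in your plan, and without something playing their role the induction you describe cannot close.
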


In addition, we provide an algorithm that allows to compute exactly all these sums.
This algorithm is described in pseudo-code language in Appendix~\ref{appendix:algo}.

We also obtain a compact expression of the evaluation $S ( T )(\frac{1}{4})$ when $T$ is a \emph{star tree}: if $T=T_s$ is the tree
with a central vertex connected to $s$ leaves, then we have, for $s\geq 0$,
\begin{equation}
\label{eq:stars}
S(T_{s+3})(1/4)  = 
\frac{64}{\pi} \cdot \left(
\sum_{k=0}^s   {\binom{s}{k}} \cdot \frac{1}{\left(2 k +1\right) \left(2 k +3\right) \left(2 k +5\right)}\right) .
\end{equation}

See Section~\ref{sec:stars} for the proof, and for related results. Interestingly, this formula does not follow from
our general approach, but rather relies on hypergeometric series identities.

In	\cref{appendix:explicit_sums}, we provide explicit expressions for the sums $S(T)$ and their evaluations $S(T)\left(
\frac14 \right)$ when $T$ is one of the 24 trees with $V(T) \leq 7$ vertices (without half-edges).
By inspecting these expressions, we make the following observations and conjectures.
\begin{itemize}
\item The degree bounds in \cref{thm:sum-pi} are rarely reached: e.g. among the 11 trees with 7 vertices, only 3 trees exhibit a polynomial in $1/\pi$ of degree $\left\lfloor \frac{7}{2} \right\rfloor=3$.
\item However, the bound $\left\lfloor \frac{V_{T}}{2} \right\rfloor$ seems to be tight in the following sense. For every~$n$, there exists a graph $T_n$ with $n$ vertices such that $S(T)\left(
\frac14 \right)$ is a polynomial in $1/\pi$ of degree $\left\lfloor \frac{n}{2} \right\rfloor$: when $n$ is even, then a plausible candidate for $T_n$ is the ``line tree''~$T_{n,a}$ (generalizing $T_{2,a}, T_{4,a}$ and $T_{6,a}$ in~\cref{fig:trees2-6}), while when $n$ is odd a candidate for $T_n$ is the ``long fork'' $T_{n,b}$ (generalizing $T_{3,b}=T_{3,a}, T_{5,b}, T_{7,b}$ in~\cref{fig:trees2-6,fig:trees7}). We could not prove this.
However, our proof involves a generalization of \cref{thm:sum-pi}
(given in~\cref{thm:sum-pi-generalized}), for which we could prove an analogue statement, see~\cref{prop:long stars maximize the degree}.
\item The constant term of $S(T)\left(\frac14 \right)$, when written as a polynomial in $1/\pi$ seem to always be an \emph{integer} number; also, the numerators and denominators of the other coefficients have \emph{surprisingly small} prime factors.
\item The degree of  $S(T)\left(\frac14 \right)$ in $1/\pi$ is never $0$; in particular, $S(T)$ is a transcendental power series and  $S(T)\left(\frac14 \right)$ is a transcendental number.
%\item the degree of $S(T)$ w.r.t. $t$, as a Laurent polynomial in $H_1$ and $H_2$, lies between $-\left\lfloor \frac{V_{T}}{2} \right\rfloor$ and $0$;
\item The value of $S(T)\left(\frac14 \right)$ is increasing when $T$ goes through the lists in~\cref{fig:trees2-6,fig:trees7}, which are ordered 
 according to their degree sequences sorted in nondecreasing order (using the lexicographic order on such sequences), breaking ties in an appropriate way.
\end{itemize}
We leave the proofs of these observations to further work.
	
\begin{remark}\label{rem:triangle}
One can define~$S(G)$ for general graphs $G$, and not only for trees,
in the exact same fashion.
However, it turns out that $S ( G )(\frac{1}{4})$ does not always belong to $\Q[\tfrac1{\pi}]$.
For example, if $G$ is the cycle graph $C_3$ with three vertices, then $S ( C_3 )(\frac{1}{4})$ lies in $\Q[\sqrt 2] \setminus \Q$ (see \cref{rem:residue}). 
More precisely, we can prove that
\begin{equation}\label{eq:SC3}
S(C_3)
=
\frac{ 3- 3 \cdot {}_{2}^{}{{}{{}{F_{1}^{}}}}\big(-\tfrac14, \tfrac14; \tfrac32;16t^2\big)}{2 t^{2}}
=
\frac{(1-4t)^{3/2} - (1+4t)^{3/2} + 12t}{8t^3},
\end{equation}
yielding $S ( C_3 )(\frac{1}{4}) = 24-16\sqrt{2}$.
The series $S(G)$ might also diverge at $t=1/4$ for some graphs; e.g., for the complete graph with 4 vertices, we have 
\[S(G)(t)= (3 H_1^2 - 12t^2 H_2^2 - 4H_1 + 2(32t^2 - 1)H_2 - 48t^2 + 3)/(128t^4 - 8t^2) . \]
Investigating $S(G)$ for general graphs~$G$ (even for trees with
more than one half-edge) is outside the scope of this paper. We plan to investigate such sums in a subsequent work.
\end{remark}
	
\begin{remark}\label{rem:residue}
	Using the methods of~\cite{BoLaSa17}, the sum $S(T)$ can be written\footnote{Technically, to use the machinery from~\cite{BoLaSa17}, one first needs to rewrite $\Cat(n)$ as $\binom{2n+1}{n+1} - 2 \binom{2n}{n-1}$.} as the (formal) residue w.r.t. $(z_v)_{v \in V(T)}$ of the multivariate rational function
\begin{equation} \label{eq:residue}
\frac{\displaystyle{\prod_{v \in V(T)}} \frac{(2 z_{v}+1) (1-z_v)}{z_v^2}}{\displaystyle{\prod_{e=\{v_1,v_2\} \in E(T)}} \left( 1-t^2 \frac{(1+z_{v_1})^2}{z_{v_1}} \frac{(1+z_{v_2})^2}{z_{v_2}}\right)} ,
\end{equation}	
thus it is equal to the multiple integral of this rational function over a cycle in~$\mathbb{C}^{V(T)}$, see Corollary 2.7 and Proposition 2.9 in~\cite{BoLaSa17}.
Therefore, the properties of $S(T)$ are mainly governed by the geometry of the denominator of~\eqref{eq:residue}. Each curve of the form $xy = t^2 {(1+x)^2} {(1+y)^2}$ is an elliptic curve for $|t|<\frac14$, which degenerates into a genus-0 curve for $t=\pm\frac14$.
The emergence in~\cref{thm:sum-pi} of (polynomials in) the complete elliptic integrals could be linked to the fact that $S(T)$ can be written as an integral of a rational function whose denominator is a product of elliptic curves. 
However, although it provides intuition, a geometric proof of \cref{thm:sum-pi} based on \eqref{eq:residue} remains to be found. 
%Note also that this remark has its limitations. For instance, if $G=C_3$ is the cycle graph with three vertices like in~\cref{rem:triangle}, then $S(C_3)$ is both hypergeometric and algebraic, see \cref{eq:SC3}. At the same time, $S(G)$ is the formal residue of the rational function~\eqref{eq:residue} (with $T$ replaced by $G$), whose denominator defines a product of three elliptic curves. However, the algebraicity of $S(G)$ makes the emergence of the (transcendental) complete elliptic integrals less visible. 
\end{remark}

\subsection{Motivation: shape probabilities in the infinite noodle} \label{ssec:motivation}
Our initial interest in the sums $S(T)$ comes from discrete probability theory,
and more precisely from a percolation model called the {\em infinite noodle}.
It was introduced in \cite{InfiniteNoodle}, and further studied in \cite{feray2023meandric,borga2023geometry_meandric}.
Informally, we consider the bi-infinite discrete line $\mathbb Z$ and choose for each $i$ in $\mathbb Z$
two directions, one of which is drawn as an arrow above the line and the other as an arrow below the line.
All directions are chosen independently of each other, and uniformly among $\Left$ and $\Right$.
Then we connect arrows pointing to the right with arrows pointing to the left, in the unique non-crossing way 
(independently above and below the line).
An example is shown in \cref{fig:infinite_noodle}, left.
\begin{figure}[!ht]
  \begin{center}
    \includegraphics{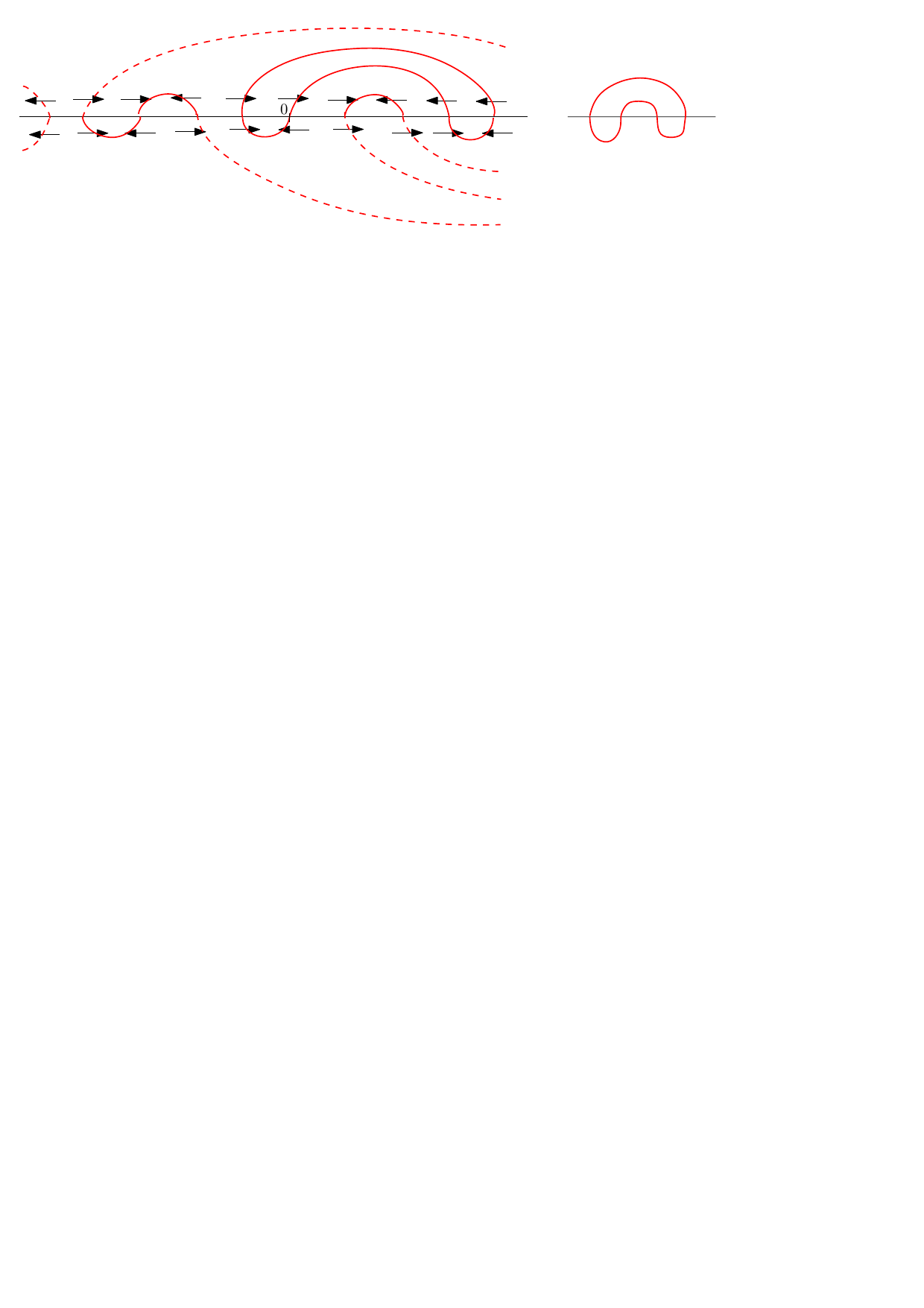}
  \end{center}
  \caption{A realization of the infinite noodle, and the corresponding shape of the component of $0$.}
  \label{fig:infinite_noodle}
\end{figure}

We are interested in the component of $0$. In particular, it is conjectured in \cite{InfiniteNoodle}
that it is finite almost surely~and in \cite[Conjecture 1.10]{borga2023geometry_meandric} that the probability that it has size at least $k$
decays as $k^{-(2\sqrt 2 -1)/7+o(1)}$.
The sums $S(T)$ come into the picture when computing the probability that the component of~$0$ has a given {\em shape}.
Being able to compute these sums does not solve the above conjectures,
but might give insight on the problem.

Informally, the shape of the component of $0$ is obtained by considering the relative order of its intersection points
with the real line, but forgetting the actual values of these points; see \cref{fig:infinite_noodle}, right.
The shape is what is called {\em a meander}, i.e.~a simple curve crossing the real line in multiple points (but only with simple crossings),
up to continuous deformation. 
We refer to \cite{feray2023meandric} for precise definitions of meanders
and of shape of components of the infinite noodle.

Writing $S_0$ for the shape of the component of $0$ and $M$ for a meander,
the probability $\mathbb P[S_0=M]$ can be expressed as a product $\prod_{i=1}^d S(T_i)$ evaluated at $t=1/4$, 
where $(T_i)_{i \le d}$ is a collection of trees, exactly one of them without a half-edge, associated with the meander $M$
(this follows from \cite[Section 4.1]{feray2023meandric}; details are given in \cref{sec:infinite_noodle}).
Hence, \cref{thm:sum-pi} implies the following statement.
\begin{corollary} \label{coro:proba_meanders}
  For any meander $M$, the probability $\mathbb P[S_0=M]$ belongs to $\mathbb Q[\tfrac1{\pi}]$.
\end{corollary}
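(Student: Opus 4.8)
The plan is to derive \cref{coro:proba_meanders} as a direct consequence of \cref{thm:sum-pi}, using the combinatorial dictionary that translates shape probabilities in the infinite noodle into products of tree-indexed sums. The starting point is the formula, imported from \cite[Section 4.1]{feray2023meandric}, expressing the probability $\mathbb P[S_0=M]$ as a finite product $\prod_{i=1}^d S(T_i)(\tfrac14)$, where $(T_i)_{i\le d}$ is the collection of trees associated with the meander $M$, exactly one of which carries a half-edge. I would first spell out this translation in \cref{sec:infinite_noodle} carefully enough that the reader sees that the $T_i$ are genuinely trees (so that \cref{thm:sum-pi} applies) and that the half-edge bookkeeping matches the hypotheses of the theorem: precisely one tree in the family is a tree with a single half-edge (so $S(T_i)(\tfrac14)\in\cB(\tfrac14)$ for that index) and all the others are half-edge-free (so $S(T_i)(\tfrac14)\in\cA(\tfrac14)$).

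The core of the argument is then purely algebraic. By \cref{thm:sum-pi}, each factor $S(T_i)$ lies in $\cA$ or in $\cB$, hence its evaluation at $t=\tfrac14$ is obtained by substituting $t=\tfrac14$ into a polynomial in the generators $H_1,H_2$ (and possibly $\sqrt{1-4t}$). Using \eqref{eq:values_elliptic}, the values $H_1(\tfrac14)$ and $H_2(\tfrac14)$ are explicit rational multiples of $1/\pi$, while $\sqrt{1-4t}$ vanishes at $t=\tfrac14$. Consequently every $S(T_i)(\tfrac14)$ is a polynomial in $1/\pi$ with rational coefficients, i.e.\ an element of $\Q[\tfrac1{\pi}]$; the term $S(P_1^h)(\tfrac14)=2$ and the values $S(P_2)(\tfrac14)=\tfrac{16}{\pi}-4$, $S(P_2^h)(\tfrac14)=\tfrac{8}{\pi}$ displayed in the introduction serve as sanity checks. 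The only remaining observation is that $\Q[\tfrac1{\pi}]$ is a ring, so the finite product $\prod_{i=1}^d S(T_i)(\tfrac14)$ again lies in $\Q[\tfrac1{\pi}]$, which is exactly the claim.

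I do not expect a genuine obstacle in the algebra, since closure of $\Q[\tfrac1{\pi}]$ under multiplication is immediate and \cref{thm:sum-pi} does all the structural work. The only point requiring care is the combinatorial input: I must ensure that the objects $T_i$ produced by the decomposition of \cite{feray2023meandric} really are \emph{trees} (with at most one half-edge each, and exactly one half-edge overall), rather than more general graphs, because \cref{rem:triangle} warns that $S(G)(\tfrac14)$ need not lie in $\Q[\tfrac1{\pi}]$ once $G$ contains a cycle. Thus the main task, deferred to \cref{sec:infinite_noodle}, is to verify that the meandric decomposition yields a forest structure and to track the half-edges correctly; once this is established, the corollary follows immediately from \cref{thm:sum-pi} and the ring structure of $\Q[\tfrac1{\pi}]$.
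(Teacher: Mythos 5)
Your proposal follows the paper's own route exactly: import the formula of \cite[Section 4.1]{feray2023meandric} (\cref{lem:lowercomputation}), build the face-adjacency forest of \cref{ssec:meanders_and_trees}, factorize the sum over its connected components, and conclude by \cref{thm:sum-pi} together with the ring structure of $\Q[\tfrac1{\pi}]$. One factual slip: you have the half-edge bookkeeping reversed --- in the paper's decomposition exactly one tree (the one formed by the \emph{interior} faces) is \emph{without} a half-edge, while the exterior-face trees (possibly several of them) each carry exactly one half-edge; this is immaterial to the validity of your argument, since \cref{thm:sum-pi} covers both kinds of trees and each factor evaluates into $\Q[\tfrac1{\pi}]$ either way, but it should be stated correctly. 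You also omit the rational prefactor $2^{-4k+1}k$ appearing in \eqref{eq:proba_meandre}, which is of course harmless for membership in $\Q[\tfrac1{\pi}]$.
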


In addition, our algorithm provides a way to compute these probabilities exactly.

\subsection{What computer algebra can (and cannot) do}
\label{ssec:intro_computer_algebra}
For a fixed tree~$T$, one can obtain different types of information about the sum $S(T)(t)$ in~\eqref{eq:def_ST}, using various computer algebra tools. Ideally, a user equipped with a computer algebra system such as \href{https://www.maplesoft.com/}{\texttt{Maple}}, \href{https://www.wolfram.com/mathematica/}{\texttt{Mathematica}}, \href{https://www.sagemath.org/}{\texttt{SageMath}}, etc.
might hope to get directly a ``formula'' for the power series $S(T)(t)$ and for the value $S(T)(1/4)$, by simply appealing to a one-line command. 

While relatively simple sums, such as those in \cref{sec:first_examples}, can be computed this way, the situation is far less simple in general. We now briefly explain what computer algebra can (and cannot) do, with respect to sums such as $S(T)(t)$ and $S(T)(1/4)$, relying on a number of advanced functionalities for symbolic summation and integration. More detailed accounts dedicated to this topic are~\cite{Schneider07,Salvy19,KoWo21}.

The sum $S(T)(t)$ in~\eqref{eq:def_ST} belongs to the class of \emph{definite multiple sums with parameters}; the parameter here is~$t$. Such sums can be tackled algorithmically using the so-called ``holonomic systems approach'' initiated by Zeilberger in the 1990s~\cite{Zeilberger90}. Its central algorithmic paradigm is called ``creative telescoping''; it is an important and very active research area in computer algebra. The central concept is that of \emph{D-finite functions} (power series satisfying linear differential equations with polynomial coefficients) and of \emph{P-recursive sequences} (sequences satisfying linear recurrence equations with polynomial coefficients, or equivalently, coefficient sequences of D-finite functions), to which the recent book~\cite{Kauers2023} is dedicated.

In fact, the coefficient $u_n = u_n(T)$ of $t^n$ in $S(T)(t)$ in~\eqref{eq:def_ST} is of a very particular form: it is a \emph{multiple binomial sum} in the sense of~\cite{BoLaSa17}\footnote{To see this, it is convenient to replace $\Cat_n$ by its version without divisions, $\binom{2n+1}{n+1} - 2 \binom{2n}{n-1}$.}. As such, $S(T)(t) = \sum_{n \geq 0} u_n t^n$ is a generating function of a very special type: it is the diagonal of a multivariate rational function~\cite[\S3]{BoLaSa17}. It can be written as a multiple definite integral, and also as an algebraic residue, of some explicit multivariate rational functions. Due to these general theories, we know beforehand that the sequence $(u_n(T))_{n \geq 0}$
is P-recursive and that the power series $S(T)(t)$ is D-finite. Moreover, there are (several generations of) \emph{creative telescoping algorithms} for computing recurrence relations satisfied by $(u_n)_{n \geq 0}$ and linear differential equations for $S(T)(t)$, starting either from the expression~\eqref{eq:def_ST}, or from a diagonal or residue expression of it~\cite{Chyzak00,Koutschan10,BoLaSa13,Lairez16}. Creative telescoping algorithms do not always deliver the minimal-order differential equation satisfied by the integral/residue/diagonal; however, there exist efficient algorithms for performing this \emph{minimization} step~\cite{BoRiSa24}. 

Now, due to our structural result(~\cref{thm:sum-pi}), we moreover know that the minimal-order linear differential
equation satisfied by $S(T)(t)$ decomposes as the least common left multiple (LCLM) of smaller-order
equations, each of them being homomorphic with symmetric powers of Gauss hypergeometric equations; the
corresponding computations (LCLM factorizations, symmetric powers, homomorphisms) can be performed
using algorithms due to van Hoeij \cite{Hoeij96} (and implemented for instance in the \texttt{DEtools} package in \texttt{Maple}).

All in all, the closed form expression predicted by \cref{thm:sum-pi} can be computed from a basis of solutions of the minimal-order differential equation: $S(T)$ is written as a polynomial in 
$H_1(t) = {}_{2}^{}{{}{{}{F_{1}^{}}}}\big(-\tfrac12,-\tfrac12; 1; 16 t^2\big)$
and
$H_2(t) = {}_{2}^{}{{}{{}{F_{1}^{}}}}\big(-\tfrac12,\tfrac12; 2; 16 t^2\big)$.
Finally $S(T)(1/4)$ is deduced as an explicit polynomial in $1/\pi$ using the classical identities~\eqref{eq:values_elliptic}, 
$H_1(1/4) = 4/\pi$ 
and
$H_2(1/4) = 8/(3\pi)$.
Below, we illustrate this algorithmic chain on a non-trivial example. A similar type of
hypergeometric expressions for generating functions appeared in a different combinatorial context
(for counting lattice walks in the quarter plane) in~\cite{BoChHoKaPe17}, where tools similar to
the ones described here were applied.

Note that our \cref{thm:sum-pi} was crucially used in the algorithmic chain sketched above. Without it, it would be by no means clear how to solve in explicit terms the minimal linear differential equation for $S(T)(t)$. 
Finally, we insist on the fact that the algorithm sketched above only works for a fixed tree~$T$ (it does not prove~\cref{thm:sum-pi}), and that its (time and space) complexity depends on the size of the tree~$T$. It would be interesting to estimate its complexity and to compare it to the one of the algorithm described in this paper (and summarized in Appendix~\ref{appendix:algo}).

\begin{remark} \label{rem:algo_variant}
A variant of the algorithm described above, which is more efficient in practice, is based on our structural result (\cref{thm:sum-pi}) and finds the representation of $S(T)$ as an element in the algebra $\cA$ by using the degree bounds and Hermite-Padé approximation algorithms~\cite{BeLa94}, before specializing the result at $t=1/4$.

Another route, more symbolic-numeric, to find an expression of $S(T)(1/4)$ would be the following.
There exist algorithms able to evaluate efficiently a D-finite power series (here, $S(T)(t)$) at an
algebraic point (here, $t=1/4$); the result is however not given by a formula, but rather by an
approximation of the value (here, $S(T)(1/4)$) whose computed digits are guaranteed to be correct
up to the required precision~\cite{MeSa10,Mezzarobba10}. This approach enables approximating the
value $S(T)(1/4)$ with high precision, typically hundreds or thousands of decimal digits. In many
cases, this is sufficient for algorithms performing constant recognition (based on integer
relation algorithms such as PSLQ~\cite{FeBaAr99,BaBr01} and LLL~\cite{LLL82,NoStVi11}) to identify
the value.
But here the user should be aware of two important aspects regarding this powerful symbolic-numeric
approach: first, even with our \cref{thm:sum-pi} at hand, constant recognition algorithms need
quite a high numerical precision as input; second, even when they are successful, their output
(here, the expression of $S(T)(1/4)$ as a polynomial in $1/\pi$) is only \emph{guessed}, and
\emph{not proved}. 
\end{remark}

\begin{example}
Consider the tree 
$T = $
\begin{small}
	\begin{tikzpicture}[scale=.66]
\draw[white] (-2,0) -- (2,0);
\draw[white] (0,2) -- (0,2.8);
\draw (0,0) -- (0,1) -- (0,2) -- (-1,1) (-1,1) -- (-1,0) (0,2) -- (1,1) (-1,1) -- (-2,0) (0,1) -- (1,0);
\draw[fill] (0,0) circle(.1);
\draw[fill] (0,1) circle(.1);
\draw[fill] (0,2) circle(.1);
\draw[fill] (1,1) circle(.1);
\draw[fill] (-1,1) circle(.1);
\draw[fill] (-1,0) circle(.1);
\draw[fill] (-2,0) circle(.1);
\draw[fill] (1,0) circle(.1);
\draw (-.8,1.5) node[red]{$a$};
\draw (-.2,1.5) node[red]{$b$};
\draw (.8,1.5) node[red]{$c$};
\draw (-.7,.5) node[red]{$e$};
\draw (.8,.5) node[red]{$g$};
\draw (.18,.5) node[red]{$f$};
\draw (-1.8,.5) node[red]{$d$};
\end{tikzpicture}
\end{small}
with its corresponding sum:
\[S(T)(t) = \sum_{\Z_+^7} \Cat_{a+b+c} \Cat_{a+d+e} \Cat_{b+f+g}  \Cat_c \Cat_d \Cat_e \Cat_f \Cat_g t^{2(a+b+c+d+e+f+g)} .\]

The algorithm sketched above first computes that 
\[ S(T)(\sqrt{t}) = 1 + 7 \, t + 58 \, t^{2} + 542 \, t^{3} + 5508 \, t^{4}
	+ 59508 \, t^{5} +\cdots \]
satisfies a linear differential equation of order 10 (with coefficients in $\Q[t]$ of degree~33), 
then decomposes it as the LCLM of four linear differential equations $L_1, \ldots, L_4$, with $L_k$ of order~$k$. Solving each differential equation $L_k(y(t)) = 0$ as explained above, and finding the appropriate linear combination, it then deduces that
\begin{equation*}
	\begin{split}
S(T)(t) = 
\frac{1}{1680 \, t^{8}}
\Big(
105 \, t^{2} \, H_1 \, H_2^{2} + 
210 \, t^{2} \, H_1 H_2  + 
105 \, t^{2} \, H_2^{2} +
3 \, \left(67 t^{2}+2\right) \, H_1 +
 \\ 
- 3 \, \left(232 t^{4}+114 t^{2}+2\right)\, H_2 
- (840 \, t^{4} + 315 t^{2})
\Big) \, ,
\end{split}
\end{equation*}
where, as before,
\[
H_1 = {}_{2}^{}{{}{{}{F_{1}^{}}}}\big(-\tfrac12,-\tfrac12; 1; 16t^2 \big)
\quad \text{and} \quad
H_2 = {}_{2}^{}{{}{{}{F_{1}^{}}}}\big(-\tfrac12,\tfrac12; 2; 16t^2 \big) \ .
\]
From there, and using the identities $H_1(1/4) = 4/\pi$ and $H_2(1/4) = 8/(3\pi)$, it concludes 
\[
S(T)(1/4) = \frac{2^{16}}{9} \cdot \left( \frac{1}{\pi^3} + \frac{1}{\pi^2} \right) - \frac{2^{13}}{35 \cdot \pi} - 7 \cdot 2^7.
\]
\end{example}

\subsection{Notation and terminology}
In the whole paper, we will always consider rooted trees, although our sums $S(T)$ are a priori defined for unrooted trees, and only depend on isomorphism classes of such unrooted trees.
The root of the tree will always be denoted by $\rho$. The height of a vertex $v \in T$ is defined as $h(v) \coloneqq  d_T(\rho,v)$, where $d_T$ denotes the usual graph distance on $T$ (all edges have length $1$). For $v,w \in T$, denote by $\llbracket v,w \rrbracket$ the unique geodesic in $T$ between the vertices $v$ and $w$. We say that $v$ is an ancestor of~$w$ if $v \in \llbracket \rho, w \rrbracket$. If this holds, we also say that $w$ is a descendant of $v$. We define the \textit{fringe subtree} rooted at a vertex $v$ as the tree formed by $v$ and all its descendants, keeping all edges existing in $T$ between these vertices.

\subsection{Outline of the paper}

We start in \cref{sec:infinite_noodle} by explaining in more detail how the sums $S(T)$ appear in the context of the infinite noodle. We then consider a few trees of small size in \cref{sec:first_examples}: we compute the associated sums and introduce some intuition about how to handle the general case. In Section~\ref{sec:change_variables}, we introduce a more general family of series, associated to \textit{decorated trees}, see \cref{ssec:generalization}. These decorated trees are actually the ones on which our algorithm will be performed. 

The proof of our main result is based on an induction on the size of the tree; \cref{sec:base_case} is devoted to the base case, namely trees with two vertices, while \cref{sec:relations} gathers relations between sums associated to different trees, which are needed for the induction step. We show in \cref{sec:more_examples}, on some examples, how these relations are used in practice. 

Finally, the last results needed for our induction to work is the study of special decorated trees called \textit{long stars}, which is performed in \cref{sec:long_stars}. This allows us to prove our main result in \cref{sec:proof_main_theorem}. The last section, \cref{sec:stars}, is devoted to the proof of~\eqref{eq:stars}. Finally, we provide four appendices: \cref{appendix:alg_independence}, where we prove the algebraic independence of $H_1(t)$ and $H_2(t)$; \cref{appendix:algo}, which contains a pseudo-code language representation of our algorithm; \cref{appendix:running_algo}, which runs our algorithm on a non-trivial example; and \cref{appendix:explicit_sums}, which provides an extensive list of all sums associated to trees with at most $7$ vertices (without half-edge).

\section{Shape probabilities in the infinite noodle}
\label{sec:infinite_noodle}

\subsection{The formula from \cite{feray2023meandric}}

By definition, a meander $M$ of size $k$ is a simple curve in the plane intersecting the real line
exactly at points $0, \ldots, 2k-1$, with simple intersection points.
Meanders are always considered up to continuous deformation fixing the horizontal axis.
Combinatorially, a meander can be represented 
as a pair of non-crossing pair-partitions, being drawn respectively above and below the real line
and chosen such that the resulting graph consists of a single loop, see \cref{fig:meander_faces}.

Given a meander $M$, we consider its union with the real line;
this divides the plane in regions, two unbounded ones and several bounded ones.
We call the bounded regions {\em faces} of the meander, 
and denote their set by $\mathcal F(M)$. 
Some faces are {\em interior faces}, in the sense that they are contained in the interior
of the simple curve defining the meander, the others are called {\em exterior faces}.
A face $F$ in $\mathcal F(M)$ is incident to a number of segments $[i;i+1]$;
we let $I(F)$ be the set of indices $i$ such that $F$ is incident to $[i;i+1]$.
It is easy to see that
 if $F$ is an interior face (resp.~exterior face) of the meander, then $I(F)$
consists only of even (resp.~odd) integers.
A meander with ten faces and some of the corresponding index sets
are shown on \cref{fig:meander_faces}.
\begin{figure}
  \[
  \includegraphics[scale=1.2]{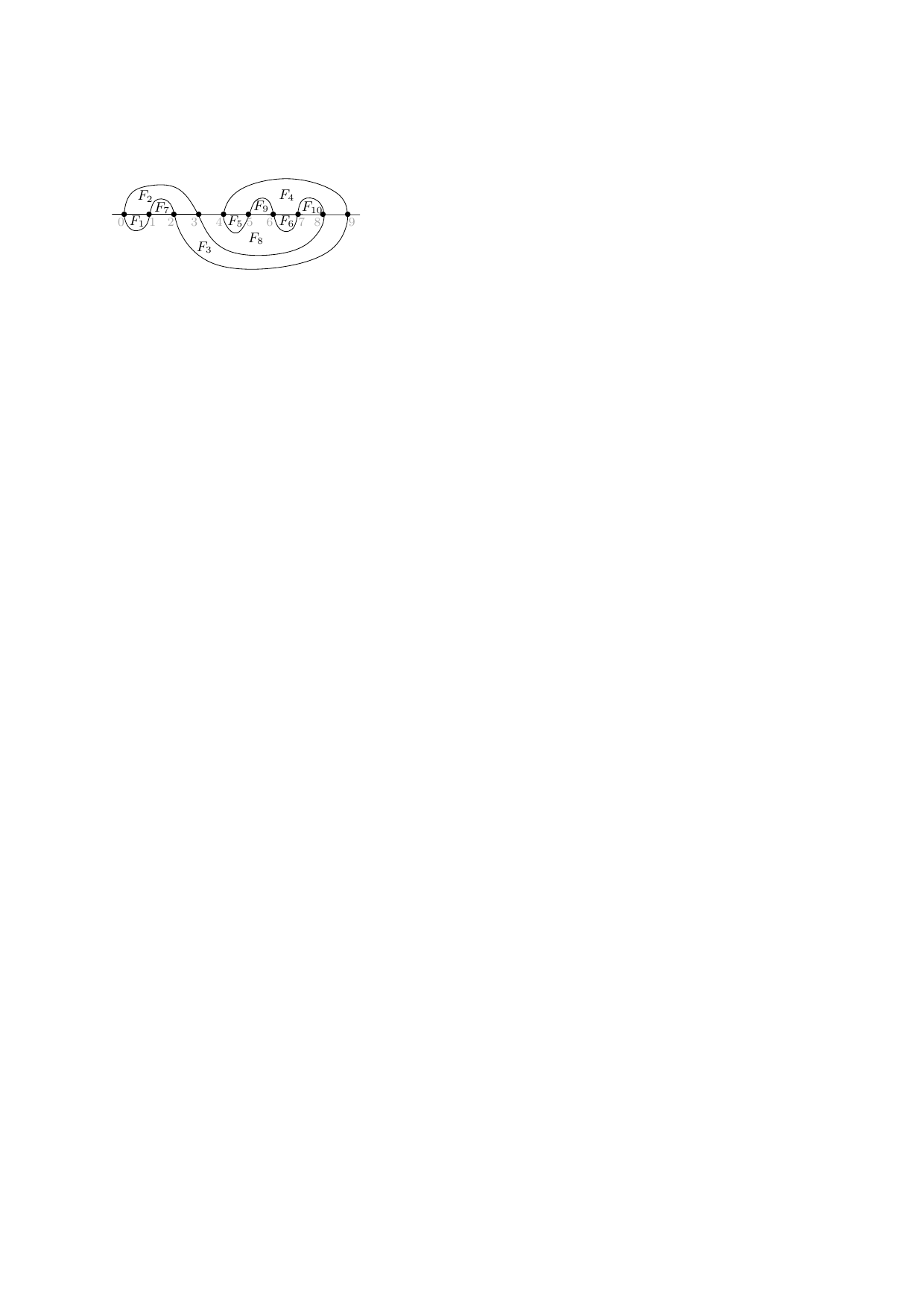}
  \]
  \caption{A meander of size $k=5$ and its faces.
  Faces $F_1$ to $F_6$ are interior faces, while faces $F_7$ to $F_{10}$ are exterior faces.
In this example, we have
$I(F_3)=\{2,8\}$, $I(F_4)=\{4,6,8\}$, $I(F_8)=\{3,5,7\}$.}
  \label{fig:meander_faces}
\end{figure}

With this notation, we have the following statement (see \cite[Proposition 10]{feray2023meandric}), where $S_0$ denotes the shape of the component of $0$ in the infinite noodle.
\begin{proposition}
\label{lem:lowercomputation}
For any meander $M$ of size $k$, we have
\begin{equation}\label{eq:proba_meandre}
  \P (S_0=M) = 2^{-4k+1} k\, \sum_{\ell_0,\dots,\ell_{2k-2} \ge 0}  \left( \prod_{F \!\in\ \mathcal F(C)}
 \Cat_{\ell_{I(F)}} 4^{-\ell_{I(F)}} \right),
 \end{equation}
 where, for a set $I$ of indices, we use the notation $\ell_I=\sum_{i\in I} \ell_i$.
\end{proposition}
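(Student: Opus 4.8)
My plan is to compute $\P(S_0 = M)$ directly from the definition of the infinite noodle, by describing which arrow configurations produce a component of $0$ of shape $M$ and then weighting them. A configuration in the event $\{S_0 = M\}$ is specified by the positions $c_0 < c_1 < \dots < c_{2k-1}$ on $\mathbb Z$ at which the component of $0$ meets the line (with $0 = c_j$ for some $j$), realizing the two non-crossing pair-partitions of $M$, together with, in each of the $2k-1$ bounded gaps $(c_i, c_{i+1})$, a number of ``extra'' points that must belong to other components. The first reduction I would establish is a self-containedness (\emph{window-closure}) statement: once the arrows at $c_0, \dots, c_{2k-1}$ realize the arcs of $M$ and all extra points in the gaps are matched without reaching the component, the up- and down-matchings of the block $[c_0, c_{2k-1}]$ are complete within the block, so no arc crosses its boundary; hence the component of $0$ is exactly the prescribed loop irrespective of the arrows outside the window, which may then be summed out freely. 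This is the step that makes the bi-infinite model tractable.

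Next I would isolate the contribution of the $2k$ crossing points. Their $4k$ up/down arrows are entirely determined by $M$, contributing a factor $2^{-4k}$; and since the distinguished point $0$ may be any of the $2k$ crossing points --- each choice giving a genuinely different configuration on $\mathbb Z$ but the same shape --- these positions contribute a combinatorial factor $2k$. Together this yields the prefactor $2k \cdot 2^{-4k} = 2^{-4k+1} k$, which does not depend on the extra points.

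The heart of the computation is the contribution of the extra points, which I would organize face by face. The key geometric fact is localization: each sub-component made of extra points lies inside a single face of $M$, its arcs never crossing the loop or the line outside that face. Consequently the fillings of distinct faces are chosen independently, and for a fixed face $F$ the relevant data is how the extra points distribute along the bounding segments $i \in I(F)$. Attaching a variable $\ell_i$ to each segment $i$ (shared by the two faces adjacent to it), I would show that the sum over all fillings of $F$ of their arrow-probabilities equals $\Cat_{\ell_{I(F)}} \, 4^{-\ell_{I(F)}}$, the Catalan number counting the admissible non-crossing arc systems inside $F$ with interface size $\ell_{I(F)} = \sum_{i \in I(F)} \ell_i$, the power of $4$ recording the arrows they fix. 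Multiplying over faces and summing over $(\ell_i)_{i=0}^{2k-2} \in \Z_+^{\,2k-1}$ gives $\sum_{\ell} \prod_{F \in \mathcal F(M)} \Cat_{\ell_{I(F)}} 4^{-\ell_{I(F)}}$; combined with the prefactor this is the asserted identity.

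The step I expect to be the main obstacle is precisely this face-by-face bookkeeping. One must prove localization rigorously, identify the admissible fillings of a face with non-crossing structures counted by a single Catalan number, and track the arrow-weights so that the exponents match $\ell_{I(F)}$ --- all of which interacts delicately with the even/odd (interior/exterior) structure of the faces recalled before the statement, and with the segments adjacent to the unbounded region, where a sub-component may escape rather than close up. Handling this interface with the unbounded region, together with the window-closure argument in the bi-infinite setting and the verification that the $2k$ choices for the position of $0$ are counted exactly once, are where the careful analysis of \cite[Section 4.1]{feray2023meandric} is needed; the remaining summation is only the reassembly of the Catalan generating function over the faces.
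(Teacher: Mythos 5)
First, a remark on the comparison itself: the paper does not actually prove \cref{lem:lowercomputation}; it quotes it from \cite[Proposition 10]{feray2023meandric}, so your sketch has to be judged on its own terms. Its overall architecture is the right one (the prefactor $2k\cdot 2^{-4k}=2^{-4k+1}k$ from the $2k$ possible positions of $0$ and the $4k$ forced arrows, then a face-by-face Catalan count for the extra points), but the reduction you build it on is wrong: the \enquote{window-closure} claim is false for every meander of size $k\ge 2$. The event $\{S_0=M\}$ contains, with positive probability, configurations in which arcs of other components do cross the boundary of the block $[c_0,c_{2k-1}]$. Concretely, for the size-$2$ meander with upper arcs $(c_0,c_3),(c_1,c_2)$ and lower arcs $(c_0,c_1),(c_2,c_3)$, the segment between $c_1$ and $c_2$ is bordered from below by the \emph{unbounded} lower region; an extra point there can send its lower arc over $(c_0,c_1)$ to a point left of $c_0$ (or over $(c_2,c_3)$ to the right of $c_3$) without crossing anything and without touching the component of $0$. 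On $\{S_0=M\}$ the arrows of extra points on the unbounded side of a segment are therefore completely unconstrained and must be summed out freely---this is exactly why the product in \eqref{eq:proba_meandre} runs over bounded faces only. If one instead imposes your closure/localization requirement, i.e.\ that every sub-component match up inside the window face by face, one adds a spurious Catalan-matching constraint on those arrows and obtains a strictly smaller quantity than the right-hand side of \eqref{eq:proba_meandre}. (Your last paragraph concedes that a sub-component \enquote{may escape rather than close up}, but that concession contradicts the closure statement your plan relies on; the correct substitute is that arcs entering the window from outside can only live in the two unbounded regions, hence never meet the loop, while the $c_i$'s are matched as in $M$ if and only if the word of extra points in each bounded face is balanced.)

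Second, your bookkeeping inside a face is inconsistent unless a parity lemma---which you never state---is proved. A bounded face $F$ whose boundary carries $N$ extra points admits $\Cat_{N/2}$ internal non-crossing perfect matchings, and each such point contributes exactly one arrow to $F$ (its other arrow belongs to the face on the other side of the line), so the face weight is $\Cat_{N/2}\,2^{-N}$, not $\Cat_{N}\,4^{-N}$. The stated factor $\Cat_{\ell_{I(F)}}4^{-\ell_{I(F)}}$ is only recovered by writing $n_i=2\ell_i$ for the number $n_i$ of extra points in gap $i$, i.e.\ one must show that on $\{S_0=M\}$ every gap contains an \emph{even} number of extra points. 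What the matching constraints give for free is only that each face sum $\sum_{i\in I(F)}n_i$ is even; deducing that every $n_i$ is even amounts to the injectivity over $\mathbb{F}_2$ of the face/segment incidence map, which holds precisely because this incidence structure is the forest of \cref{ssec:meanders_and_trees}, whose interior tree has no half-edge and whose exterior trees carry exactly one half-edge each---the point where the single-loop property of meanders enters. Without this lemma the change of variables behind \eqref{eq:proba_meandre}, and hence your final summation, does not go through.
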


\subsection{Meanders and trees}
\label{ssec:meanders_and_trees}
Let $M$ be a meander. We will associate a forest $G_M$ with $M$.
To this end, put a vertex $v_F$ in each face $F$ of $M$. Then we connect vertices corresponding to faces sharing
a segment $[i;i+1]$ as boundary, i.e.~$v_F$ is connected to $v_{F'}$ if $F$ and $F'$ are connected.
Additionally, we add a half-edge to vertices sharing a segment $[i;i+1]$ with an unbounded region.
We call $G_M$ the resulting graph, $V(G_M)$ and $E(G_M)$ being respectively its vertex- and edge-sets.
Since a meander is made of a single curve, $G_M$ does not have cycles,
i.e.~is a forest.
More precisely, the vertices associated with interior faces form a single tree without half-edge,
while vertices associated with exterior faces might form one or several trees, each with exactly one half-edge.
An example is shown on \cref{fig:meander-and-tree}.
\begin{figure}
  \begin{center}
    \includegraphics[scale=1.2]{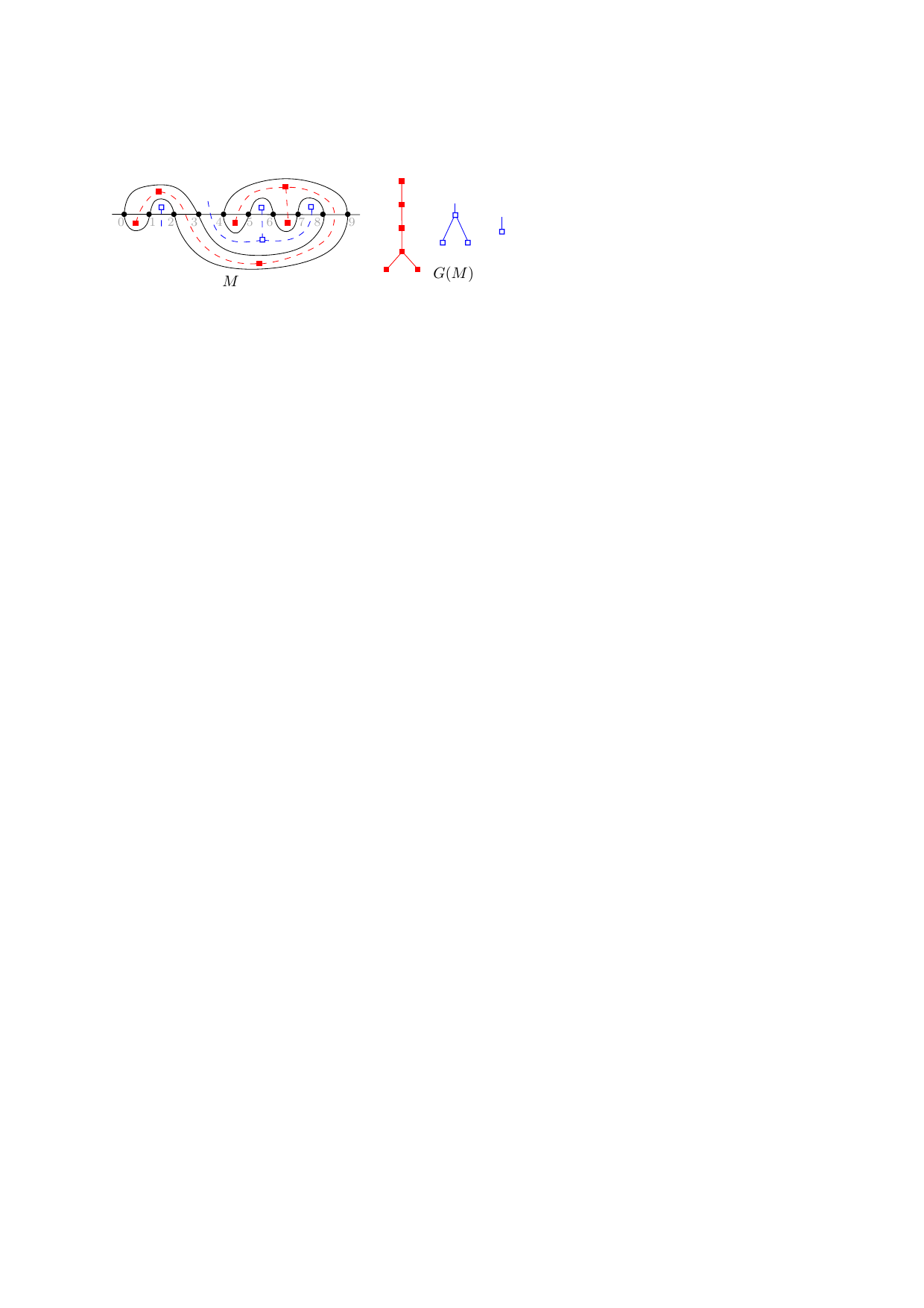}
  \end{center}
  \caption{The forest associated with the meander of \cref{fig:meander_faces}.
  On the left, we show the forest with dashed edges superimposed on the meander,
  on the right, the same forest is drawn as an abstract graph.
  Vertices corresponding to interior faces are red full squares, while those corresponding
  to exterior faces are blue empty squares.}
  \label{fig:meander-and-tree}
\end{figure}

The sum in \eqref{eq:proba_meandre} is naturally written in terms of the forest $G_M$.
Indeed edges of the forest are in one-to-one correspondence with the segments $[i;i+1]$ (for $0\le i \le 2k-2$),
so that we have one variable $x_e$ for each edge in the tree.
Also, vertices of the forest correspond to faces of the meander, so we have one (renormalized) Catalan factor for
each vertex of the forest.
Each of these Catalan factors is indexed by the sum of variables corresponding to edges incident to that vertex.
In formula,
\begin{equation}
  \P (S_0=M) = 2^{-4k+1} k\, \sum_{(x_e) \in \mathbb R_+^{E(G_M)}}  \left( \prod_{v \!\in\ V(G_M)}
  \Cat_{X_v} 4^{- X_v} \right),
 \end{equation}
 where, for each $v$ in $V(G_M)$, we set $X_v=\sum_{e:\, v\in e} x_e$.
We note that the sum factorizes over connected components.
Namely, if $G_M= \biguplus_{i=1}^d T_i$, we have
\begin{equation}
  \P (S_0=M) = 2^{-4k+1} k\, \prod_{i=1}^d \left[ \sum_{(x_e) \in \mathbb R_+^{E(T_i)}}  \left( \prod_{v \!\in\ V(T_i)}
  \Cat_{X_v} 4^{- X_v} \right) \right] = 2^{-4k+1} k\, \prod_{i=1}^d S(T_i).
\end{equation}
Thus the computation of $\P (S_0=M)$ can be reduced to that of $S(T)$,
as claimed in the introduction.

\section{Some first examples}
\label{sec:first_examples}
In this section, we explain how to compute the sum $S(T)$ in \eqref{eq:def_ST} for small trees $T$.
This serves as an introduction of some basic manipulations used in the proof of \cref{thm:sum-pi}.

\subsection{The path with two vertices}
\label{ssec:ex_P2}
As in the introduction, we denote by $P_2$ the tree with two vertices connected by an edge.
The associated sum is 
\begin{align*}
S(P_2) = \sum_{x \in \Z_+} \left( \Cat_x t^{x}\right)^2.
\end{align*}
Setting $u_{n}=4 t^{2n} \Cat_{n-1}^2$ for $n \ge 1$ and $u_0=1$, it holds that, for all $n\ge 0$,
\[16 t^2 (n-\tfrac12)^2\, u_n = (n+1)^2 u_{n+1}. \]
This recursion implies that 
\[ \sum_{n \ge 0} u_n= \sum_{n \ge 0} (16t^2)^n \frac{\big((-\tfrac12)^{\uparrow n}\big)^2}{(n!)^2} = {}_{2}^{}{{}{{}{F_{1}^{}}}}\big(-\tfrac12,-\tfrac12; 1; 16t^2\big),\]
where we recall that  ${}_{2}^{}{{}{{}{F_{1}^{}}}}$
is the hypergeometric function defined in \eqref{eq:def_hyper}.
We thus get that
\begin{align*}
  S(P_2) = \frac{1}{4t^2}\left( \sum_{n \ge 1} u_n \right) =\frac{1}{4t^2}\left( -1+ {}_{2}^{}{{}{{}{F_{1}^{}}}}\big(-\tfrac12,-\tfrac12; 1; 16t^2\big)\right).
\end{align*}

The evaluation at $t=1/4$ is given by the Gauss summation identity \eqref{eq:values_elliptic}, which yields
\[{}_{2}^{}{{}{{}{F_{1}^{}}}}\big(-\tfrac12,-\tfrac12; 1; 1\big)= \frac{\Gamma(1)\, \Gamma(2)}{\Gamma(\tfrac32)^2} = \frac4{\pi},\]
and, therefore,
\[S(P_2)(1/4) = \frac{1}{4 \cdot (1/4)^2} \left(  -1 + {}_{2}^{}{{}{{}{F_{1}^{}}}}\big(-\tfrac12,-\tfrac12; 1; 1\big) \right) = 4 \left(\frac{4}{\pi}-1\right) = \frac{16}{\pi} - 4.\]
This computation, or to be more precise, a generalization of it which can be treated
with the same method (see~\cref{ssec:base_case}),
will serve as base case in the proof of our main theorem.

\subsection{The path with two vertices, and an extra half-edge}
\label{ssec:ex_P2h}
We now consider a tree with two vertices connected by an edge, and an extra half-edge on one of them.
As in the introduction, we denote this tree by $P^h_2$. The associated sum is 
\begin{align*}
S(P^h_2) = \sum_{x_1,x_2 \in \Z_+} \Cat_{x_1} t^{x_1} \Cat_{x_1+x_2} t^{x_1+x_2}.
\end{align*}
We perform a change of variable and set $m=x_1$ and $\ell=x_1+x_2$.
Note that $m$ and $\ell$ are nonnegative integers,
and that we necessarily have $\ell \ge m$.
We get 
\[S(P^h_2)= \sum_{\ell, m \ge 0} \Cat_{\ell} \Cat_m t^{\ell+m} \One[\ell \ge m].\]
The idea is now to use that 
\begin{equation}\label{eq:reversing_inequality_example}
  \One[\ell \ge m] + \One[m \ge \ell] = 1 + \One[m=\ell].
\end{equation}
We multiply by $\Cat_{\ell} \Cat_m t^{\ell+m}$ and sum over $\ell, m \ge 0$.
By symmetry, both $\One[\ell \ge m]$ and $\One[m \ge \ell]$ yield $S(P^h_2)$. We get
\begin{align*}
2 S(P^h_2) = \left(\sum_{\ell} \Cat_{\ell} t^{\ell}\right)^2 + \sum_{\ell \ge 0} \Cat_\ell^2 t^{2\ell}
= \frac{1}{4 t^2}\left( 1-\sqrt{1-4t} \right)^2 + \frac{1}{4t^2} \left( -1+  {}_{2}^{}{{}{{}{F_{1}^{}}}}\big(-\tfrac12,-\tfrac12; 1; 16t^2\big)\right).
\end{align*}
Specializing to $t=1/4$, we get
\begin{align*}
2 S(P^h_2)(1/4) = 4 + \big( \tfrac{16}{\pi} - 4 \big)= \tfrac{16}{\pi}.
\end{align*}
We conclude that $S(P^h_2)(1/4)=\frac{8}{\pi}$.

Some ideas from this computation will be useful in the general case.
In particular,
we will always start by a change of variables to get a sum of products of Catalan numbers
indexed by single summation variables subject to some inequalities.
We will also make use of relations like \eqref{eq:reversing_inequality_example} to \enquote{manipulate} inequalities in a convenient way.
%The symmetry argument is however specific to this simple setting.

\subsection{The path with three vertices}
\label{ssec:ex_P3}
Let us now consider the path with three vertices (without half-edge),
which we denote by $P_3$.
The associated sum is 
\begin{align*}
  S(P_3) = \sum_{x_1,x_2 \in \Z_+} \Cat_{x_1} t^{x_1} \Cat_{x_1+x_2} t^{x_1+x_2} \Cat_{x_2} t^{x_2}.
\end{align*}
We split the sum according to the value of $X=x_1+x_2$:
\[ S(P_3) =\sum_{X \ge 0} \Cat_X t^{2X} \left( \sum_{x_1,x_2 \ge 0 \atop x_1+x_2=X} \Cat_{x_1} \Cat_{x_2} \right).\]
Using the standard quadratic recurrence relation for Catalan numbers, we recognize the inner sum to be equal to $\Cat_{X+1}$, so that
\[ S(P_3) =\sum_{X \ge 0} \Cat_X \Cat_{X+1} t^{2X}.\]
This is a variant of the sum $S(P_2)$, which can be handled in a similar way.
Defining $v_n$ by $v_n=-2\Cat_{n-1} \Cat_n t^{2n}$ for $n \ge 1$ and $v_0=1$, we have the recursion 
\[16 t^2\big(n-\tfrac12\big)\, \big(n+\tfrac12\big) \, v_n = (n+1)(n+2) \, v_{n+1}, \text{ for $n \ge 0$.}\]
Using the definition of hypergeometric functions, we obtain

\[\sum_{ n \ge 0} v_n = {}_{2}^{}{{}{{}{F_{1}^{}}}}\big(-\tfrac12,\tfrac12; 2;16t^2\big), \]
so that
\begin{align*}
S(P_3) = \frac{1}{2t^2}\left(1-{}_{2}^{}{{}{{}{F_{1}^{}}}}\big(-\tfrac12,\tfrac12; 2;16t^2\big)\right).
\end{align*}
Again, the value at $t=1/4$ can be determined through the Gauss summation identity~\eqref{eq:values_elliptic}:
\[ {}_{2}^{}{{}{{}{F_{1}^{}}}}\big(-\tfrac12,\tfrac12; 2;1\big) = \frac{8}{3\pi},\]
and, consequently,
\[  S(P_3)(1/4)= \frac{1}{2 \cdot (1/4)^2}\left(1-\frac{8}{3\pi} \right) = 8- \frac{64}{3\pi}.\]

This example illustrates the use of the quadratic Catalan recurrence to reduce the number of summation indices.
This technique will also be used in the general case.
This however induces a shift in the summation indices, which prompts us
to consider more general sums than $S(T)$ to be able to prove \cref{thm:sum-pi} 
by induction. This generalization is introduced in \cref{ssec:generalization} below.

\subsection{The path with four vertices}
\label{ssec:ex_P4}
We consider a last example, namely the path $P_4$ with four vertices,
 showcasing that, in general, we need to combine the changes of variables and inequality manipulations from \cref{ssec:ex_P2h}
with the reduction of number of variables via the quadratic Catalan recurrence from \cref{ssec:ex_P3}.
By definition, we have
\[S(P_4)=  \sum_{x_1,x_2,x_3 \in \Z_+} \Cat_{x_1} t^{x_1} \Cat_{x_1+x_2} t^{x_1+x_2} \Cat_{x_2+x_3} t^{x_2+x_3}
\Cat_{x_3} t^{x_3}.\]
We set $\ell_1=x_1$, $m_1=x_1+x_2$, $\ell_2=x_2+x_3$ and $m_2=x_3$. This defines a bijection from $\Z_+^3$ to the set of quadruples $(\ell_1,m_1,\ell_2,m_2)$ satisfying the relations
\[\ell_1+\ell_2=m_1+m_2 \text{ and }\ell_1 \le m_1.\]
Using $\One[\ell_1 \ge m_1] + \One[m_1 \ge \ell_1] = 1 + \One[m_1=\ell_1]$ and the symmetry between $\ell$'s and $m$'s,
we get
\begin{multline*}
2 S(P_4) = \sum_{\ell_1,m_1,\ell_2,m_2 \atop
 \ell_1+\ell_2=m_1+m_2} \Cat_{\ell_1} \Cat_{m_1} \Cat_{\ell_2} \Cat_{m_2} t^{\ell_1+m_1+\ell_2+m_2} \\ + \sum_{\ell_1,m_1,\ell_2,m_2 \atop
 \ell_1=m_1, \ell_2=m_2} \Cat_{\ell_1} \Cat_{m_1} \Cat_{\ell_2} \Cat_{m_2} t^{\ell_1+m_1+\ell_2+m_2}. 
\end{multline*}
The second sum can be factored out and computed as follows:
\[\sum_{\ell_1,m_1,\ell_2,m_2 \atop
 \ell_1=m_1, \ell_2=m_2} \Cat_{\ell_1} \Cat_{m_1} \Cat_{\ell_2} \Cat_{m_2} t^{\ell_1+m_1+\ell_2+m_2} = \left( \sum_{\ell} \Cat_{\ell}^2 t^{2\ell} \right)^2 = \left( S(P_2) \right)^2.\]
To compute the first sum, we split the sum according to the value $L$ of 
 $\ell_1+\ell_2$ and use the quadratic recurrence:
 \begin{multline*}
\sum_{\ell_1,m_1,\ell_2,m_2 \atop
 \ell_1+\ell_2=m_1+m_2} \Cat_{\ell_1} \Cat_{m_1} \Cat_{\ell_2} \Cat_{m_2} t^{\ell_1+m_1+\ell_2+m_2}=
 \sum_{L \ge 0} t^{2L} \Cat_{L+1}^2\\
 = t^{-2} \sum_{L' \ge 1} t^{2L'} \Cat_{L'}^2
 = t^{-2} \cdot S(P_2)  - t^{-2}.
 \end{multline*}
Altogether, we get
\begin{align*}
S(P_4) &= \frac12  \left( S(P_2) \right)^2
+ \frac{1}{2t^2} \left( S(P_2) \right) - \frac{1}{2t^2}\\
&=\frac{1}{32t^4}\left(-3 - 16 t^2 + 
  2{}_{2}^{}{{}{{}{F_{1}^{}}}}\big(-\tfrac12, -\tfrac12 ; 1 ; 16 t^2\big) + 
  {}_{2}^{}{{}{{}{F_{1}^{}}}}\big(-\tfrac12, -\tfrac12 ; 1 ; 16 t^2\big)^2\right) .
\end{align*}
Specializing to $t=1/4$, we get
\[S(P_4)(1/4)  = \frac12  \left( \frac{16}{\pi}-4 \right)^2
+ 8 \left( \frac{16}{\pi}-4 \right) - 8=-32 + \frac{64}{\pi} + \frac{128}{\pi^2} .\]

\section{Changing variables and generalizing}
\label{sec:change_variables}
\subsection{Vertex variables}
We will rewrite the sum $S(T)$ using one variable per vertex.
For this, it is convenient to work with rooted trees, i.e.~trees with a distinguished vertex~$\rho$,
called the root.
If $T$ has a half-edge, we will always consider the extremity of the half-edge as the root
of the tree.
Otherwise, we choose arbitrarily a root in $T$.
Then, we color vertices at even (resp. odd) distance from the root
in white (resp.~black). Let $V_{\circ}(T)$ and $V_{\bullet}(T)$ be the sets
of white and black vertices of $T$ respectively.
Also, we use the notation $v' \le_T v$ (or sometimes $v \geq_T v'$) to say that $v'$ is a descendant of $v$, 
i.e.~that $v$ lies on the path from $\rho$ to $v'$. 
Finally, we recall that if $(x_e)_{e \in E(T)}$ are edge-weights,
we let $X_v=\sum_{e \ni v} x_e$ be the sum of the weights of the edges incident to $v$.
\begin{lemma}\label{lem:changing_variables}
If $T$ is a tree without half-edge,
the map from $\mathbb Z_+^{E(T)}$ to $\mathbb Z_+^{V(T)}$
which maps $(x_e)_{e \in E(T)}$ to $(X_v)_{v \in V(T)}$ is injective and its range
is the set of $(X_v)_{v \in V(T)}$ in $\mathbb Z_+^{V(T)}$ satisfying the following conditions
\begin{equation}
\label{eq:changeofvariable}
 \begin{cases}
\quad\bullet\quad \sum_{w  \in V_{\circ}(T)} X_w = \sum_{b \in V_{\bullet}(T)} X_b, \\
\quad \bullet\quad \text{for all $v \in V_{\circ}(T)$, }  \sum_{w  \in V_{\circ}(T), w \le_T v} X_{w} \ge 
\sum_{b \in V_{\bullet}(T), b \le_T v} X_b, \\
\quad \bullet\quad \text{for all $v \in V_{\bullet}(T)$, } 
\sum_{w \in V_{\circ}(T), w \le_T v} X_w \le 
\sum_{b \in V_{\bullet}(T),  b \le_T v} X_{b}.
\end{cases}
\end{equation}
The same holds for a tree with a half-edge replacing the first condition 
with $$\textstyle \sum_{w  \in V_{\circ}(T)} X_w  \ge \sum_{b \in V_{\bullet}(T)} X_b.$$ 
\end{lemma}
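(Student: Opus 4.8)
The plan is to identify each edge with the vertex lying just below it and to read the three conditions in~\eqref{eq:changeofvariable} as nonnegativity constraints on reconstructed edge-weights. I root $T$ at $\rho$ as in the statement (at the extremity of the half-edge, when there is one), set $\epsilon_u = +1$ for $u \in V_\circ(T)$ and $\epsilon_u = -1$ for $u \in V_\bullet(T)$, and for every vertex $v$ I let $f_v$ be its \emph{up-edge}: the edge joining $v$ to its parent if $v \neq \rho$, the half-edge if $v = \rho$ and $T$ carries one, and nothing (with the convention that its weight is $0$) if $v = \rho$ and $T$ has no half-edge. Every edge of $T$, counting the half-edge if present, is the up-edge of exactly one vertex (all non-root vertices, plus $\rho$ itself precisely in the half-edge case), so passing from $(x_e)_e$ to $(x_{f_v})_v$ is merely a relabelling of the edge-weights.

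The crux of the argument is the signed telescoping identity
\begin{equation}\label{eq:master}
\sum_{u \le_T v} \epsilon_u X_u = \epsilon_v\, x_{f_v},
\end{equation}
valid for every $v$, with the convention $x_{f_\rho}=0$ in the half-edge-free case. I would prove it by expanding $X_u = \sum_{e \ni u} x_e$ and swapping the order of summation, so that each edge $e$ gets weight $\sum_{u \le_T v,\; u \in e} \epsilon_u$. An edge with both endpoints in the fringe subtree rooted at $v$ contributes $0$, since adjacent vertices carry opposite colors; an edge with no endpoint there contributes $0$; and because the descendant set $\{u : u \le_T v\}$ is downward-closed, the only edge with exactly one endpoint inside it is $f_v$, whose interior endpoint is $v$. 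What survives is exactly $\epsilon_v\, x_{f_v}$, which is~\eqref{eq:master}.

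From~\eqref{eq:master} the lemma should follow almost formally. Injectivity is immediate, since the identity exhibits each edge-weight $x_{f_v} = \epsilon_v \sum_{u \le_T v} \epsilon_u X_u$ as an explicit function of $(X_w)_w$. Necessity of the three conditions is then read off directly: taking $v = \rho$ yields the first condition (the equality $\sum_u \epsilon_u X_u = 0$ when $x_{f_\rho}=0$, and the inequality $\sum_u \epsilon_u X_u = x_{f_\rho} \ge 0$ in the half-edge case, using $\epsilon_\rho = +1$), while taking $v \in V_\circ(T)$, resp.\ $v \in V_\bullet(T)$, turns $x_{f_v} \ge 0$ into the second, resp.\ third, condition. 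Conversely, given $(X_v)$ satisfying the relevant conditions, I would \emph{define} $x_{f_v} := \epsilon_v \sum_{u \le_T v} \epsilon_u X_u$, the conditions guaranteeing $x_{f_v} \in \Z_+$.

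It then remains to check that this candidate is a genuine preimage, i.e.\ that $\sum_{e \ni v} x_e = X_v$ for every $v$. The edges incident to $v$ are its up-edge $f_v$ together with the up-edges $f_w$ of its children $w$; using $\epsilon_w = -\epsilon_v$ for a child and the fact that the fringe subtrees of the children of $v$ partition $\{u : u \le_T v\} \setminus \{v\}$, the defining formula telescopes to $\epsilon_v^2 X_v = X_v$ at every $v \neq \rho$, and likewise at $v = \rho$ in the half-edge case. At $v = \rho$ in the half-edge-free case the same computation leaves $-\sum_{u \neq \rho} \epsilon_u X_u$, which equals $X_\rho$ precisely because of the first (equality) condition. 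I expect this final telescoping bookkeeping, together with the uniform handling of the root in the two cases, to be the only delicate point; the real content of the proof is the coloring identity~\eqref{eq:master}, after which the statement is a formal consequence.
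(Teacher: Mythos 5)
Your proof is correct and follows essentially the same route as the paper: your signed telescoping identity $\sum_{u \le_T v} \epsilon_u X_u = \epsilon_v x_{f_v}$ is exactly the paper's key relation (its equation expressing $\sum_{w \le_T v} X_w - \sum_{b \le_T v} X_b = \pm x_{e(v)}$), from which injectivity, necessity of the conditions, and reconstruction of nonnegative edge-weights are read off in the same way. The only differences are presentational: you treat the half-edge case uniformly via the up-edge convention and spell out the verification that the reconstructed weights are a genuine preimage, a step the paper dismisses as clear.
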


\begin{proof}
We consider first the case of a tree $T$ without half-edge. 
Let us consider a vector $(x_e, e \in E(T))$ mapped to $(X_v, v \in V(T))$,
i.e., for any $v$ we have  $X_v=\sum_{e \ni v} x_e$.
Since each edge has exactly one white and one black endpoints, the first condition of \eqref{eq:changeofvariable} is fulfilled. Furthermore, we have that 
\begin{align}
\label{eq:changeofvariable2}
\quad\bullet\quad \text{for all } v \in V_{\circ}(T), \sum_{w  \in V_{\circ}(T), w \le_T v} X_{w} = 
\sum_{b \in V_{\bullet}(T), b \le_T v} X_b + x_{e(v)}, \nonumber\\
\quad\bullet\quad \text{for all } v \in V_{\bullet}(T),
\sum_{w \in V_{\circ}(T), w \le_T v} X_w =
\sum_{b \in V_{\bullet}(T),  b \le_T v} X_{b} - x_{e(v)},
\end{align}
where $e(v)$ is the edge between $v$ and its parent in $T$ (and $x_{e(\rho)}=0$ for convenience).
Since all $x_{e}$'s are nonnegative, this proves the inequalities in \eqref{eq:changeofvariable}.

Conversely, being given a family of vertex-weights $(X_v, v \in V(T))$ satisfying \eqref{eq:changeofvariable}, it is clear that \eqref{eq:changeofvariable2} characterize the corresponding family of edge-weights $(x_e, e \in E(T))$.
Hence the map $(x_e, e \in E(T)) \mapsto (X_v, v \in V(T))$ is indeed injective.
 Moreover, the signs in~\eqref{eq:changeofvariable2}, together with the inequalities \eqref{eq:changeofvariable},
 ensure that all edge-weights defined this way are indeed nonnegative.
 This proves the lemma for trees without half-edges.
 
We now consider the case where $T$ is a tree with a half-edge $\overline{e}$. Then, since each other edge connects a black vertex to a white vertex, we get that
\begin{align*}
x_{\overline{e}}=\sum_{w  \in V_{\circ}(T)} X_w - \sum_{b \in V_{\bullet}(T)} X_b.
\end{align*}
Using this identity together with the same strategy as in the case without half-edge proves the result.
\end{proof}

Since variables indexed by white and black vertices play different roles,
from now on, we denote the former by $(\ell_w)_{w \in V_{\circ}(T)}$ and the latter by
$(m_b)_{b \in V_{\bullet}(T)}$.
Then using \cref{lem:changing_variables}, $S(T)$ can be rewritten as
\begin{equation}
\label{ST_with_vertex_variables}
S ( T ) = \sum_{(\ell_w), (m_b) \geq 0}\left( \prod_{w  \in V_{\circ}(T)} \Cat_{\ell_w} t^{\ell_w} \right) 
\left( \prod_{b \in V_{\bullet}(T)} \Cat_{m_b} t^{m_b} \right) \left( \prod_{v \in V(T)} \One[C_v] \right),
\end{equation}
where $C_v$ is the following condition
\[\begin{cases}
\quad\bullet\quad  \sum_{w  \in V_{\circ}(T)} \ell_w = \sum_{b \in V_{\bullet}(T)} m_b&  \text{ if $v =\rho$,} \\
\quad\bullet\quad  \sum_{w  \in V_{\circ}(T), w \le_T v} \ell_w \ge 
\sum_{b \in V_{\bullet}(T), b \le_T v} m_b
 & \text{ if $v \in V_{\circ}(T)$,} \\
\quad\bullet\quad  \sum_{w  \in V_{\circ}(T), w \le_T v} \ell_w \le 
\sum_{b \in V_{\bullet}(T), b \le_T v} m_b
 & \text{ if $v \in V_{\bullet}(T)$.}
\end{cases}\]
Again, if the tree $T$ has a half-edge with extremity $\rho$,
then the equality in the case $v=\rho$ should be replaced by an inequality.

\subsection{A generalization}
\label{ssec:generalization}
We now consider a general framework,
which (strictly) contains sums associated with trees with exactly one or without half-edge,
and in addition, will be sufficiently flexible to allow one to compute the sums recursively.

We will consider a tree $T$,
where each vertex is decorated with a triple $(\eps_v,\bowtie_v,K_v)$
living in the product space $\{-1,0,1\} \times \{=,\le,\ge,\varnothing\} \times \mathbb Z$.
Vertices $v$ with $\eps_v=-1$ (resp.~$\eps_v=0$, $\eps_v=1$)
will be referred to as black (resp.~gray, white).
We do not impose that the coloring is proper, 
i.e.~black vertices can be connected to black
vertices and similarly for white and gray vertices.

To define $S(T)$ in this general setting, we associate with each black vertex $b$ (resp.~white vertex $w$) a variable $m_b$ (resp.~$\ell_w$). No variable is associated with a gray vertex. Furthermore, with each vertex, we associate a condition (inequality or equality)
\begin{equation}\label{eq:Cv}
  \tag{$C^T_v$} \left(\sum_{w \le_T v \atop \eps_w=1} \ell_w -  \sum_{b \le_T v \atop \eps_b=-1} m_b \right) \bowtie_v \left( \sum_{v' \le_T v} K_{v'} \right),
\end{equation}
where the condition is void (and thus always satisfied) if $\bowtie_v=\varnothing$.
When there is no ambiguity regarding the tree under consideration, 
we shall denote this condition simply by $(C_v)$.

With this in hand, we define
\begin{equation}
\label{eq:def_ST_2}
S ( T ) = \sum_{(\ell_w), (m_b)}\left( \prod_{w: \, \eps_w=1} \Cat_{\ell_w} t^{\ell_w} \right) 
\left( \prod_{b: \, \eps_b=-1} \Cat_{m_b} t^{m_b} \right) \left( \prod_v \One[C_v] \right),
\end{equation}
 where the sum is taken over tuples $(\ell_w)$ and $(m_b)$ of nonnegative integers indexed by white and black vertices of $T$, respectively.
 Also, in the inner products, $w$ runs over white vertices (indicated by the condition $\eps_w=1$), 
 $b$ over black ones (indicated by the condition $\eps_b=-1$), and $v$ over all vertices.
In later discussions, the condition $\eps_w=1$ (resp.~$\eps_b=-1$) will be implicit when using the variable $w$, resp.~$b$.

We make here a small abuse of notation compared to the definition of \eqref{eq:def_ST}. Indeed, letting $T$ be a non-decorated tree with possibly a half-edge, the sum $S(T)$  considered in \eqref{eq:def_ST} corresponds to the sum $S(T')$ defined in \eqref{eq:def_ST_2}, for some decorated version $T'$ of~$T$,
where the following conditions are satisfied:
the root is white, decorated with $(=, 0)$ (or $(\ge,0)$ in case of a tree with a half-edge), there are only black and white vertices and they alternate, black vertices are decorated with $(\le, 0)$ and, 
finally, white vertices are decorated with $(\ge, 0)$. 
The fact that $S(T)$ in the sense of  \eqref{eq:def_ST} corresponds to  $S(T')$  in the sense of \eqref{eq:def_ST_2} is a consequence of \cref{ST_with_vertex_variables}.
 We will sometimes refer to $T'$ as the canonical coloration and decoration of $T$.

We will prove the following theorem, 
containing \cref{thm:sum-pi} as a special case.
 \begin{theorem}\label{thm:sum-pi-generalized}
   For any decorated tree $T$ as above, one has $S ( T ) \in \cB$. 
   Furthermore, if the root of~$T$ is decorated with an equality symbol,
   then $S ( T ) \in \cA$.

   In both cases, denoting by $\tilde{V}_T$ the number of vertices that are not gray (i.e. either black or white vertices) in $T$, we have $d_{S(T)} \le \tilde{V}_T$.
   As a consequence, for any decorated tree $T$, the evaluation 
   $S ( T )(\frac{1}{4})$ is a polynomial in $1/\pi$
   with rational coefficients and degree at most $\lfloor \frac{\tilde{V}_T}{2} \rfloor$.  
 \end{theorem}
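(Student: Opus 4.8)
The plan is to prove the statement by induction on the size of $T$, following and generalizing the reductions carried out on $P_2, P_2^h, P_3, P_4$ in \cref{sec:first_examples}. The natural base case is that of (decorated) trees with two non-gray vertices, handled in \cref{sec:base_case}: as for $P_2$ and $P_3$ in \cref{ssec:ex_P2,ssec:ex_P3}, the associated one- or two-index sum satisfies a first-order hypergeometric recurrence, so it can be summed in closed form in terms of $H_1$ and $H_2$ (together with $\sqrt{1-4t}$ when the root carries an inequality, as for $P_2^h$). One then checks directly that the answer lies in $\cA$, resp.\ $\cB$, with degree at most $2=\tilde V_T$.

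For the induction step I would choose a leaf $\lambda$ and its parent, and reduce $S(T)$ to sums attached to strictly smaller decorated trees, using the two moves from the examples, collected in \cref{sec:relations}. The first move is the quadratic Catalan recurrence $\sum_{a+b=X}\Cat_a\Cat_b=\Cat_{X+1}$, used exactly as for $P_3$: it merges two adjacent summation variables into one, decreasing the number of variables but shifting the index of the surviving Catalan factor. This shift is precisely what forces the general framework to carry the integer decorations $K_v$ and the gray vertices (vertices whose variable has been eliminated while their positional constraint survives). The second move is the reversal identity $\One[\ell\ge m]+\One[m\ge \ell]=1+\One[\ell=m]$ from \cref{ssec:ex_P2h}: it trades an inequality constraint for an equality plus a product term, where the $\One[\ell=m]$ piece typically splits $T$ into two smaller trees whose sums multiply (as the $(S(P_2))^2$ term did for $P_4$). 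Throughout, the degree bound is preserved because the filtration on $\cB$ satisfies $d_{fg}\le d_f+d_g$ and a factorization partitions the non-gray vertices, while the $\cA$/$\cB$ dichotomy is tracked through the root decoration: $\sqrt{1-4t}$ can only enter through the free Catalan series $S(P_1^h)=\frac{1-\sqrt{1-4t}}{2t}$, which appears exactly when an inequality at the root leaves a variable unbalanced.

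The hard part, I expect, is that these two moves do not apply to every tree, so the recursion stalls on a distinguished family: the \emph{long stars}, a central vertex carrying an (in)equality with several path-like branches. These are the genuinely multi-index sums that resist both reductions, and they must be computed directly in \cref{sec:long_stars} by a dedicated hypergeometric argument establishing membership in $\cB$ (or $\cA$) with the sharp degree bound; \cref{prop:long stars maximize the degree} even shows the bound is attained there, confirming that long stars are the extremal, irreducible objects of the induction. The remaining work is then purely organizational: verify in \cref{sec:proof_main_theorem} that every decorated tree can be driven by the two moves down to base cases and long stars, and that the measure used for the induction strictly decreases at each move.

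Finally, the last assertion follows formally from $S(T)\in\cB$ with $d_{S(T)}\le \tilde V_T$ together with \eqref{eq:values_elliptic}: since $H_1(1/4)$ and $H_2(1/4)$ are rational multiples of $1/\pi$ while $\sqrt{1-4t}$ vanishes at $t=1/4$, any monomial of degree $d$ in the generators specializes to a rational multiple of $1/\pi^{\lfloor d/2\rfloor}$ or to $0$, so a degree-$d$ element of $\cB$ evaluates at $t=1/4$ to a polynomial in $1/\pi$ of degree at most $\lfloor d/2\rfloor\le\lfloor \tilde V_T/2\rfloor$.
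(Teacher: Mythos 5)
Your overall architecture --- an induction driven by two reduction moves, a two-vertex base case via \cref{prop:basecase}, and a special role for long stars --- matches the paper's strategy in broad strokes, and your base case and your final deduction of the $1/\pi$ statement from the degree bound (monomials containing a factor $\sqrt{1-4t}$ vanish at $t=1/4$, the others evaluate to rational multiples of $\pi^{-(i+j)}$ with $2(i+j)\le d$) are both correct. But there are two genuine gaps, located exactly where the real mathematical work happens. The first concerns the long stars. You assert they are ``computed directly in \cref{sec:long_stars} by a dedicated hypergeometric argument.'' They are not, and they cannot be: sums such as $S(R \mid V^{\Delta}_{i,j,0})$ with $i,j>0$ do not succumb to a direct hypergeometric summation (compare \cref{rmk:stars_series_pas_si_simple}, which shows that even for plain stars the series $S(T_s)(t)$ has no hypergeometric closed form). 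The actual mechanism is linear algebra: iterating the reversal identity does \emph{not} produce a recursion expressing one long-star sum in terms of smaller trees, but a system of linear equations in which several long-star sums of the same size appear simultaneously as unknowns. This is already visible in the $T_6$ example of \cref{ssec:ex_tree_6}, where $B_{2,1}$ and $B_{1,2}$ are recovered only by inverting the matrix $(\begin{smallmatrix} 2&1 \\ 1&2 \end{smallmatrix})$; the general statement is \cref{lem:linear_system}, whose tridiagonal matrix $2I-J$ must be proved invertible, and --- importantly --- the system must be set up for long stars \emph{grafted on an arbitrary rootstock} $R$, since in the induction they arise as height-$2$ fringe subtrees of a larger tree, not as standalone trees. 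Only the extreme cases $(d,0,0)$ and $(0,d,0)$ factor directly into two-vertex sums. Without this linear-system idea your recursion does indeed ``stall,'' and nothing in your proposal unsticks it.

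The second gap is the induction measure. You call the remaining work ``purely organizational,'' with a measure that ``strictly decreases at each move.'' No such measure exists naively: several of the moves (inequality reversal, shifting $K_v$, re-decorating leaves) preserve both the number of vertices and the number of non-gray vertices. The paper inducts on the total path length $\cL(T)=\sum_v d(\rho,v)$, and even this quantity \emph{increases} at one point of the argument (Case (b2) of \cref{sec:proof_main_theorem}, where a leaf is pulled down via \cref{lem:varnothinglabel} before the long-star relations are applied); one must then verify that the subsequent reductions overshoot and land strictly below the original $\cL(T)$. Moreover, the preliminary normalization to ``good trees'' (\cref{prop:goodtrees}) requires secondary inductions on auxiliary quantities such as $\sum_v |K_v|$ and the number of badly decorated leaves, at fixed path length. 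So the bookkeeping you defer is a genuine part of the proof, not an afterthought, and as written your induction would not close.
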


\section{The base case: trees with two vertices}
\label{sec:base_case}

\subsection{Preliminary: contiguity relations between hypergeometric functions}
\label{ssec:relations}

We prove here a result that will be useful in \cref{ssec:base_case}.
For any $K \geq 0$, consider the hypergeometric function
\begin{align*}
H^{(K)}: z \mapsto {}_{2}^{}{{}{{}{F_{1}^{}}}}\big(-\tfrac12, K-\tfrac12 ; K+1 ; z\big).
\end{align*}

\begin{proposition}
\label{prop:contiguity}
For all $K \geq 0$, the series $H^{(K)}(16t^2)$ is an element of $\cA$ of degree $2$.
\end{proposition}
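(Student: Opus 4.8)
The plan is to show that $H^{(K)}(16t^2) = {}_{2}F_{1}\big(-\tfrac12, K-\tfrac12; K+1; 16t^2\big)$ lies in $\cA$ and has degree exactly $2$, by induction on $K$. The two base cases are already in hand: for $K=0$ we have $H^{(0)}(16t^2) = H_1(t)$, and for $K=1$ we have $H^{(1)}(16t^2) = H_2(t)$; both are generators of $\cA$ of degree $2$ by definition of the filtration. So the real content is the inductive step, which should express $H^{(K+1)}$ as an $\cA$-combination of $H^{(K)}$ and $H^{(K-1)}$ without raising the degree.

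The key tool will be the \emph{contiguity relations} for Gauss hypergeometric functions (see e.g.~\cite[Section~2.5]{AAR99}), which the paper has already invoked to derive \eqref{eq:H12_EK}. Concretely, I would look for a relation tying together ${}_{2}F_{1}(-\tfrac12, c-\tfrac12; c+1; z)$ for three consecutive values of the upper/lower parameter shift. The standard three-term contiguous relations express any ${}_{2}F_{1}$ whose parameters differ by integers as a rational-function (in $z$ and the parameters) combination of two neighbours. Applying the appropriate relation — the one that simultaneously increments the second numerator parameter from $K-\tfrac12$ to $K+\tfrac12$ and the denominator parameter from $K+1$ to $K+2$ — yields an identity of the schematic form
\begin{equation*}
p_K(z)\, H^{(K+1)}(z) = q_K(z)\, H^{(K)}(z) + r_K(z)\, H^{(K-1)}(z),
\end{equation*}
where $p_K, q_K, r_K$ are explicit polynomials in $z$ with coefficients depending rationally on $K$. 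Substituting $z = 16t^2$ turns these into Laurent polynomials in $t$ over $\Q$ (after dividing by the leading factor $p_K$, which is a nonzero constant or a power of $z$ that is invertible in $\cA$ since $t^{-1} \in \cA$). This exhibits $H^{(K+1)}(16t^2)$ as an $\cA$-linear combination of $H^{(K)}(16t^2)$ and $H^{(K-1)}(16t^2)$, which are in $\cA$ of degree $2$ by the induction hypothesis; since the coefficients are degree-$0$ elements (Laurent polynomials in $t$), the degree stays at most $2$.

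The main obstacle is twofold. First, one must pin down the \emph{correct} contiguity relation among the many available and verify that $p_K(16t^2)$ is genuinely invertible in $\cA$ — i.e. that it is a unit times a power of $t$, so that dividing by it keeps us inside $\Q[t,t^{-1}]$ rather than introducing a new algebraic factor like $\sqrt{1-4t}$; this is what guarantees membership in $\cA$ and not merely in $\cB$. Second, one must argue the degree is \emph{exactly} $2$ and not smaller: since $H^{(K)}$ is transcendental (its evaluation at $t=1/4$ is an irrational multiple of $1/\pi$ by \eqref{eq:values_elliptic}, hence not a Laurent polynomial in $t$), it cannot have degree $0$; and a degree-$1$ element would have to be a $\Q[t,t^{-1}]$-combination of $1$ and $\sqrt{1-4t}$, which one rules out because $H^{(K)}$ is even in $t$ (a power series in $16t^2$) whereas $\sqrt{1-4t}$ is not, and $H^{(K)}$ is not itself a Laurent polynomial. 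Thus the degree is forced to be $2$. I would carry out the computation of $p_K, q_K, r_K$ explicitly for the schematic relation above, check the unit property of $p_K$, and then assemble the induction; the lower-degree exclusion is a short parity-and-transcendence argument appended at the end.
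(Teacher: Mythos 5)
Your proposal follows essentially the same route as the paper's proof: the paper likewise handles the base cases $K=0,1$ via the generators $H_1,H_2$ and then derives, from the standard contiguity relations in \cite[Section 2.5]{AAR99}, the explicit three-term identity
$\tfrac{(K+1/2)(K+5/2)}{K+2}\, z\, H^{(K+2)}(z) = (K+1)(1+z)\,H^{(K+1)}(z) - (K+1)\,H^{(K)}(z)$,
whose leading coefficient is a nonzero rational constant times $z$ --- precisely the invertibility-in-$\Q[t,t^{-1}]$ property you flagged as the main point to verify. Your closing parity/transcendence argument that the degree is exactly (not merely at most) $2$ is a careful addition that the paper's proof leaves implicit.
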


\begin{proof}
By definition, $t \mapsto H^{(0)}(16t^2)$ and $t \mapsto H^{(1)}(16t^2)$ are elements of $\cA$, and their degree is 2. Hence, it suffices to show that, for all $K \geq 2$, $H^{(K)}(16t^2)$ is a linear combination of $H^{(0)}(16t^2)$ and $H^{(1)}(16t^2)$. In fact, we will prove the following identity: for all $K \geq 0$ all $z$ such that $|z|<1$, we have
\begin{equation}
\label{eq:contiguity}
\frac{(K+1/2)(K+5/2)}{K+2} z \, H^{(K+2)}(z) = (K+1)(1+z)H^{(K+1)}(z)-(K+1)H^{(K)}(z).
\end{equation}
In order to prove \eqref{eq:contiguity}, we rely on the so-called contiguity relations between hypergeometric functions (see e.g. \cite[Section $2.5$]{AAR99}). We use in particular the following three relations, for all $a,b,c$:
\begin{itemize}
\item[(i)] $c(1-z)\,{}_{2}^{}{{}{{}{F_{1}^{}}}}\left(a,b;c;z\right)-c\,{}_{2}^{}{{}{{}{F_{1}^{}}}}\left(a,b-1;c;z\right)+(c-a)z\,{}_{2}^{}{{}{{}{F_{1}^{}}}}\left(a,b;c+1;z%
\right)=0,$
\item[(ii)] $(c-b)\,{}_{2}^{}{{}{{}{F_{1}^{}}}}\left(a,b-1;c;z\right)+\left(2b-c+(a-b)z\right)\,{}_{2}^{}{{}{{}{F_{1}^{}}}}\left(a,b;c;z\right)+b(z%
-1)\,{}_{2}^{}{{}{{}{F_{1}^{}}}}\left(a,b+1;c;z\right)=0,$
\item[(iii)] $(c-b-1)\,{}_{2}^{}{{}{{}{F_{1}^{}}}}\left(a,b;c;z\right)+b\,{}_{2}^{}{{}{{}{F_{1}^{}}}}\left(a,b+1;c;z\right)-(c-1)\,{}_{2}^{}{{}{{}{F_{1}^{}}}}\left(a,b;c-1;z%
\right)=0$.
\end{itemize}

Applying (i) to $(a',b',c')=(a,b+2,c+1)$, we obtain:
\begin{align*}
(c+1-a)z \,{}_{2}^{}{{}{{}{F_{1}^{}}}}(a,b+2;c+2;z) =  (c+1) \,{}_{2}^{}{{}{{}{F_{1}^{}}}}(a,b+1;c+1;z) + (c+1)(z-1)\, {}_{2}^{}{{}{{}{F_{1}^{}}}}(a,b+2;c+1;z).
\end{align*}
Applying now (ii) to $(a',b',c) = (a,b+1,c+1)$, we get
\begin{multline*}
(b+1)(z-1)\,{}_{2}^{}{{}{{}{F_{1}^{}}}}(a,b+2;c+1;z) = -(c-b)\,{}_{2}^{}{{}{{}{F_{1}^{}}}}(a,b;c+1;z)\\
- (2b+1-c+(a-b-1)z)\,{}_{2}^{}{{}{{}{F_{1}^{}}}}(a,b+1;c+1;z).
\end{multline*}

Finally, applying (iii) to $(a',b',c')=(a,b,c+1)$, we get:

\begin{align*}
(c-b) \,{}_{2}^{}{{}{{}{F_{1}^{}}}}(a,b;c+1;z) = b\,{}_{2}^{}{{}{{}{F_{1}^{}}}}(a,b+1;c+1;z)-c\,{}_{2}^{}{{}{{}{F_{1}^{}}}}(a,b;c;z).
\end{align*}

Using these three identities, we get
\begin{align*}
(c+1-a)z &\,{}_{2}^{}{{}{{}{F_{1}^{}}}}(a,b+2;c+2;z) =  (c+1)\, {}_{2}^{}{{}{{}{F_{1}^{}}}}(a,b+1;c+1;z) + (c+1)(z-1) \,{}_{2}^{}{{}{{}{F_{1}^{}}}}(a,b+2;c+1;z)\\
&= (c+1)\, {}_{2}^{}{{}{{}{F_{1}^{}}}}(a,b+1;c+1;z) \\ &\qquad - \frac{c+1}{b+1} \big[ (c-b)\,{}_{2}^{}{{}{{}{F_{1}^{}}}}(a,b;c+1;z) + (2b+1-c+(a-b-1)z)\,{}_{2}^{}{{}{{}{F_{1}^{}}}}(a,b+1;c+1;z) \big]\\
&=(c+1) \,{}_{2}^{}{{}{{}{F_{1}^{}}}}(a,b+1;c+1;z) - \frac{c+1}{b+1} (2b+1-c+(a-b-1)z)\,{}_{2}^{}{{}{{}{F_{1}^{}}}}(a,b+1;c+1;z) \\ & \qquad +\frac{c+1}{b+1} \big[ b\,{}_{2}^{}{{}{{}{F_{1}^{}}}}(a,b+1;c+1;z)-c\,{}_{2}^{}{{}{{}{F_{1}^{}}}}(a,b;c;z)\big].
\end{align*}
This implies:
\begin{align*}
\frac{b+1}{c+1}(c+1-a)z\,{}_{2}^{}{{}{{}{F_{1}^{}}}}(a,b+2;c+2;z) &= \left( b+1 - (2b+1-c + (a-b-1)z +b \right)\, {}_{2}^{}{{}{{}{F_{1}^{}}}}(a,b+1;c+1;z) \\ & \qquad -c\,{}_{2}^{}{{}{{}{F_{1}^{}}}}(a,b;c;z)\\
&= \left(c+(b+1-a)z\right)\,{}_{2}^{}{{}{{}{F_{1}^{}}}}(a,b+1;c+1;z) -c\,{}_{2}^{}{{}{{}{F_{1}^{}}}}(a,b;c;z).
\end{align*}

In particular, setting $a=-1/2,b=K-1/2,c=K+1$, we get \eqref{eq:contiguity}.
\end{proof}

\subsection{Decorated trees with two vertices}
\label{ssec:base_case}
We consider here simple decorated trees $T$ with two vertices, one black, one white, and where the nonroot
vertex is decorated with $(\varnothing,0)$ (we will see in the next section
that this is a harmless hypothesis). The sum $S(T)$ depends only on the decoration $(\bowtie,K)$
of the root vertex and is given by 
\[S(T)=S_{\bowtie,K} = \sum_{a, b \geq 0} \Cat_a \Cat_b t^{a+b} \One[a \bowtie b+K] \]
The following proposition
 will serve as the initialization case in the proof of \cref{thm:sum-pi}.
\begin{proposition}
\label{prop:basecase}
For any decoration $(\bowtie,K)$, the sum
$
S_{\bowtie,K} 
$ is in $\cB$. Moreover, if $\bowtie$ is the equality symbol, then 
$
S_{\bowtie,K} 
$ is in $\cA$. In all cases, $
S_{\bowtie,K} 
$ has degree at most 2.
\end{proposition}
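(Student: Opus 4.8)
The plan is to compute each sum $S_{\bowtie,K}$ explicitly by reducing it to the building blocks $H^{(K)}(16t^2)$ from \cref{prop:contiguity} together with the ordinary Catalan generating series $C(t) = \frac{1-\sqrt{1-4t}}{2t}$, and then to read off membership in $\cA$ or $\cB$ and the degree bound from those representations. First I would treat the two extreme symbols separately. When $\bowtie = \varnothing$ the indicator is identically $1$, so $S_{\varnothing,K} = \big(\sum_{a\ge 0}\Cat_a t^a\big)\big(\sum_{b\ge 0}\Cat_b t^b\big) = C(t)^2 = \frac{1-2t-\sqrt{1-4t}}{2t^2}$, which lies in $\cB$ (its square root generator contributes degree $1$, so the degree is at most $2$, in fact well within the bound). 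When $\bowtie$ is the equality symbol, the sum collapses to a single summation variable: $S_{=,K} = \sum_{b\ge 0}\Cat_{b+K}\Cat_b t^{2b+K}$ (for $K\ge 0$; for $K<0$ one reindexes symmetrically). This is exactly the type of one-index Catalan-squared-with-shift sum handled in the examples of \cref{ssec:ex_P2,ssec:ex_P3}, so I would verify, via the same hypergeometric recursion argument, that it equals a rational multiple (a Laurent polynomial in $t$ times a constant) of $H^{(K)}(16t^2)$ plus a Laurent-polynomial correction, placing it in $\cA$ with degree at most $2$ by \cref{prop:contiguity}.

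The heart of the argument is the inequality cases $\bowtie \in \{\le,\ge\}$. Here I would use the inclusion–exclusion identity on indicators already exploited in \cref{ssec:ex_P2h}, namely
\[
\One[a \ge b+K] + \One[a \le b+K] = 1 + \One[a = b+K].
\]
Multiplying by $\Cat_a\Cat_b t^{a+b}$ and summing over $a,b\ge 0$ gives
\[
S_{\ge,K} + S_{\le,K} = C(t)^2 + S_{=,K}.
\]
This one linear relation, combined with the results of the previous paragraph, reduces the inequality cases to a single remaining computation: it suffices to evaluate one of $S_{\ge,K}$ or $S_{\le,K}$ and then obtain the other by subtraction. To pin down that one remaining sum I would again split according to the total degree, i.e. set $n = a+b$ and write $S_{\ge,K}$ as $\sum_{n} t^n \sum_{a+b=n,\ a\ge b+K}\Cat_a\Cat_b$; the inner sum is a partial sum of the convolution $\sum_{a+b=n}\Cat_a\Cat_b = \Cat_{n+1}$, truncated to the range $a \ge \lceil (n+K)/2\rceil$. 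The key technical fact I expect to need is that such a truncated Catalan self-convolution again reduces, after the hypergeometric recursion, to the $H^{(K')}$ family (possibly for a shifted parameter $K'$) times Laurent polynomials in $t$, so that by \cref{prop:contiguity} it still lands in $\cA$ of degree $2$; since $C(t)^2$ contributes the extra $\sqrt{1-4t}$, the full $S_{\ge,K}$ and $S_{\le,K}$ land in $\cB$ with degree at most $2$.

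I anticipate two obstacles, both bookkeeping rather than conceptual. The first is handling the sign of $K$ uniformly: the reindexing that turns $S_{=,K}$ into a clean one-variable shifted-Catalan sum differs for $K\ge 0$ and $K<0$, and the truncation range in the inequality cases depends on the parity of $n+K$, so I would either reduce to $K\ge 0$ by the symmetry $a\leftrightarrow b$ (which swaps $\le$ and $\ge$ and negates $K$) or carry the two parities through in parallel. The second, and the genuinely delicate point, is establishing that the truncated self-convolution of Catalan numbers is expressible through the $H^{(K)}$ basis — this is where I would lean most heavily on the recursion technique of \cref{ssec:ex_P3} and on \eqref{eq:contiguity}, checking that the telescoping of the partial sums does not introduce any new transcendental constants beyond $H^{(0)}(16t^2)$, $H^{(1)}(16t^2)$ and $\sqrt{1-4t}$. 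Once every case is written as a polynomial in these three generators with Laurent-polynomial coefficients, the degree bound of $2$ and the membership statements follow immediately, completing the proof.
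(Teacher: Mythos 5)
Your treatment of the cases $\bowtie\,=\varnothing$ and $\bowtie\,=\,=$ matches the paper's proof (reduce to $K\ge 0$ by symmetry, run the hypergeometric recursion, invoke \cref{prop:contiguity}), and your identity $S_{\ge,K}+S_{\le,K}=C(t)^2+S_{=,K}$ is correct; for $K=0$ it is exactly the paper's route, since symmetry then gives $S_{\ge,0}=S_{\le,0}$ and hence $2S_{\ge,0}=C(t)^2+S_{=,0}$, i.e.\ the computation of $S(P_2^h)$ from \cref{ssec:ex_P2h}. But for $K\ne 0$ this relation has two genuinely distinct unknowns, and everything hinges on your "single remaining computation" — and that step, as you describe it, fails. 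The conclusion you want for it is in fact false: the generating function $\sum_n t^n\sum_{a+b=n,\,a\ge b+K}\Cat_a\Cat_b$ is nothing other than $S_{\ge,K}$ itself, and $S_{\ge,K}$ does \emph{not} lie in $\cA$. Already for $K=0$, $S_{\ge,0}=\tfrac12\big(C(t)^2+S_{=,0}\big)$ with $S_{=,0}\in\cA$ but $C(t)^2\notin\cA$ (one checks, using the algebraic independence of $H_1,H_2$ over $\mathbb{C}(t)$ and parity of their expansions, that $\sqrt{1-4t}\notin\cA$). So no amount of manipulation can express the truncated self-convolution through the $H^{(K')}$ family with Laurent-polynomial coefficients alone. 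The mechanism also breaks: the coefficient $w_n$ of $t^n$ in $S_{\ge,K}$ is a \emph{sum of two} hypergeometric terms (e.g.\ $w_n=\tfrac12\big(\Cat_{n+1}+\One[n\text{ even}]\Cat_{n/2}^2\big)$ when $K=0$), so its consecutive ratio is not a rational function of $n$ and the two-term recursion trick of \cref{ssec:ex_P2,ssec:ex_P3} does not apply to it.

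The missing idea — which is where the paper goes instead — is to telescope in $K$ rather than in $n$. For $K\ge 0$ one has the disjoint decomposition $\One[a\ge b]=\One[a\ge b+K]+\sum_{r=0}^{K-1}\One[a=b+r]$, whence
\begin{equation*}
S_{\ge,K}=S_{\ge,0}-\sum_{r=0}^{K-1}S_{=,r},
\qquad
S_{\ge,-K}=S_{\ge,0}+\sum_{r=1}^{K}S_{=,-r}.
\end{equation*}
Combined with $S_{\le,K}=S_{\ge,-K}$ and the $K=0$ case above, this reduces every inequality case to $S_{\ge,0}$ plus finitely many equality sums, all of which you have already placed in $\cA$ (degree $\le 2$) or $\cB$ (degree $\le 2$); membership in $\cB$ and the degree bound then follow at once. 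With this replacement for your truncated-convolution step, the rest of your argument goes through.
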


\begin{proof}
If $\bowtie \, = \varnothing$, the condition $a \bowtie b+K$ is void (always satisfied) and
\[S_{\varnothing,K} = \sum_{a, b \geq 0} \Cat_a \Cat_b t^{a+b} = 
\left( \sum_{a \geq 0} \Cat_a t^a\right)^2=  \frac{1}{4 t^2}\left( 1-\sqrt{1-4t} \right)^2=\frac{1}{4 t^2}\left(2-4t -2\sqrt{1-4t}\right).\]
In particular, it lies in $\cB$ and has degree $1$.
We consider now the case where $\bowtie$ is the equality symbol.
Since $S_{=,K} = S_{=,-K}$, we focus on the case $K \ge 0$,
for which we have
 \[S_{=,K}=\sum_{X \ge 0} \Cat_X \Cat_{X+K} t^{2X+K}.\]
The following argument is a slight generalization of that of \cref{ssec:ex_P2,ssec:ex_P3}.
For $n \ge 1$, define $v_n$ by $v_n= \alpha_K \Cat_{n-1} \Cat_{n+K-1} t^{2n+K-2}$, where
$\alpha_K$ will be specified later.
Then, for $n \ge 1$, we have the recursion 
\[16 t^2\big(n-\tfrac12\big)\, \big(n+K-\tfrac12\big) \, v_n = (n+1)(n+K+1) \, v_{n+1}.\]
Setting $v_0=1$ and choosing $\alpha_K= \frac{-4\, (2K-1) t^{2-K}}{(K+1) \Cat_K}$, this recursion also holds for $n=0$.
Using the definition of hypergeometric functions, we obtain
\[\sum_{ n \ge 0} v_n = {}_{2}^{}{{}{{}{F_{1}^{}}}}\big(-\tfrac12,K-\tfrac12; K+1;16t^2\big), \]
so that
\[
S_{=,K}  = \frac{1}{\alpha_K} \sum_{n \ge 1} v_n
= \frac{1}{\alpha_K} \, \Big({}_{2}^{}{{}{{}{F_{1}^{}}}}\big(-\tfrac12,K-\tfrac12; K+1;16t^2\big) -1 \Big).
\]
In particular, by~\cref{prop:contiguity}, $S_{=,K}$ is in $\cA$ and has degree $2$. It remains to consider the sums $S_{\ge,K}$ and $S_{\le,K}$. Since one has $S_{\le,K}=S_{\ge,-K}$, we only consider the former. For $K=0$, the sum  $S_{\ge,0}$ coincides with the quantity $S(P_2^h)$ computed in Section~\ref{ssec:ex_P2h}. The formula obtained there shows that, indeed,
$S_{\ge,0}$ is in $\cB$ and has degree $2$.
Furthermore, for $K \geq 0$, we have
\begin{align*}
S_{\ge,K} = S_{\ge,0}-\sum_{r=0}^{K-1} S_{=,r}  \ \text{and}\ S_{\ge,-K} = S_{\ge,0}+\sum_{r=1}^{K} S_{=,-r}.
\end{align*}
Hence for all $K$ in $\Z$, the sum $S_{\ge,K}$ is in $\cB$ and has degree at most $2$, concluding the proof.
\end{proof}

\section{Relations between our sums}
\label{sec:relations}
\subsection{Pictorial representations}
We present some relations between the quantities $S( T )$.
These relations are helpful to prove \cref{thm:sum-pi} and are generalizations
of the manipulations made in computing our first examples (\cref{sec:first_examples}).
As much as possible, we represent these relations pictorially.
Here is a list of conventions for such pictures.
\begin{itemize}
\item As said above, the value of $\eps_v$ is encoded in the color of the vertex
(black for $\eps_v=-1$, gray for $\eps_v=0$, white for $\eps_v=1$).
Hence vertices in the pictures are only decorated by pairs in $\{=,\le,\ge,\varnothing\} \times \mathbb Z$.
We use vertices with several colors to indicate that the relation 
is valid regardless of the color of the vertex.
\item A Delta symbol $\Delta$ represents a generic pair in $\{=,\le,\ge,\varnothing\} \times \mathbb Z$. We write $\Delta +K$ to indicate that the second coordinate
of $\Delta$ should be incremented by $K$.
\item We write $\le_0$, $\ge_0$, $=_0$ and $\varnothing_0$ for $(\le,0)$, $(\ge,0)$,
$(=,0)$ and $(\varnothing,0)$.
  \item 
A polygon with a capital letter $T$, $U$, \ldots, represents a generic portion of a tree. 
When there is no ambiguity about which portion should be attached where in the right-hand side, we may also simply use dashed lines.
\item Finally, we extend $S$ by linearity and multiplicativity to linear combinations of forests
of rooted trees, and simply write $F_1 \equiv F_2$ for $S(F_1)=S(F_2)$.
\end{itemize}

\subsection{Relations using the vertex conditions}

\subsubsection*{Relations at the root.}
The first lemma allows to factorize sums, when the root $\rho$ satisfies $\bowtie_\rho=\varnothing$.

\begin{lemma}
\label{lem:root_varnothing}
If the root vertex satisfies $\bowtie_\rho=\varnothing$, then one of the following factorization formulas holds, depending on its color: 
 $$\begin{array}{c} \includegraphics{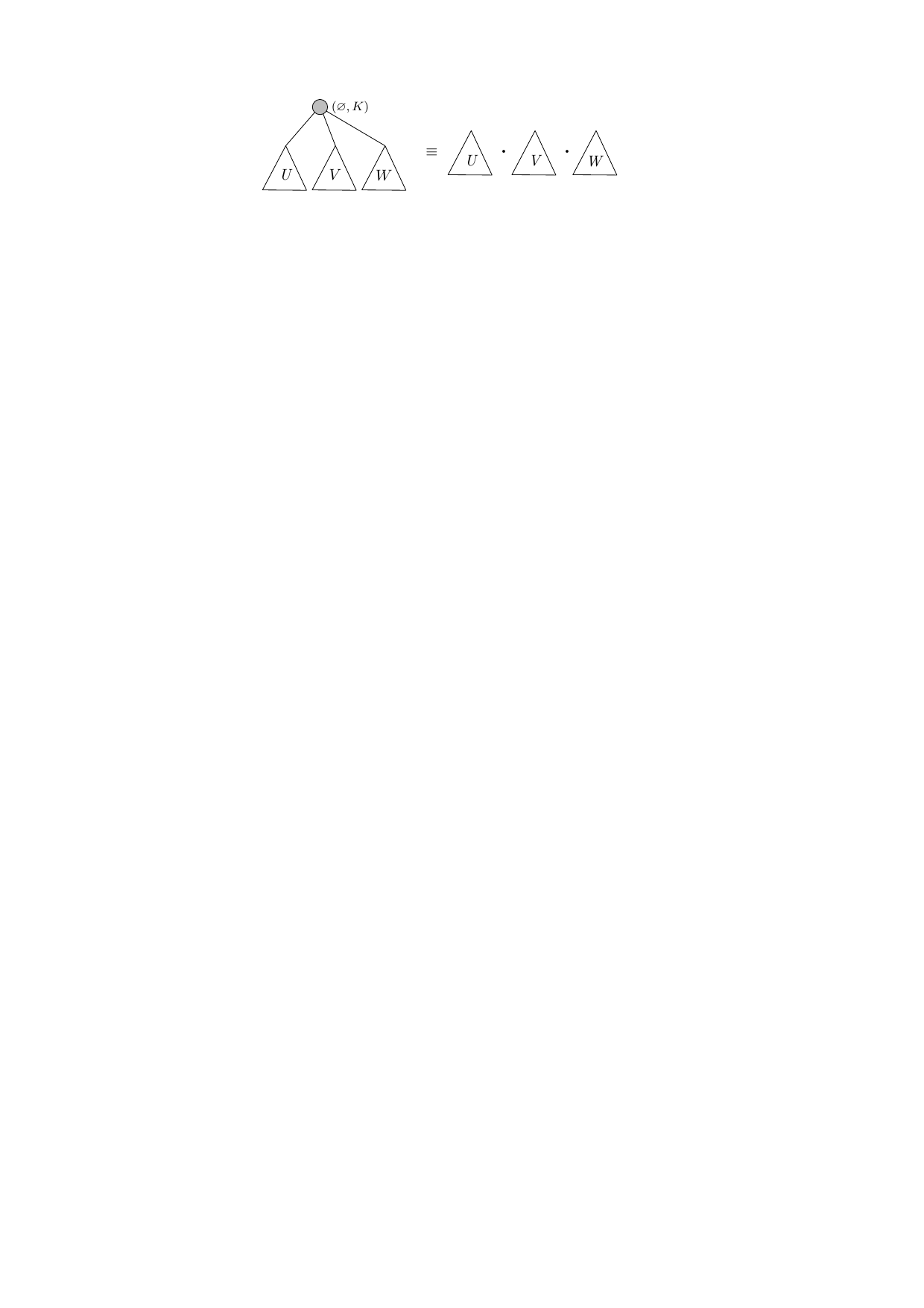} \end{array};$$
 $$\begin{array}{c} \includegraphics{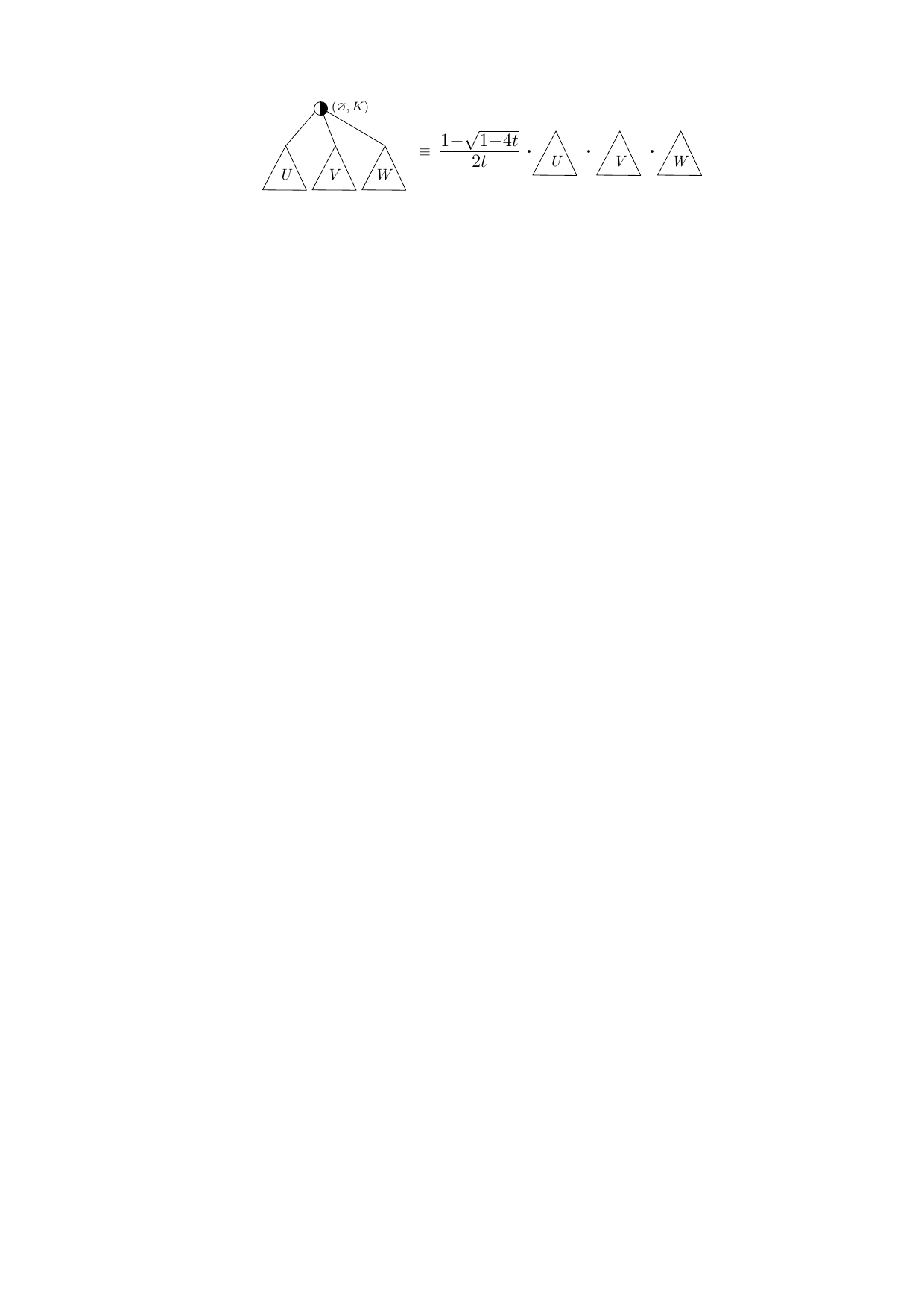} \end{array}.$$
\end{lemma}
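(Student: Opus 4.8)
The plan is to exploit the locality of the vertex conditions. By definition, the condition $(C^T_v)$ in \eqref{eq:Cv} only constrains the variables $\ell_w,m_b$ attached to \emph{descendants} of $v$: both its left-hand side and the quantity $\sum_{v' \le_T v} K_{v'}$ on its right range over vertices $v' \le_T v$. Let $c_1,\dots,c_k$ be the children of the root $\rho$ and let $T_1,\dots,T_k$ be the fringe subtrees rooted at them. For any $v \in T_i$, the descendants of $v$ in $T$ are exactly its descendants in $T_i$; hence $(C^T_v)$ coincides verbatim with the condition $(C^{T_i}_v)$ and involves only variables indexed by $T_i$. The only condition that could couple distinct subtrees is the root condition $(C^T_\rho)$, whose left-hand side ranges over \emph{all} non-gray vertices of $T$; but the hypothesis $\bowtie_\rho=\varnothing$ makes $(C^T_\rho)$ void, so no such coupling occurs.

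First I would treat the case where $\rho$ is gray ($\eps_\rho=0$). Then no variable is attached to $\rho$, the product of Catalan weights in \eqref{eq:def_ST_2} splits as a product over $i$ of the weights carried by the vertices of $T_i$, and likewise $\prod_v \One[C_v]$ splits as $\prod_{i=1}^k \prod_{v \in T_i} \One[C^{T_i}_v]$. Since the remaining summation ranges independently over the variables of each $T_i$, a Fubini-type interchange gives the factorization $S(T)=\prod_{i=1}^k S(T_i)$, which is one of the two formulas.

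Next I would treat the case where $\rho$ is white (the black case being symmetric), say $\eps_\rho=1$, so that a variable $\ell_\rho$ is attached to $\rho$. This variable enters the left-hand side of $(C^T_v)$ only when $\rho \le_T v$, i.e.\ only for $v=\rho$, since $\rho$ is its own unique ancestor; as $(C^T_\rho)$ is void, $\ell_\rho$ occurs solely in its own weight $\Cat_{\ell_\rho}t^{\ell_\rho}$. Summing it out produces the extra factor $\sum_{\ell \ge 0}\Cat_\ell t^\ell = \frac{1-\sqrt{1-4t}}{2t}$ (the generating series of the Catalan numbers, equal to $S(P_1^h)$), yielding $S(T)=\frac{1-\sqrt{1-4t}}{2t}\cdot\prod_{i=1}^k S(T_i)$, the second formula. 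Note that black and white roots give the same scalar factor, consistent with the multi-colored vertex convention.

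I do not anticipate a genuine obstacle: the whole argument is an unpacking of the definition \eqref{eq:def_ST_2} followed by a factorization of sums, valid coefficient-by-coefficient as an identity of formal power series (each coefficient of $t^n$ being a finite sum). The only point demanding care is the bookkeeping check that the right-hand sides $\sum_{v' \le_T v} K_{v'}$ are unchanged when passing from $T$ to $T_i$, which holds precisely because fringe subtrees are closed under taking descendants. This also covers the empty-product edge case of a one-vertex tree.
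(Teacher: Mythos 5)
Your proof is correct and follows essentially the same route as the paper's: observe that the root's variable (if any) appears in no condition $C_v$ once $\bowtie_\rho=\varnothing$ (since $\rho\le_T v$ forces $v=\rho$), so it can be summed out to produce the factor $\sum_{\ell\ge 0}\Cat_\ell t^\ell=\frac{1-\sqrt{1-4t}}{2t}$, while the remaining sum splits over the fringe subtrees rooted at the children of $\rho$. Your extra bookkeeping check that $(C^T_v)$ coincides with $(C^{T_i}_v)$ (because fringe subtrees are closed under taking descendants, so both the variable sums and $\sum_{v'\le_T v}K_{v'}$ are unchanged) is left implicit in the paper but is exactly what makes its factorization "obvious."
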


\begin{proof}
  Call $\rho$ the root vertex of $T$. 
  If it is decorated with the symbol $\varnothing$, 
  then the variable associated with $\rho$, if any, does not appear in any of the conditions $C_v$'s for $v \in T$.
Furthermore, if $\rho$ is gray, there is no variable associated with the root,
 and the factorization of $S(T)$ is obvious. 
 If $\rho$ is white, then one can factorize $S(T)$ as 
\begin{align*}
S(T)=\sum_{\ell_\rho \geq 0} \Cat_{\ell_\rho} t^{\ell_\rho} \cdot \prod_{T'} S(T'),
\end{align*}
where the product is taken on all fringe subtrees rooted at a child of $\rho$.
The result follows using the identity $\sum_{\ell_\rho \geq 0} \Cat_{\ell_\rho} t^{\ell_\rho} = \frac{1-\sqrt{1-4t}}{2t}$. The same holds if $\rho$ is black.
\end{proof}

\subsubsection*{Relations at generic vertices.}
The next two lemmas deal with equality and empty conditions on nonroot vertices and
nonroot internal vertices, respectively.
We start with a simple factorization identity.
The quadrangular shape of $U$ below indicates that the parent of $v$
may have some descendants in $U$.
\begin{lemma}
\label{lem:equalitylabel}
Given a nonroot vertex $v$ of a tree $T$ such that $\bowtie_v$ is the equality symbol, one has:
  $$\begin{array}{c} \includegraphics{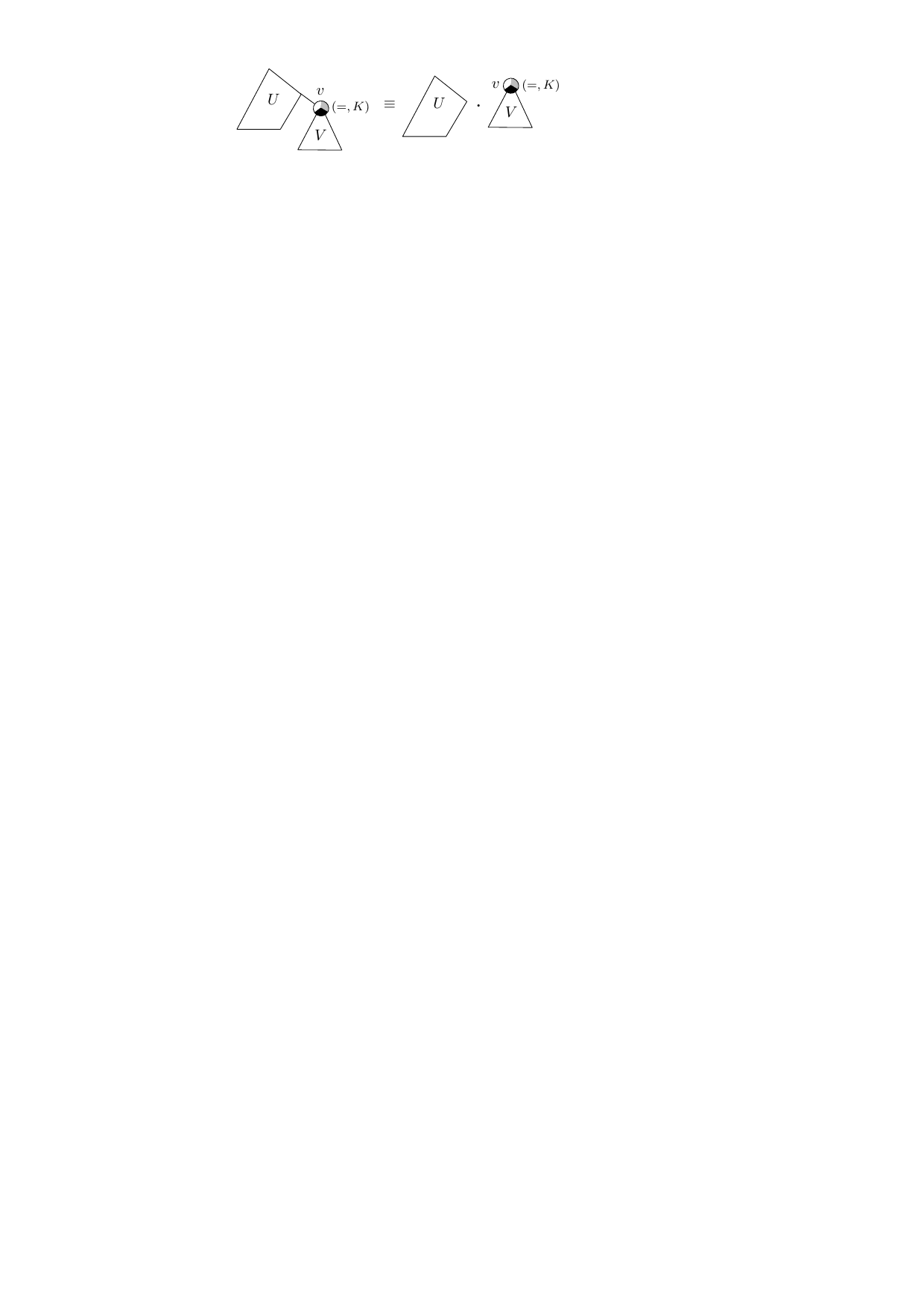} \end{array}.$$
\end{lemma}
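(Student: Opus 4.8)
The plan is to expand both sides of the claimed identity through the definition~\eqref{eq:def_ST_2} and to show that the equality condition $(C_v)$ decouples the fringe subtree rooted at $v$ from the rest of $T$. Write $T_v$ for the fringe subtree rooted at $v$ (keeping all decorations, so that $v$ is its root and still carries the equality symbol $(=,K_v)$), and write $T'$ for the tree obtained from $T$ by deleting $v$ together with all its descendants; in the picture, $T'$ is what remains attached to the parent of $v$ inside $U$. Both $T_v$ and $T'$ inherit their decorations from $T$, and since $v$ is a nonroot vertex, $T'$ still contains $\rho$ and is a genuine decorated tree. The white and black variables of $T$ split into those indexed by vertices of $T_v$ and those indexed by vertices of $T'$, and the monomial weight $\big(\prod_{w}\Cat_{\ell_w}t^{\ell_w}\big)\big(\prod_b \Cat_{m_b}t^{m_b}\big)$ in~\eqref{eq:def_ST_2} factors accordingly; so everything reduces to showing that, on the locus where $(C_v)$ holds, the indicator product $\prod_u \One[C^T_u]$ factors into one factor involving only the variables of $T_v$ and one involving only the variables of $T'$.

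First I would introduce, for any vertex $u$, the signed partial sum $\Sigma(u) \coloneqq \sum_{w \le_T u,\, \eps_w=1} \ell_w - \sum_{b \le_T u,\, \eps_b=-1} m_b$ and the offset $\kappa(u) \coloneqq \sum_{v' \le_T u} K_{v'}$, so that $(C^T_u)$ reads $\Sigma(u) \bowtie_u \kappa(u)$ and $(C_v)$ is the equality $\Sigma(v) = \kappa(v)$. The heart of the argument is the case of an ancestor $u$ of $v$: since every descendant of $v$ is a descendant of $u$, the set of descendants of $u$ splits as the vertices of $T_v$ together with the descendants of $u$ lying in $T'$, giving $\Sigma(u) = \Sigma(v) + \Sigma'(u)$ and $\kappa(u) = \kappa(v) + \kappa'(u)$, where primes denote the analogous quantities computed in $T'$. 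On the support of $(C_v)$ we may substitute $\Sigma(v) = \kappa(v)$, whereupon $(C^T_u)$ collapses to $\Sigma'(u) \bowtie_u \kappa'(u)$, which is exactly $(C^{T'}_u)$ and no longer involves any variable of $T_v$. For a vertex $u$ that is neither an ancestor nor a descendant of $v$, no descendant of $u$ can lie in $T_v$ (otherwise $u$ and $v$ would be comparable in the tree order), so $(C^T_u)$ is literally $(C^{T'}_u)$; and for $u$ strictly below $v$, the condition $(C^T_u)$ involves only variables of $T_v$ and is unchanged, while $(C^T_v)$ is precisely the root constraint of $T_v$. Assembling these cases shows that, on the locus $\{(C_v)\}$, the indicator product factors as claimed.

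Summing the two now-independent blocks of variables then yields $S(T) = S(T_v)\cdot S(T')$, which is the pictured identity. The main obstacle is purely bookkeeping: one must verify that the residual right-hand sides $\kappa'(u)$ for ancestors $u$ coincide with the offsets that the complementary tree $T'$ assigns to them under its inherited decorations, i.e.\ that pruning $T_v$ subtracts exactly $\kappa(v)$ from each ancestor's offset. This is the additivity $\kappa(u) = \kappa(v) + \kappa'(u)$ of the partial sums of the $K_{v'}$ along paths, and it is exactly the point at which the equality symbol at $v$ is indispensable: were $\bowtie_v$ an inequality, $\Sigma(v)$ could not be replaced by the constant $\kappa(v)$ and the fringe subtree would stay coupled to $U$.
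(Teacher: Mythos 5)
Your proof is correct and follows essentially the same route as the paper's: you split the vertices according to their position relative to $v$, observe that conditions at non-ancestors are untouched, and use the equality $(C_v)$ to cancel the fringe-subtree contribution (both the variable sum and the offset $\kappa(v)$) from each ancestor's condition, which is exactly the substitution the paper performs. Your $\Sigma/\kappa$ bookkeeping and the additivity relations merely make explicit what the paper's proof states in words, so there is nothing to correct.
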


\begin{proof}
  The product $S(U) \cdot S(V)$ in the right-hand side can be written
  as a large sum over variables associated with black/white vertices in $U$ and in $V$.
  Comparing with $S(T)$ (where $T$ is the tree on the left), the variables involved are the same on both sides.
  Hence, one only needs to check that the sets of conditions are equivalent.
  Notice if $v'$ is not a strict ancestor of $v$, then $C^T_{v'}= C^U_{v'}$ or $C^T_{v'}= C^V_{v'}$ (depending on where $v'$ lies).
  Now take $z$ a strict ancestor of $v$. The condition $C^T_z$ writes as
\begin{align*}
\left(\sum_{w \le_T z} \ell_w -  \sum_{b \le_T z} m_b \right) \bowtie_z \left( \sum_{v' \le_T z} K_{v'} \right)
\end{align*}
Note that $w \le_T z$ is realized either when $w$ is in $V$ (i.e.~$w \le_T v$) or when $w$ is in $U$ and $w \le_{U} z$.
Assuming that $C^T_v$ holds, that is,
\begin{align*}
\left(\sum_{w \le_T v} \ell_w -  \sum_{b \le_T v} m_b \right) = \left( \sum_{v' \le_T v} K_{v'} \right),
\end{align*}
the condition $C^T_z$ is equivalent to 
\begin{align*}
  \left(\sum_{w \le_{U} z} \ell_w -  \sum_{b \le_{U} z} m_b \right) \bowtie_z \left( \sum_{v' \le_{U} z} K_{v'} \right).
\end{align*}
The latter is nothing else than $C^U_z$.
This implies
\begin{align*}
\One[C^T_z] \cdot \One[C_v] = \One[C^U_z] \cdot \One[C_v].
\end{align*}
The result follows.
\end{proof}
 
\begin{lemma}
\label{lem:varnothinglabel}
Given a nonroot vertex $v$ such that $\bowtie_v$ is the empty symbol, one has:
  $$\begin{array}{c} \includegraphics{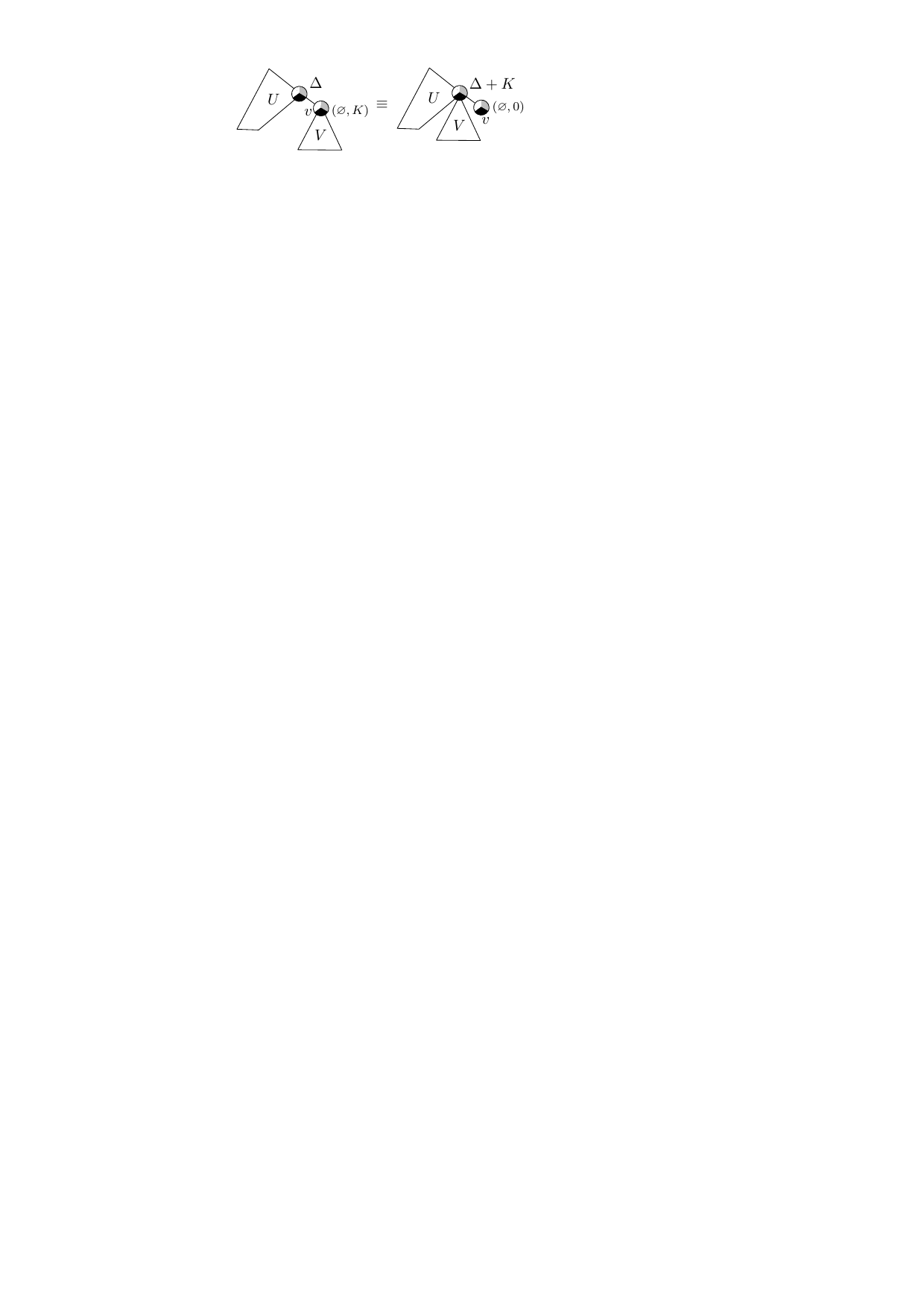} \end{array}.$$
When the parent of $v$ is gray, this simplifies further as follows:
  $$\begin{array}{c} \includegraphics{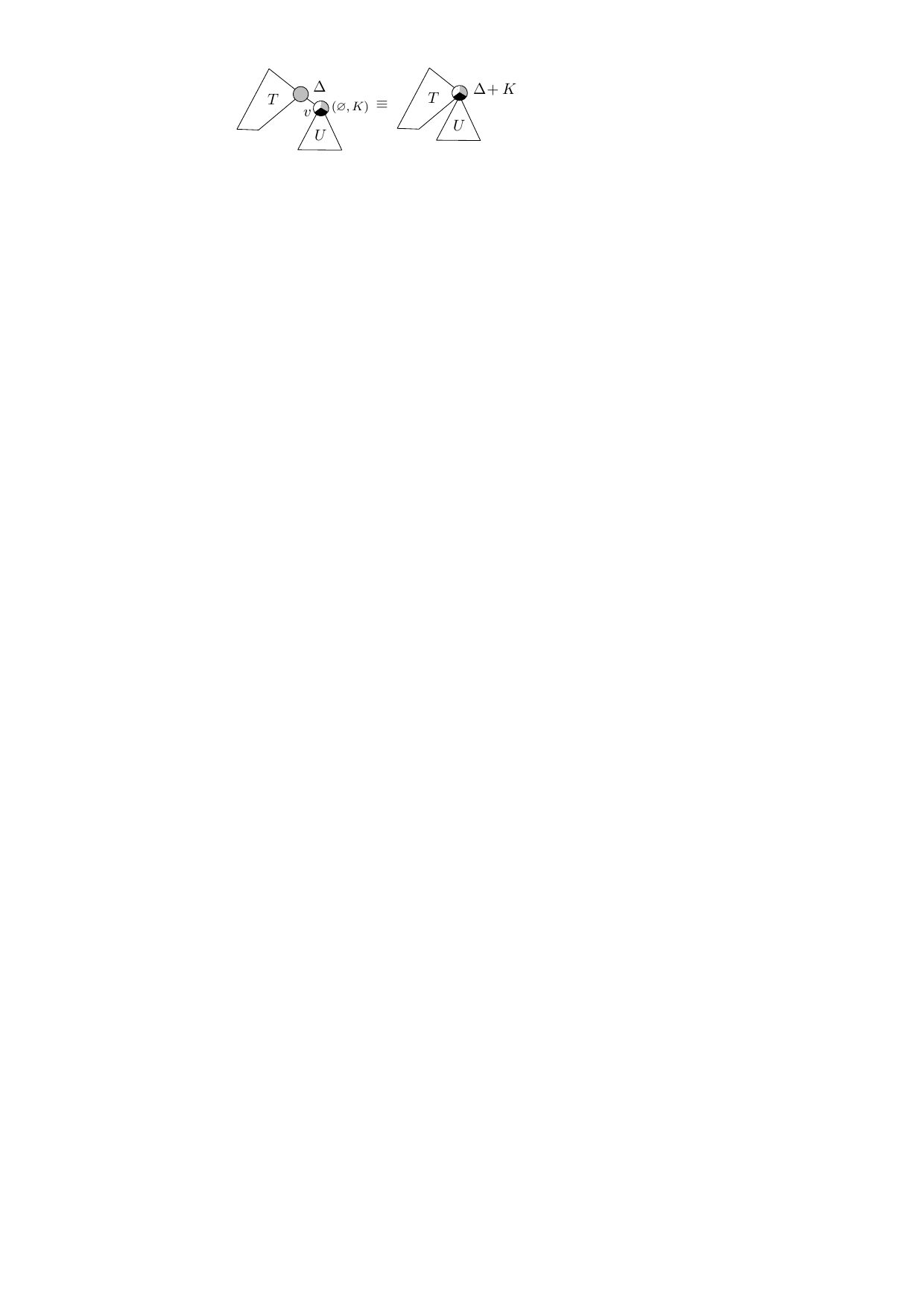} \end{array}.$$
\end{lemma}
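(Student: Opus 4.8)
The plan is to prove both displayed identities by the same device used in \cref{lem:equalitylabel}: since the tree surgeries involved neither create nor destroy white or black vertices, the two sides of each identity carry \emph{identical} families of summation variables, and hence identical weight products $\bigl(\prod_{w}\Cat_{\ell_w}t^{\ell_w}\bigr)\bigl(\prod_{b}\Cat_{m_b}t^{m_b}\bigr)$. The entire content is therefore to check that the two families of indicator conditions $\{\One[C_u]\}_u$ coincide, after which the two sums agree term by term.

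For the first identity I would write $p$ for the parent of $v$ and take the right-hand tree $T'$ to be obtained by detaching the fringe subtrees rooted at the children of $v$ and regrafting them directly onto $p$, so that $v$ survives as a pendant leaf of $p$ keeping its empty decoration. Because $\bowtie_v=\varnothing$, the condition $(C_v)$ is void on both sides and can be discarded; this is exactly what licenses the move. I would then sort the remaining vertices $u$ into three groups. (a) Any $u$ off the $v$-branch, or lying strictly inside one of the regrafted subtrees, has its descendant set untouched, so $(C_u)$ is literally unchanged. (b) Each child $c_i$ of $v$ retains precisely its own fringe subtree as its set of descendants after regrafting, so each $(C_{c_i})$ is unchanged. (c) The crux is an ancestor $z$ of $v$ (in particular $z=p$): here I would observe that the set $\{u'\le_T z\}$ of all descendants of $z$ is the same in $T$ and in $T'$, since the leaf $v$ is still a child of $p\le_T z$ and every regrafted subtree still lies below $p$, so nothing leaves or enters the cone below $z$. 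Consequently both the colored sum $\sum_{w\le_T z}\ell_w-\sum_{b\le_T z}m_b$ and the decoration sum $\sum_{v'\le_T z}K_{v'}$ appearing in $(C_z)$ are preserved, hence so is $(C_z)$. Combining (a)--(c) gives $\One[C_u^{T}]=\One[C_u^{T'}]$ for every $u$, proving $S(T)=S(T')$.

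The gray-parent refinement is the same computation carried out under the extra hypothesis $\eps_p=0$: a gray $p$ carries no summation variable of its own, so no factor $\Cat_{\ell_p}t^{\ell_p}$ or $\Cat_{m_p}t^{m_p}$ is present and no such term ever enters the conditions $(C_z)$ at $z=p$ or above, and it is this absence that lets the brought-up configuration collapse to the simpler picture displayed. I would verify the matching of conditions directly in this specialized configuration, exactly as in cases (a)--(c) above. I expect the only delicate point throughout to be case (c): one must resist the (false) worry that regrafting alters the ancestry seen by $z$, and instead note that the \emph{only} descendant set that genuinely changes is that of $v$ itself, which is harmless precisely because $(C_v)$ is void.
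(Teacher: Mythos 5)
Your proof method is exactly the paper's: for every vertex $u\neq v$, check that its descendant set and the sum $\sum_{v''\le_T u}K_{v''}$ are unchanged by the surgery, and discard the void condition $C_v$. The problem lies in the surgery you reconstructed, which is weaker than what the lemma asserts. As one can reconstruct from how the lemma is invoked elsewhere (\cref{prop:goodtrees}, step 2(e) of the algorithm in \cref{appendix:algo}, Case (b2) in the proof of \cref{thm:sum-pi-generalized}), the first picture does two things at once: besides regrafting the fringe subtrees of $v$'s children onto the parent $p$, it transfers the integer part of $v$'s decoration upwards, i.e.\ $v$'s decoration $(\varnothing,K)$ becomes $\varnothing_0$ while $p$'s decoration $\Delta$ becomes $\Delta+K$. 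Your version, in which ``$v$ survives as a pendant leaf keeping its empty decoration,'' is a true identity, but it cannot serve the purpose the lemma serves in the paper: in \cref{prop:goodtrees} and in step 2(e) of the algorithm, the lemma is applied to a \emph{leaf} decorated $(\varnothing,K)$ with $K\neq 0$ (so there is nothing to regraft) precisely in order to turn it into a leaf decorated $\varnothing_0$; your surgery is the identity map in that case. The repair is immediate with your own argument: moving $K_v$ from $v$ to $p$ changes $\sum_{v'\le_T z}K_{v'}$ for no vertex $z\neq v$, since for $z\geq_T p$ both $v$ and $p$ lie in the cone below $z$, and for every other $z$ neither does.

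The gray-parent picture is likewise more than ``the same computation under $\eps_p=0$'': it \emph{merges} $v$ into $p$. The right-hand tree has one vertex fewer; the gray parent is replaced by a vertex of $v$'s colour carrying $v$'s summation variable, decorated with $p$'s comparison symbol and shifted constant $\Delta+K$, whose children are $v$'s former children together with $p$'s other children. This is what makes the output tree strictly smaller (used for item (vi) in \cref{prop:goodtrees}), and, read right to left, what inserts a gray vertex in Case (b2) and in the long-star reductions of \cref{sec:long_stars}. Consequently your opening premise --- that both sides carry \emph{identical} families of summation variables because no white or black vertex is created or destroyed --- fails literally here: one needs the bijection sending $v$'s variable to the merged vertex's variable, after which the weight products agree and the condition at the merged vertex (whose descendant cone now contains its own variable) coincides with the condition at the gray $p$ on the left, whose cone already contained $\ell_v$ (or $m_v$). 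So: right method, same as the paper's, but your reconstruction of both pictures misses the decoration transfer and the vertex merge, which are the actual content of the lemma; both are covered by the very argument you give, once the surgeries are stated correctly.
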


\begin{proof}
  As above, we simply check that the conditions involved in the sums on both sides are equivalent.
In the first part, all vertices $v'$ except $v$ have the same set of descendants in both trees
and the same sum $\sum_{v'' \le_T v'} K_{v''}$,
so conditions $C_{v'}$, for $v' \ne v$, are the same on both sides.
Furthermore, $v$ is decorated with $\varnothing$, so the condition $C_v$ trivially holds on both sides.
The first part of the lemma follows.
The second part is proved similarly.
\end{proof}

The third identity allows to change the value of $K$ for vertices decorated with inequality symbols.
\begin{lemma}\label{lem:shift_k}
  Given a nonroot vertex with decoration $(\ge,K)$ or $(\le,K)$, one has the following identity: 
  \begin{align*}&\begin{array}{c} \includegraphics{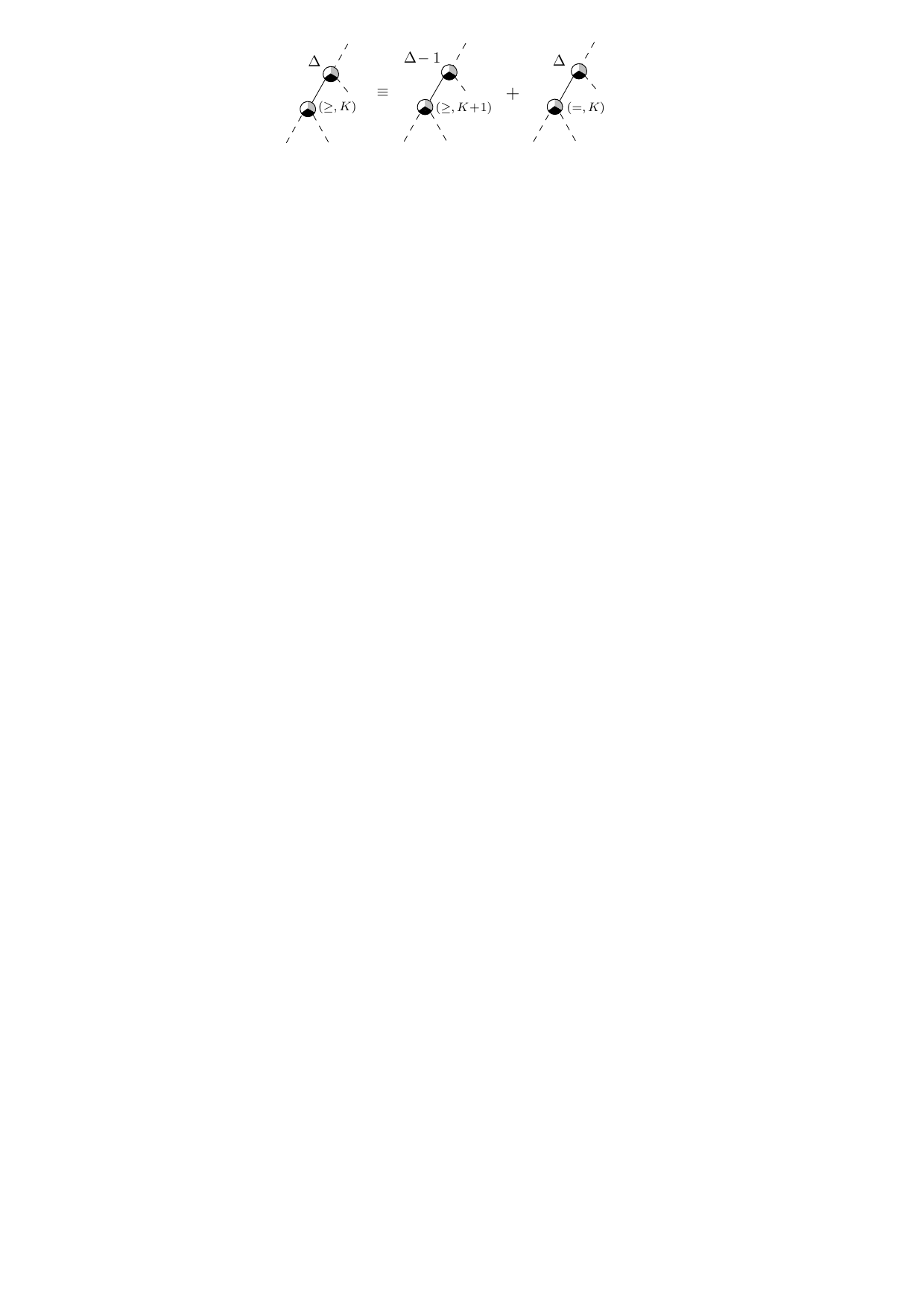} \end{array}\\
  \text{ or }&\begin{array}{c} \includegraphics{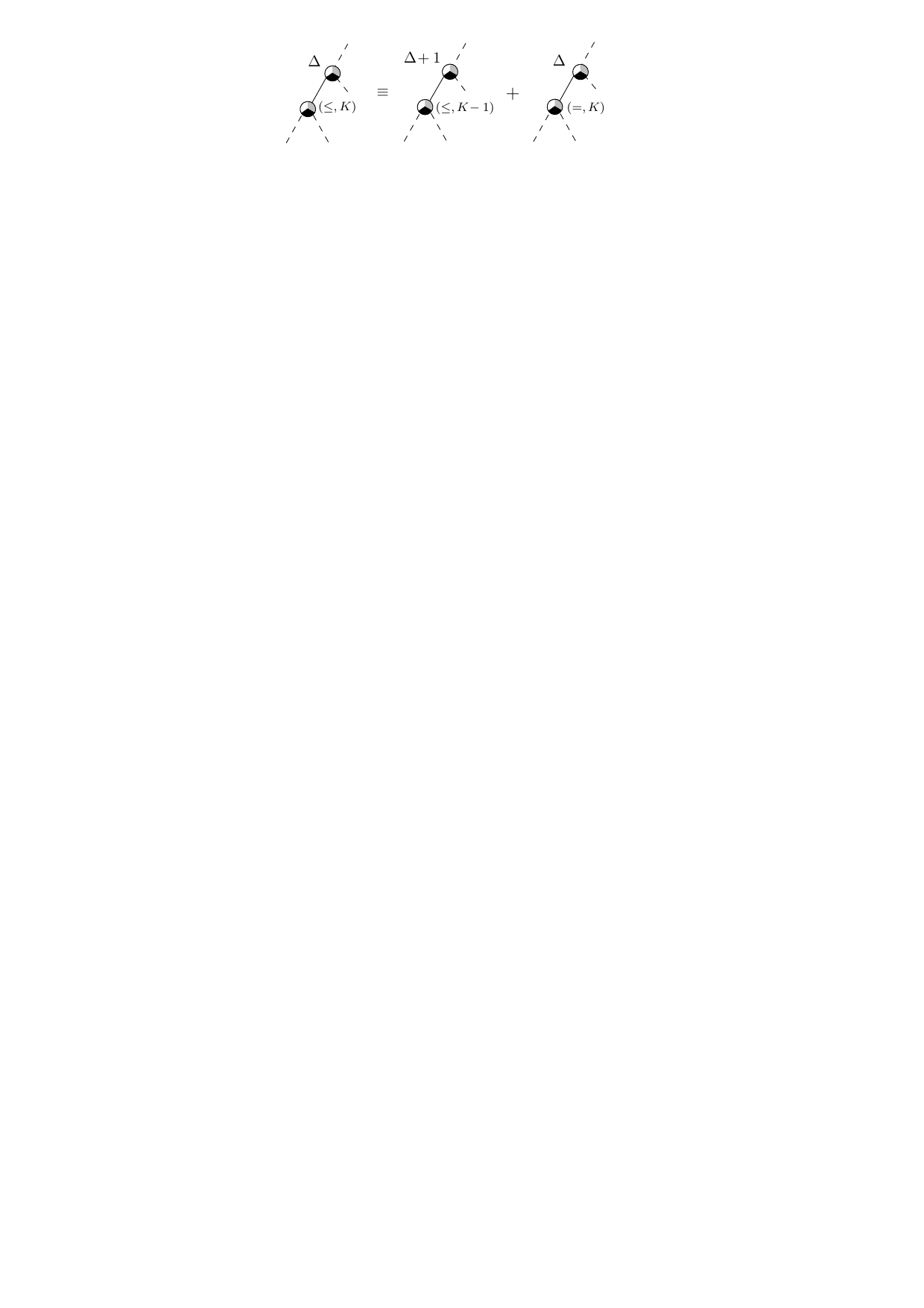} \end{array}.\end{align*}
  This can be also applied at the root vertex disregarding the modification of the decoration
  of the parent vertex.
\end{lemma}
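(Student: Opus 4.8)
The plan is to prove both identities in the same spirit as \cref{lem:equalitylabel,lem:varnothinglabel}: the sums on the two sides range over exactly the same variables $(\ell_w)$ and $(m_b)$, so it suffices to check that the corresponding products of indicator functions coincide. For a vertex $v$ it is convenient to abbreviate the condition $(C^T_v)$ by writing $A_v \coloneqq \sum_{w \le_T v} \ell_w - \sum_{b \le_T v} m_b$ for its left-hand side, which depends only on the summation variables and not on the decorations, and $B_v \coloneqq \sum_{v' \le_T v} K_{v'}$ for its right-hand side, which depends only on the decorations $K_{v'}$. Treating first a vertex $v$ carrying the decoration $(\ge,K)$, the condition $(C^T_v)$ reads $A_v \ge B_v$, and the combinatorial heart of the identity is the elementary splitting
\begin{equation*}
\One[A_v \ge B_v] = \One[A_v \ge B_v + 1] + \One[A_v = B_v],
\end{equation*}
which is the tree-indexed counterpart of the two-vertex relation $S_{\ge,K} = S_{\ge,K+1} + S_{=,K}$ underlying \cref{prop:basecase}.

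It then remains to realize each term on the right-hand side as the sum $S$ of a decorated tree. The second term corresponds to replacing $\bowtie_v = \,\ge$ by $\bowtie_v = \,=$ while leaving all other decorations untouched, giving the tree with $v$ decorated by $(=,K)$. For the first term I must increase $B_v$ by one without perturbing any other condition, and this is the step I expect to require the most care. Raising $K_v$ from $K$ to $K+1$ increases not only $B_v$ but also $B_z$ for \emph{every} ancestor $z$ of $v$, since $K_v$ enters $(C^T_z)$ for all such $z$. To cancel these unwanted shifts I would simultaneously lower $K_u$ by one at the parent $u$ of $v$. A short bookkeeping of which descendant-sums contain $K_v$ and which contain $K_u$ then shows that $B_v$ increases by one, whereas $B_u$ and every $B_z$ with $z$ a strict ancestor of $u$ are left unchanged (the $+1$ coming from $K_v$ and the $-1$ coming from $K_u$ cancel), and that the conditions at the siblings and descendants of $v$ never involved $K_v$ or $K_u$ at all. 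Hence the first term equals the sum $S$ of the tree in which $v$ is decorated $(\ge,K+1)$ and the second coordinate of its parent's decoration is decremented by one, which is precisely the claimed identity.

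The decoration $(\le,K)$ is handled symmetrically, starting from $\One[A_v \le B_v] = \One[A_v \le B_v - 1] + \One[A_v = B_v]$ and compensating by \emph{incrementing} $K_u$ at the parent. Finally, when $v$ is the root $\rho$ there are no ancestor conditions to perturb, so changing $K_\rho$ affects only $(C^T_\rho)$; the same splitting then applies directly with no parent decoration to modify, which is exactly the stated extension to the root vertex.
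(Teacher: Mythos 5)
Your proposal is correct and takes essentially the same route as the paper's proof: the identical splitting $\One[A_v \ge B_v] = \One[A_v \ge B_v + 1] + \One[A_v = B_v]$, with the second term realized by changing $\bowtie_v$ to an equality, and the first by incrementing $K_v$ while decrementing the parent's constant so that the sum $\sum_{v' \le_T z} K_{v'}$ is unchanged for every ancestor $z$ (and untouched at non-ancestors). Your explicit bookkeeping of which conditions contain $K_v$ and $K_u$, and your remark on the root case, merely spell out what the paper's proof summarizes as ``clear by construction.''
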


\begin{proof}
We only prove the first one ; the second one can be proved the same way. Let $v$ be a nonroot vertex with decoration $(\geq,K)$. We use the fact that $\One[C_v]=\One[C'_v]+\One[C''_v]$, where:
\begin{align*}
 \tag{$C'_v$} \left(\sum_{w \le_{T} v} \ell_w -  \sum_{b \le_{T} v} m_b \right) \geq \left( \sum_{v' \le_{T} v, v' \neq v} K_{v'} + K + 1\right),
\end{align*} and

\begin{align*}
 \tag{$C''_v$} \left(\sum_{w \le_{T} v} \ell_w -  \sum_{b \le_{T} v} m_b \right) = \left( \sum_{v' \le_{T} v} K_{v'} \right).
\end{align*}
What is left to check is that the other conditions $C_{v'}$ for $v' \neq v$ are the same in the three trees. This is clear if $v'$ is not an ancestor of $v$. Now, take $z$ an ancestor of $v$. Since the set of descendants of $z$ in all three trees is the same, we only need to check that $\sum_{v' \leq z} K_{v'}$ is the same in all three trees. This is clear by construction. 
\end{proof}

The next formula will be used to reverse some inequalities.
\begin{lemma}\label{lem:reverting}
  $\begin{array}{c} \includegraphics{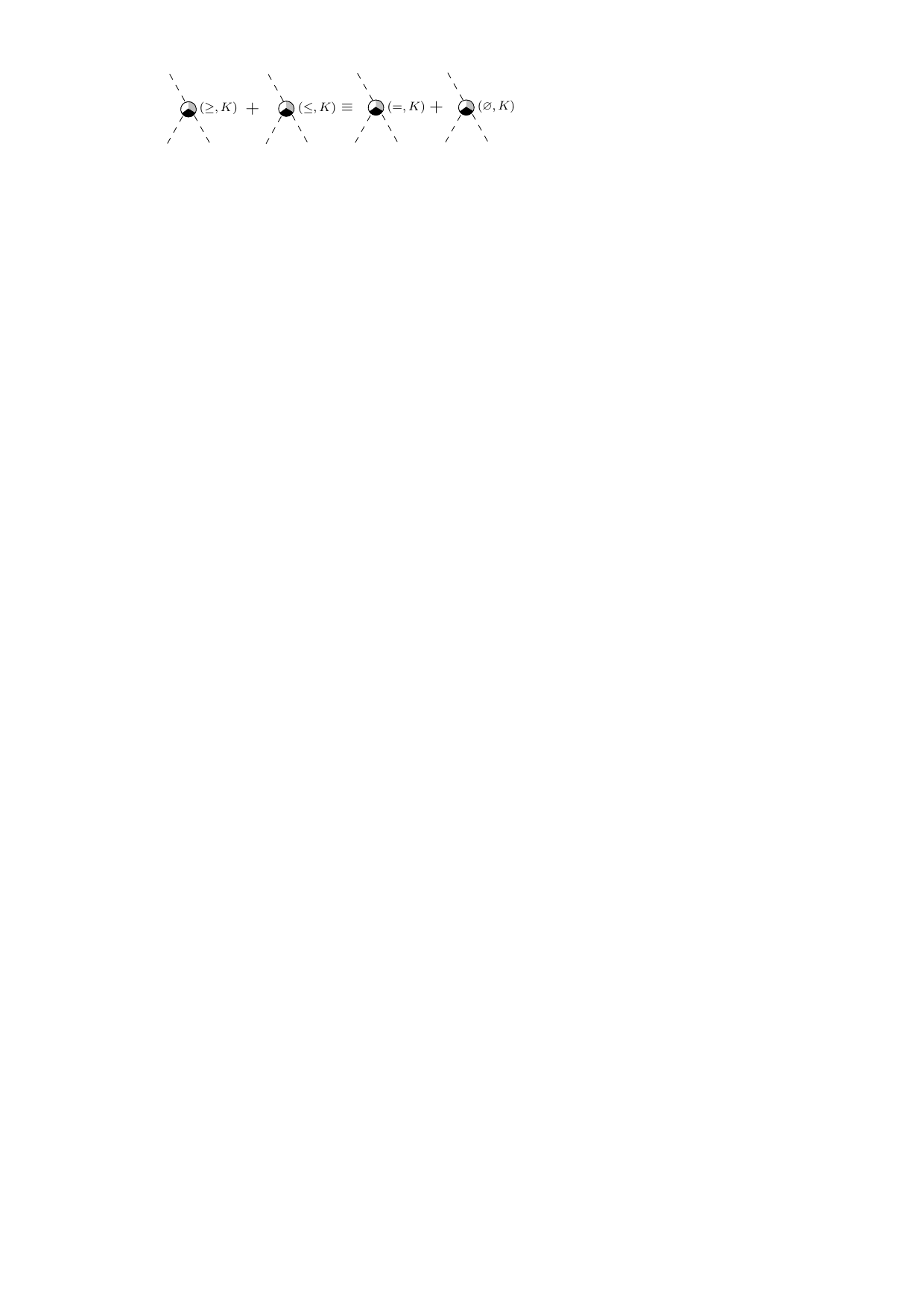} \end{array}$.   
\end{lemma}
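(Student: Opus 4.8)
The plan is to prove this identity termwise at the level of the indicator functions appearing in~\eqref{eq:def_ST_2}, exactly as was done for the single relation~\eqref{eq:reversing_inequality_example} in the motivating example of \cref{ssec:ex_P2h}. The identity asserted by the figure is the tree-level avatar of the elementary pointwise relation $\One[\ge]+\One[\le]=1+\One[=]$ applied at one distinguished vertex $v$: all four forests on the two sides share the same underlying tree $T$, the same vertex colours $(\eps_{v'})$, and the same integers $(K_{v'})$, and differ only in the comparison symbol $\bowtie_v$ attached to $v$, which ranges over $\ge$, $\le$, $\varnothing$ and $=$ (with the same $K_v$ throughout). Consequently they are all sums over the same set of tuples $((\ell_w),(m_b))$, weighted by the identical factor $\prod_{w}\Cat_{\ell_w}t^{\ell_w}\prod_{b}\Cat_{m_b}t^{m_b}$.

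First I would fix such a tuple and abbreviate $A \coloneqq \sum_{w \le_T v} \ell_w - \sum_{b \le_T v} m_b$ and $B \coloneqq \sum_{v' \le_T v} K_{v'}$, so that the condition $(C_v)$ reads $A \bowtie_v B$. The pointwise identity
\[
\One[A \ge B] + \One[A \le B] = 1 + \One[A = B],
\]
valid for all integers $A,B$ (it is~\eqref{eq:reversing_inequality_example} with $\ell=A$, $m=B$), is precisely the relation between the four indicators $\One[C_v]$ corresponding to the four decorations of $v$; here the summand $1$ is the always-satisfied condition attached to $\bowtie_v=\varnothing$, and the boundary case $A=B$ is counted twice on the left (once by $\ge$, once by $\le$) and compensated on the right by the $\One[A=B]$ term sitting beside the void condition.

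Next I would observe that for every vertex $v' \ne v$ the condition $C^T_{v'}$ is literally the same function of the tuple in all four forests, since changing $\bowtie_v$ alters neither the set of descendants of $v'$, nor the colours, nor the partial sums $\sum_{v'' \le_T v'} K_{v''}$. Multiplying the displayed pointwise identity by the common weight $\prod_{v' \ne v}\One[C_{v'}]$ and by the common Catalan weights, then summing over all tuples, therefore yields $S(T^{\ge}) + S(T^{\le}) = S(T^{\varnothing}) + S(T^{=})$, which is the claimed relation $T^{\ge}+T^{\le}\equiv T^{\varnothing}+T^{=}$.

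The only point requiring a word of justification is that all four series converge absolutely, so the termwise rearrangement above is legitimate; this follows from the domination bound recorded in the introduction, which bounds each of these sums by $\big(\sum_{n\ge 0}\Cat_n t^n\big)^{|V(T)|}$, finite for $t \le 1/4$. I do not expect any substantive obstacle here: once the bookkeeping of which conditions coincide across the four forests is in place, the statement reduces to the one-line integer identity above, and the genuinely new content is only the correct accounting of the boundary case $A=B$.
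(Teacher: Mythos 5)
Your proof is correct and takes essentially the same route as the paper's: the paper likewise observes that the conditions $C_{v'}$ for $v' \neq v$ are unchanged across the four decorated trees and then invokes the pointwise identity $\One[a \geq b] + \One[a \leq b] = 1 + \One[a = b]$. Your additional remarks on the formal bookkeeping and absolute convergence are fine but not needed, since the identity can be read termwise at the level of formal power series.
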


\begin{proof}
Let $v$ be the vertex considered in the transformation. Observe that none of the conditions $C_{v'}$ for $v' \neq v$ is altered by this transformation. The result then follows from the fact that, for all $a,b \in \Z$, we have
\[
\One[a \geq b] + \One[a \leq b] = 1 + \One[a = b].\qedhere
\]
\end{proof}

\subsubsection*{Relation at leaves}
We now consider simplifications related to leaves. 
\begin{lemma}\label{lem:leaf}
The following relations hold:
 $$\begin{array}{c} \includegraphics{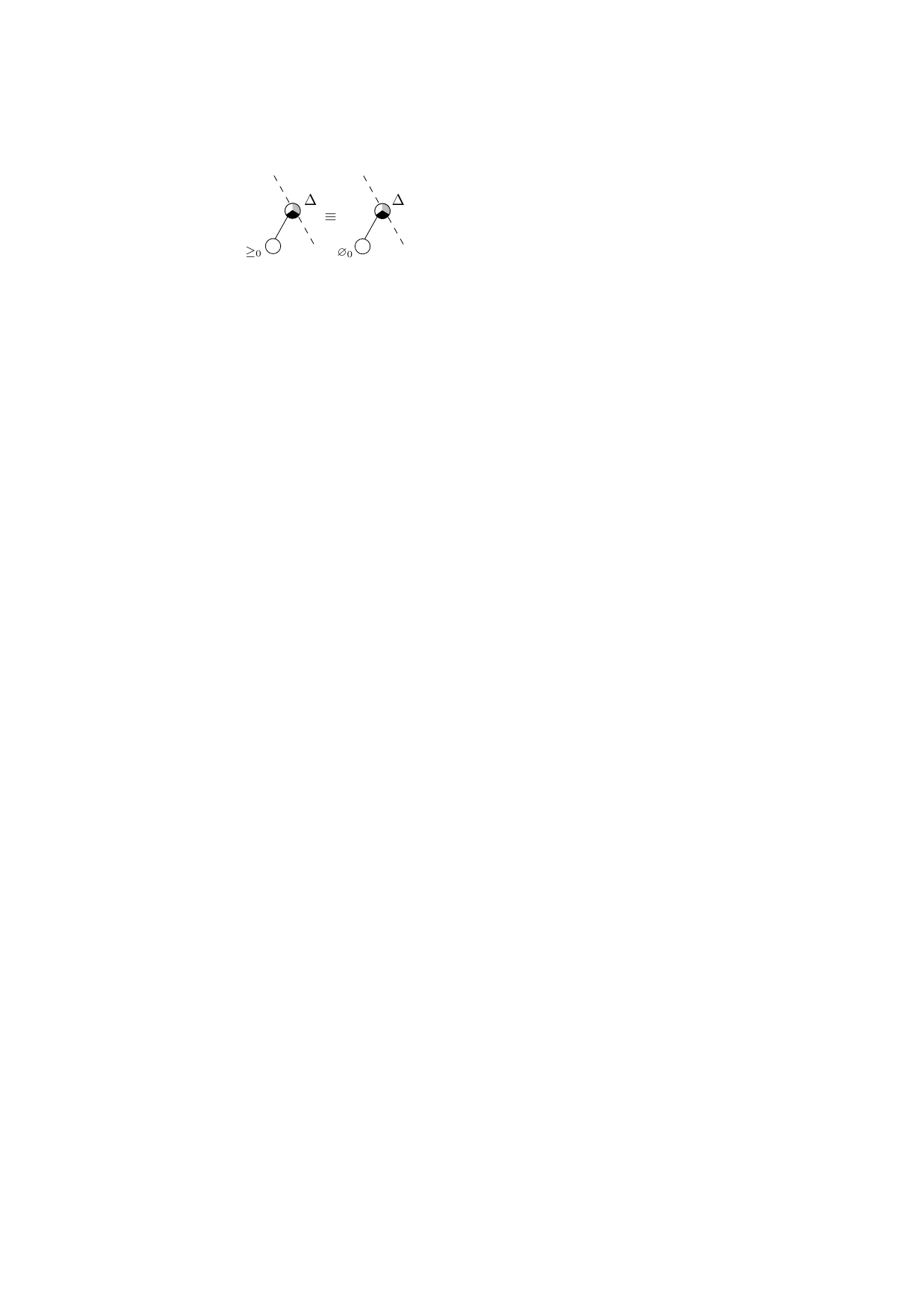} \end{array},\quad  \begin{array}{c} \includegraphics{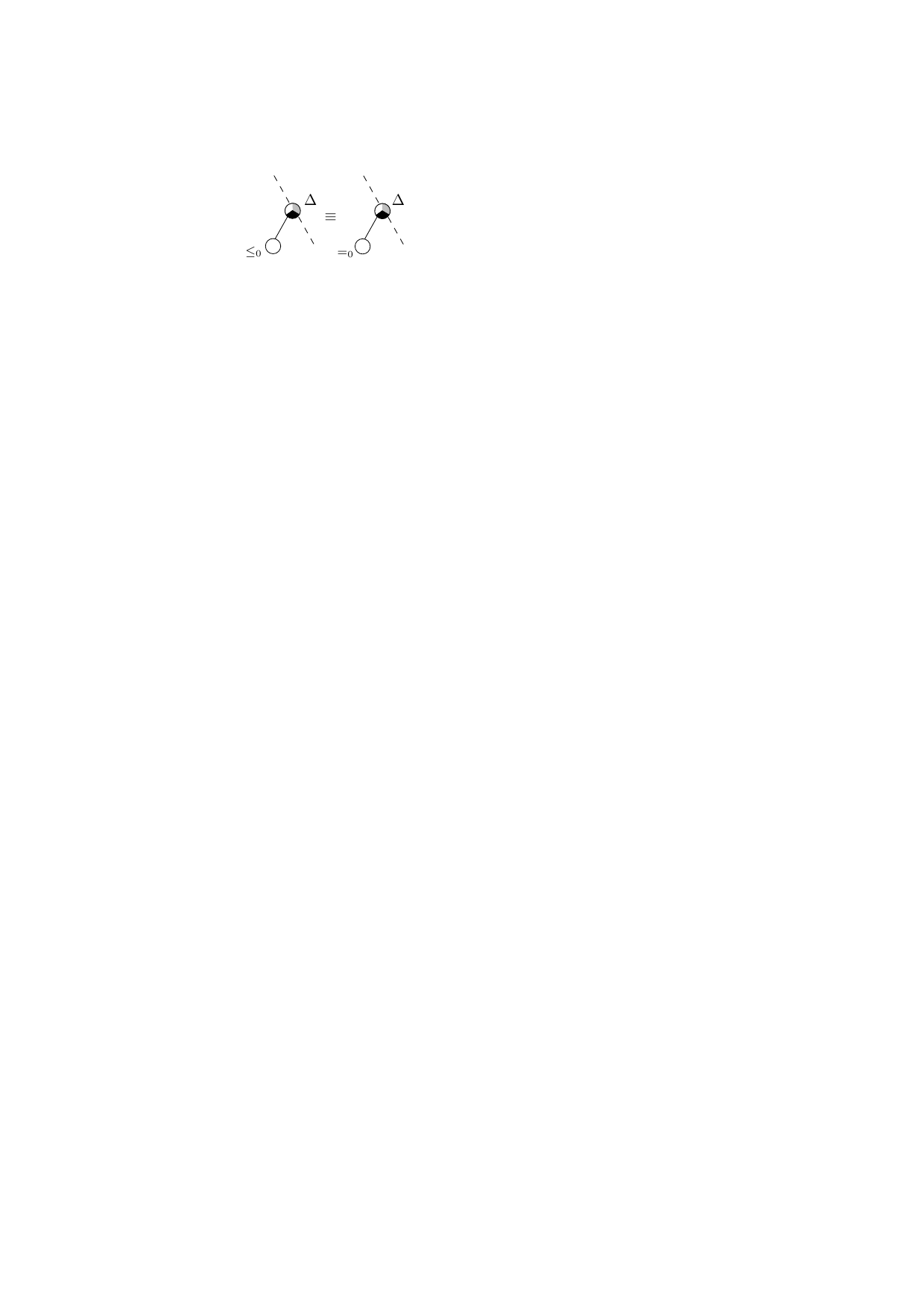} \end{array},\quad \begin{array}{c} \includegraphics{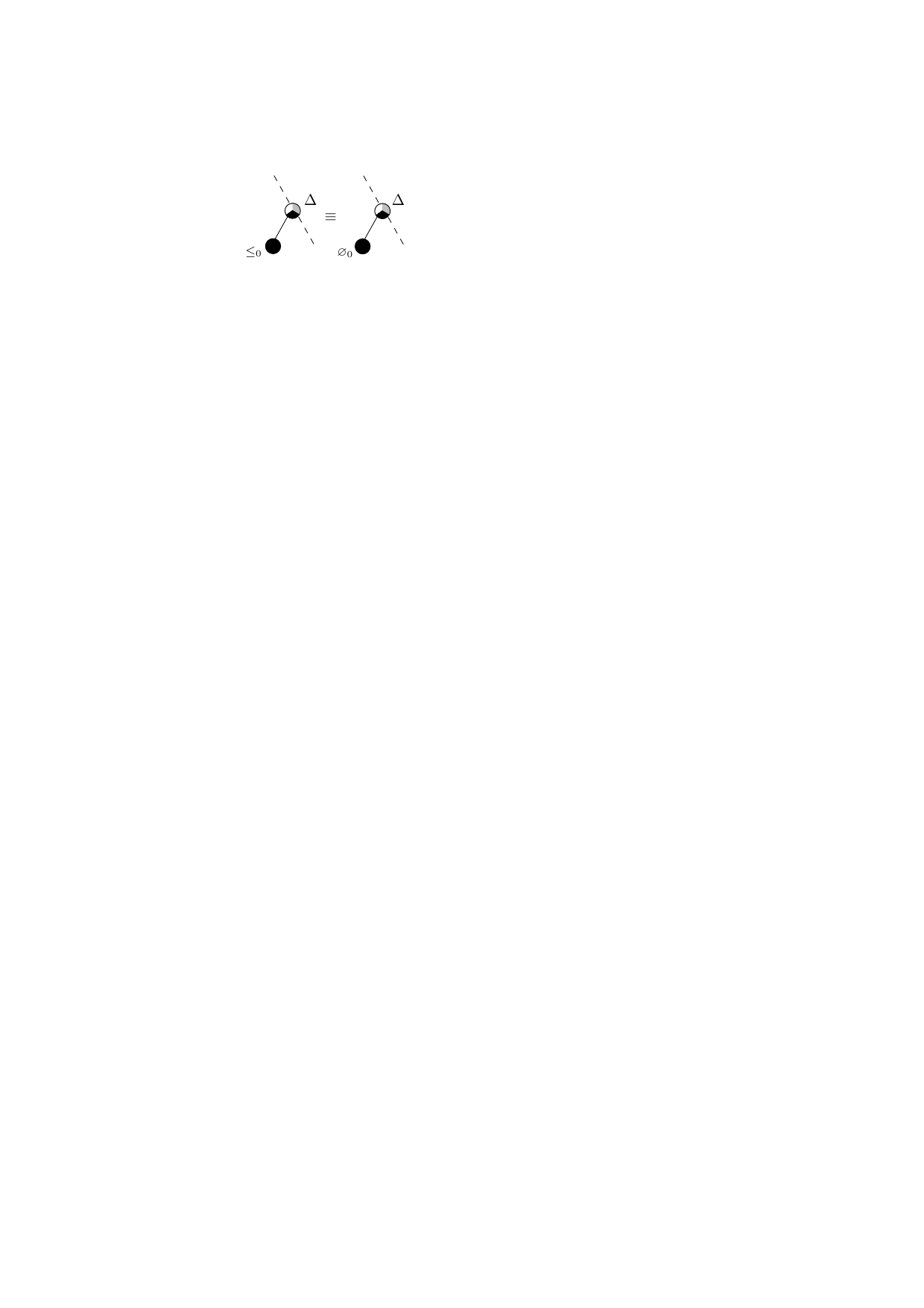} \end{array},$$
 $$\begin{array}{c} \includegraphics{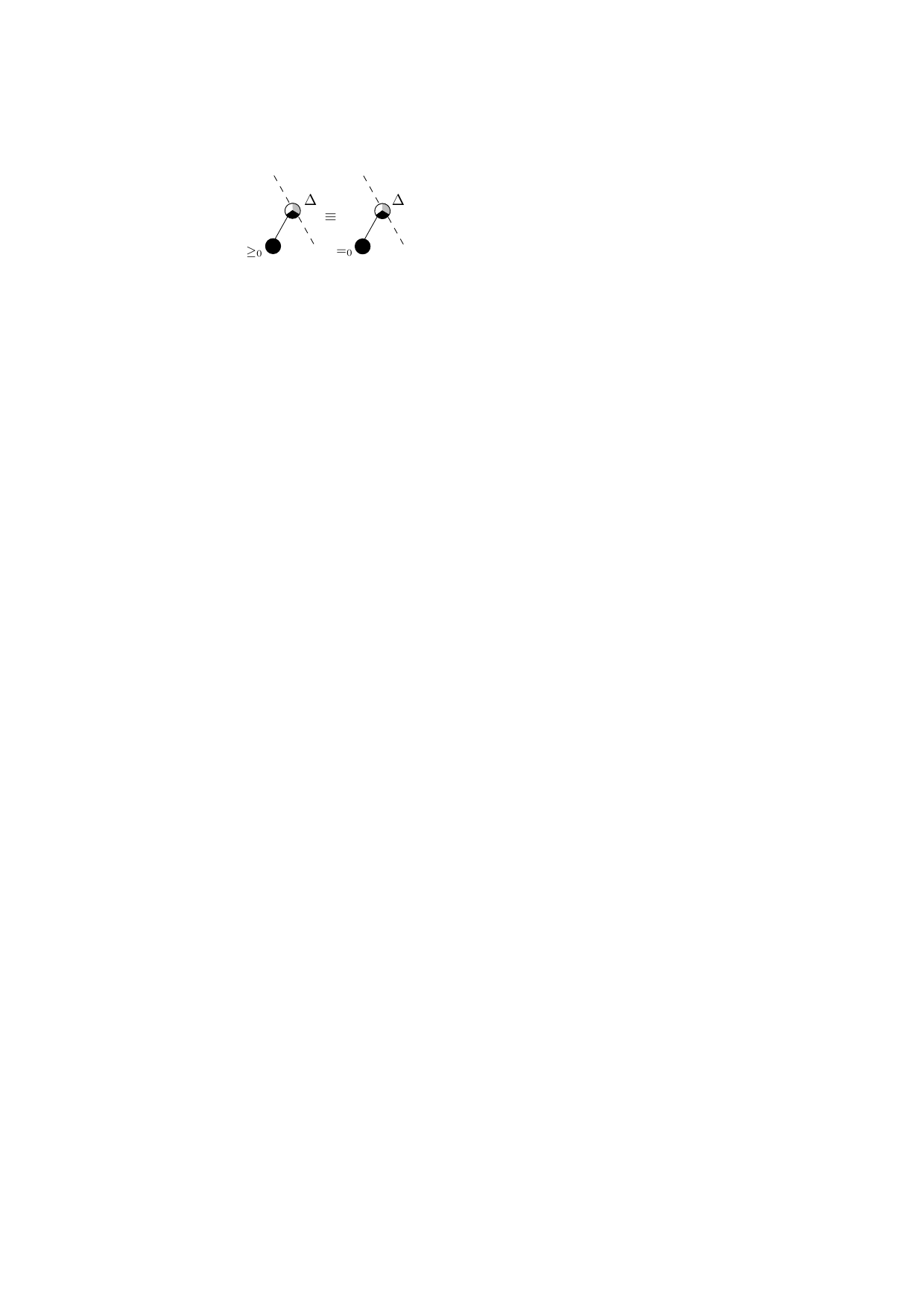} \end{array}, \quad \begin{array}{c} \includegraphics{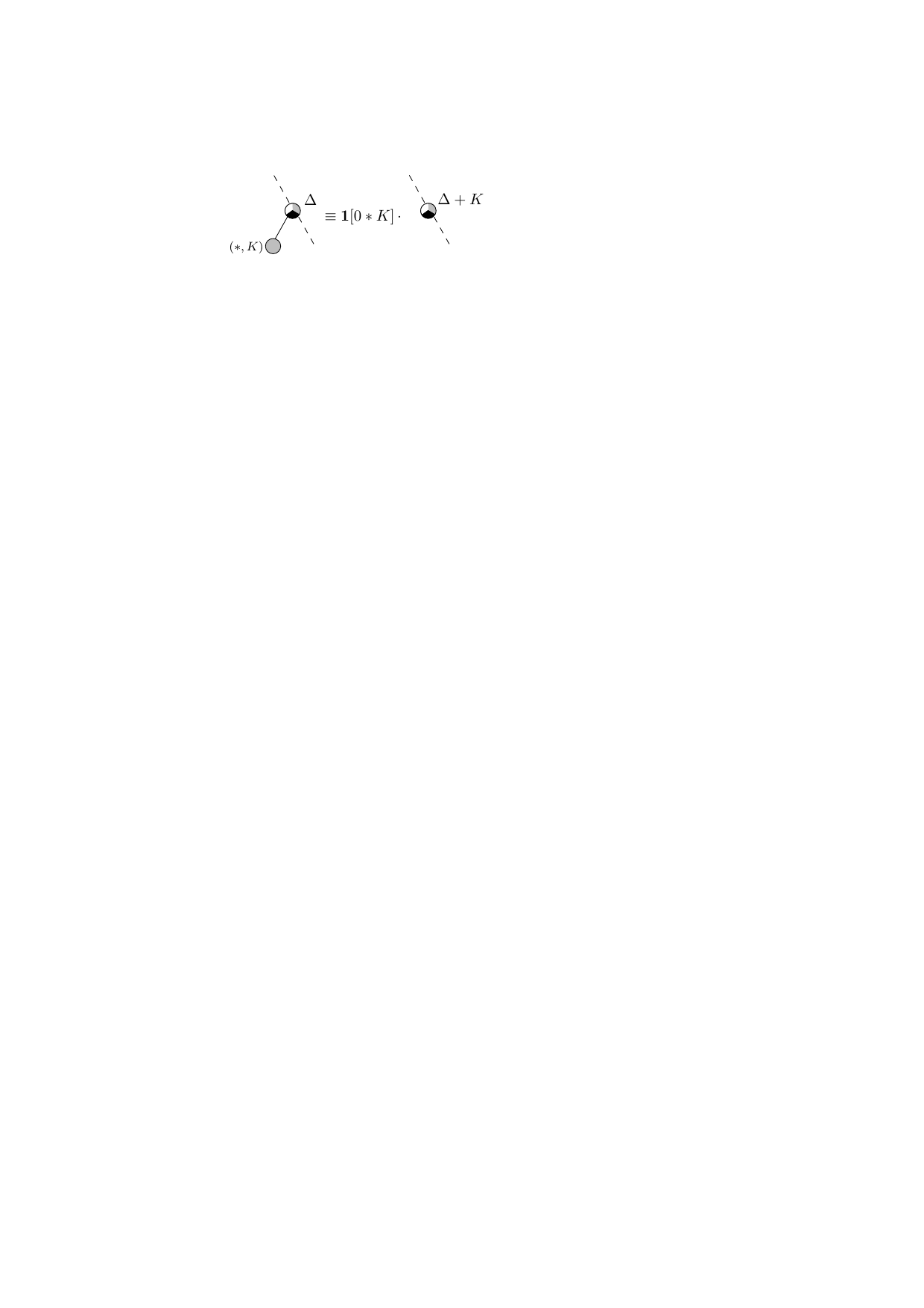} \end{array}.$$
 In the last equation, $\ast$ is an arbitrary element in $\{=,\le,\ge,\varnothing\}$
and $0 \ast K$ is the corresponding condition
 ($0=K$, $0 \le K$, $0 \ge K$ or $\mathrm{True}$).
  \end{lemma}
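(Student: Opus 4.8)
The plan is to establish all five identities by the same verification scheme already used for \cref{lem:equalitylabel,lem:varnothinglabel,lem:shift_k,lem:reverting}: expand $S$ on each side according to~\eqref{eq:def_ST_2} as a sum over the white and black vertex variables, and check that the two expansions agree term by term. The feature that makes leaves special, and that I would exploit throughout, is that a leaf $v$ is its own unique descendant. Consequently the condition~\eqref{eq:Cv} attached to $v$ constrains only the variable carried by $v$: it reads $\ell_v \bowtie_v K_v$ for a white leaf, $-m_v \bowtie_v K_v$ for a black leaf, and $0\bowtie_v K_v$ for a gray leaf. Moreover the leaf variable enters the condition $(C_z)$ of an ancestor $z>_T v$ only through the single summand $+\ell_v$ (white) or $-m_v$ (black), while the constant side $\sum_{v'\le_T z}K_{v'}$ of $(C_z)$ contains the summand $K_v$.

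I would treat the gray-leaf identity (the last one) first. Here $v$ carries no variable, so $(C_v)$ is the variable-free statement $0\ast K$, whose truth value is constant; if it is false then $\One[C_v]\equiv 0$ and both sides vanish. If it is true, deleting $v$ leaves the white and black sums of every ancestor untouched but removes the summand $K_v=K$ from each ancestor $K$-sum $\sum_{v'\le_T z}K_{v'}$; this loss is compensated by incrementing the second coordinate of the parent's decoration by $K$, which is precisely the $\Delta+K$ of our conventions. Pulling the constant $\One[0\ast K]$ out of the sum then yields the stated identity, with $0\ast K$ evaluating to $0=K$, $0\le K$, $0\ge K$ or $\mathrm{True}$ according to $\ast$.

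For a white or black leaf the conditions attached to vertices that are neither $v$ nor ancestors of $v$ are visibly unchanged, so it suffices to compare the Catalan factor contributed by $v$ together with the ancestor conditions $(C_z)_{z>_T v}$. When $\bowtie_v$ is the equality symbol the leaf variable is pinned to the value dictated by $K_v$, so the relevant identity is the specialization of \cref{lem:equalitylabel} to the case where the fringe subtree rooted at $v$ is the single vertex $v$; there $S$ of that subtree is the explicit scalar $\Cat_{K}\,t^{K}$ (for $K\ge 0$, and the sum vanishes when $K<0$), and deleting $v$ removes $K_v$ from the ancestor $K$-sums in exactly the way needed to match the substitution of the pinned value. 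When the leaf condition is instead automatically satisfied (for instance $\ell_v\ge K$ with $K\le 0$, or $-m_v\le K$ with $K\ge 0$, since $\ell_v,m_v\ge 0$) it may simply be dropped, turning the leaf into one decorated with $\varnothing$; and where two opposite inequalities at the leaf must be merged, one uses the identity $\One[a\ge b]+\One[a\le b]=1+\One[a=b]$ of \cref{lem:reverting}. The scalar prefactors appearing in each case are supplied by the single-vertex evaluations already recorded in the proof of \cref{prop:basecase}, together with $\sum_{n\ge 0}\Cat_n t^n=\tfrac{1-\sqrt{1-4t}}{2t}$.

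The main obstacle is purely one of bookkeeping: for each of the five pictures one must verify that the displacement induced in the ancestor sums $\sum_{w\le_T z}\ell_w-\sum_{b\le_T z}m_b$ and $\sum_{v'\le_T z}K_{v'}$ by deleting the leaf (or by pinning its variable) is correctly transferred to, or cancelled against, the decoration $\Delta\mapsto\Delta\pm K$ of the parent, with the sign governed by the color of the leaf and by whether the leaf contributes to the white/black side or only to the $K$-side of $(C_z)$. Once this correspondence is checked vertex by vertex, every ancestor condition on the left matches its counterpart on the right, and each identity reduces to the term-by-term equality of two explicit sums.
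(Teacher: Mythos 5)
Your overall scheme is the same as the paper's: expand both sides via \eqref{eq:def_ST_2} and use the fact that a leaf is its own only descendant, so its condition \eqref{eq:Cv} reads $\ell_v\bowtie_v K_v$, $-m_v\bowtie_v K_v$ or $0\bowtie_v K_v$, while the conditions at all other vertices are unaffected by a change of decoration at the leaf (when the second coordinate stays $0$) or are compensated by the $\Delta+K$ shift at the parent (when the gray leaf is deleted). Your gray-leaf argument is exactly the paper's last case --- you even spell out the $K$-bookkeeping at the parent more explicitly than the paper does --- and your ``automatically satisfied'' case is exactly the paper's proof of the two relations replacing $\ge_0$ on a white leaf (resp.\ $\le_0$ on a black leaf) by $\varnothing_0$.

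The mismatch concerns the remaining two of the five pictured relations, which state that $\le_0$ on a white leaf (resp.\ $\ge_0$ on a black leaf) can be replaced by the decoration $=_0$. The paper's proof is one line: the summation variable $\ell_v$ runs over nonnegative integers, so the condition $\ell_v\le 0$ is equivalent to conditioning on $\ell_v=0$. Your proposal never states this; in its place you prove two facts that are not part of the lemma --- the deletion of an equality-decorated leaf as a scalar factor $\Cat_K t^K$ (which is a special case of \cref{lem:equalitylabel}, a separately proved lemma) and a merging of opposite inequalities via \cref{lem:reverting}. Your own ingredients do suffice: from $\One[\ell_v\ge 0]=1$ on the summation range and $\One[\ell_v\ge 0]+\One[\ell_v\le 0]=1+\One[\ell_v=0]$ one gets $\One[\ell_v\le 0]=\One[\ell_v=0]$, which is precisely the missing relation; but this assembly is never made, so as written two of the five identities are left unproven. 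The gap is trivial to fill, and everything else in your argument is sound and matches the paper's proof.
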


\begin{proof}
Let us prove the first one. Let $v$ be the leaf in question. Then, the condition $C_v$ can be written $\ell_v \geq 0$, which is a void condition. 
To prove the second one, observe that, this time, one can write $C_v$ as $\ell_v \leq 0$. Since the sum $S(T)$ is only on nonnegative values of $\ell_v$, this is equivalent to conditioning on $\ell_v=0$. The third and fourth equalities are proved the same way. 

 In order to prove the last one, observe that the condition $C_v$ is simply $0 \ast K$.
 Hence we only need to check that the conditions $C_{v'}$ for $v' \neq v$ are the same on both sides. 
 Take $v' \neq v$. Since $v$ is gray, no variable in $S(T)$ is indexed by $v$
 and the variable sum/difference in the left-hand side of $C_{v'}$ is the same on both sides.
 One also check easily that the quantity $\sum_{v'' \le_T v'} K_{v''}$ is the same on both sides. This shows the last equality. 
\end{proof}

\subsection{Relations using the Catalan recurrence}
Our next set of relations takes advantage of the relation $\sum_{x+y=z} \Cat_x \Cat_y= \Cat_{z+1}$ satisfied by Catalan numbers.
To use this,
there needs to be two variables $\ell_{w_1}$ and $\ell_{w_2}$ (or $m_{b_1}$ and $m_{b_2}$),
which always appear together in the conditions $(C_v)$.

A first situation where this happens is when a node has two children of the same color (white or black) both decorated with a void symbol.
\begin{lemma}\label{lem:twin_varnothing}
  $\begin{array}{c} \includegraphics{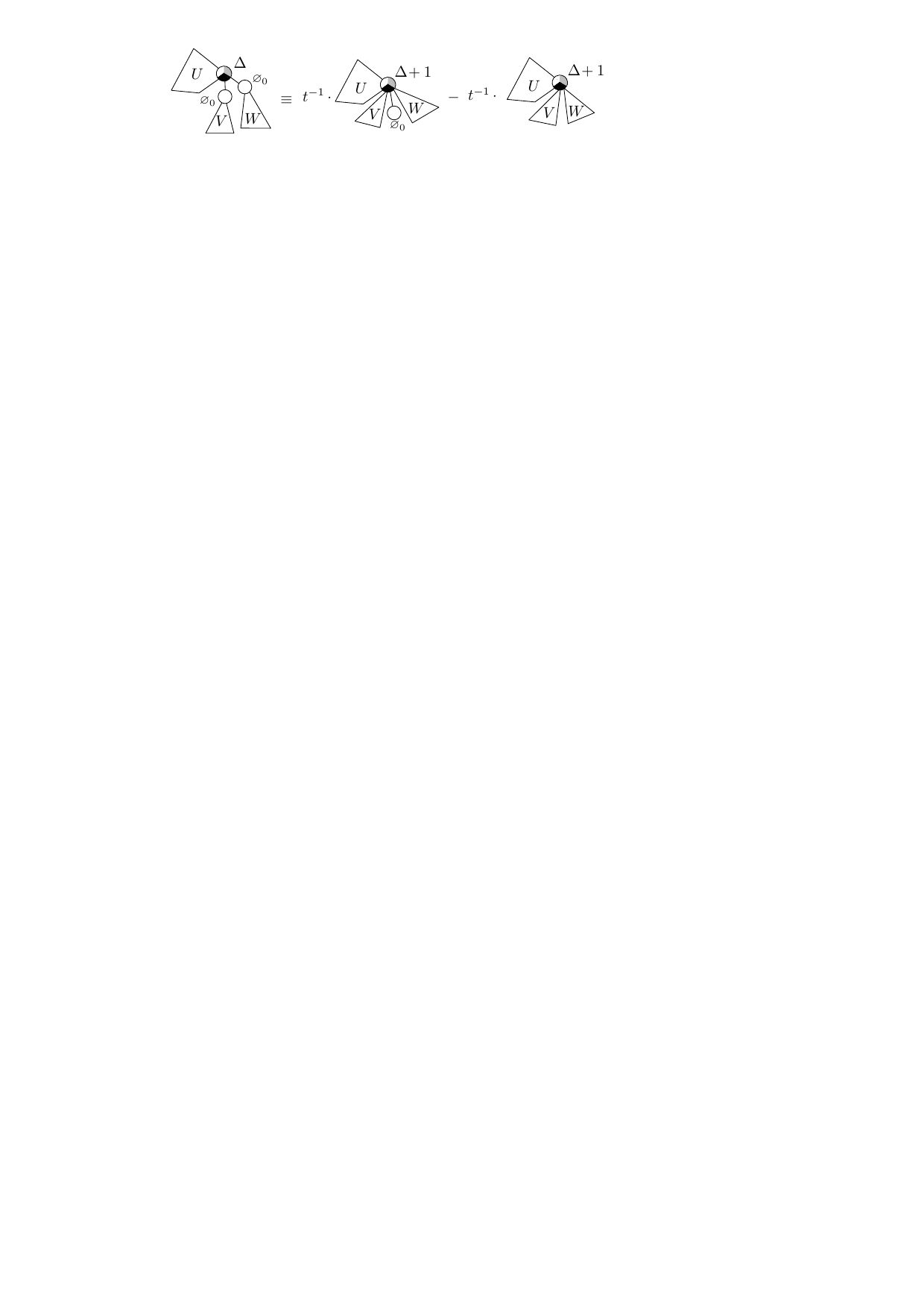} \end{array}$.   \\
    The same holds replacing white vertices with black vertices and the shift $+1$ with $-1$.\end{lemma}

\begin{proof}
Let $T$ be the tree on the left-hand side of \cref{lem:twin_varnothing},
$w_1, w_2$ be the two white siblings with label $\varnothing$, 
and $p$ their common parent. 
Since $w_1$ and $w_2$ are decorated with $\varnothing$, the conditions $C_{w_1}$ and $C_{w_2}$ are void.
Furthermore, if $v$ is an ancestor of $p$, possibly $p$ itself, then the condition $C_{v}$ 
involves the sum $\ell_{w_1}+\ell_{w_2}$. Finally, if $v \ne w_1, w_2$ and $v$ is not an ancestor of $p$ (which we write $v\not\ge_T p$),
then the condition $C_{v}$ does not involve any of $\ell_{w_1}$, $\ell_{w_2}$.
Therefore the sum $S(T)$ can be written as follows:
\begin{multline*}
  S ( T ) = \sum_{(\ell_w)_{w\ne w_1,w_2} \atop (m_b)} \left[ \, \left( \prod_{w: w \ne w_1,w_2} \Cat_{\ell_w} t^{\ell_w} \right) 
\, \left( \prod_{b} \Cat_{m_b} t^{m_b} \right) \, \left( \prod_{v \not\ge_T p} \One[C_v] \right)\right. \\
\left. \cdot \sum_{L \ge 1}\,  \left( \prod_{v \ge_T p} \One[\tilde C_v] \right)
\, \left( \sum_{\ell_{w_1}+\ell_{w_2}=L-1} \Cat_{\ell_{w_1}} t^{\ell_{w_1}}\Cat_{\ell_{w_2}} t^{\ell_{w_2}}\right)\right],
\end{multline*}
where $\tilde C_v$ is the condition $C_v$ with the sum $\ell_{w_1}+\ell_{w_2}$ replaced by $L-1$.
The last sum in the above display simplifies as $t^{-L+1} \Cat_{L}$.
Having a sum over $L \ge 1$ is inconvenient, so we rewrite it as a sum over $L \ge 0$,
from which we remove the term corresponding to $L=0$. This gives
\begin{multline*}
  \hspace{-.5cm} S( T ) = t^{-1} \sum_{L,(\ell_w)_{w\ne w_1,w_2} \atop (m_b)}
 \Cat_{L} t^{L} \left( \prod_{w: w \ne w_1,w_2} \Cat_{\ell_w} t^{\ell_w} \right) 
\, \left( \prod_{b} \Cat_{m_b} t^{m_b} \right) \left( \prod_{v \not\ge_T p} \One[C_v] \right)\left( \prod_{v \ge_T p} \One[\tilde C_v] \right) \\
- t^{-1} \sum_{(\ell_w)_{w\ne w_1,w_2} \atop (m_b)}
\left( \prod_{w: w \ne w_1,w_2} \Cat_{\ell_w} t^{\ell_w} \right) 
\, \left( \prod_{b} \Cat_{m_b} t^{m_b} \right) \left( \prod_{v \not\ge_T p} \One[C_v] \right) \left( \prod_{v \ge_T p} \One[\tilde C_v] \right) ,
\end{multline*}
where all summation variables run over nonnegative integers.
The first (resp.~second) sum can be identified to $S(T_1)$,
where $T_1$ is the middle tree in \cref{lem:twin_varnothing}
(resp.~$S(T_2)$,
where $T_2$ is the last tree in \cref{lem:twin_varnothing}).
This proves the equation in the lemma.
The proof of the analogue statement with black vertices and shift $-1$
is similar.
\end{proof}

A second situation is when a black or white vertex has a child of the same color
decorated with a void symbol.
\begin{lemma}\label{lem:consecutive_varnothing}
  $\begin{array}{c} \includegraphics{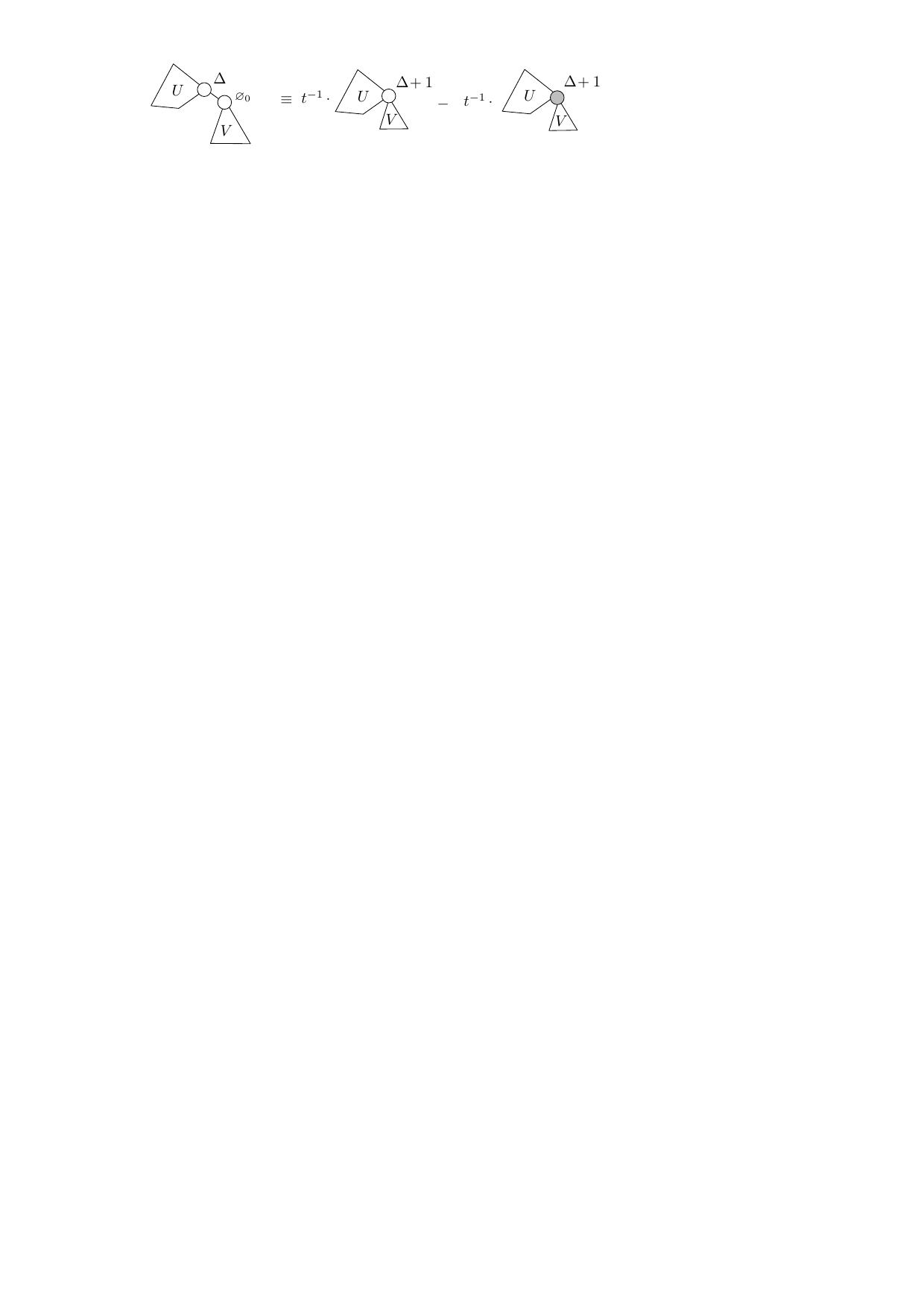} \end{array}$.   \\
    The same holds replacing white vertices with black vertices and the shift $+1$ with $-1$.\end{lemma}
\begin{proof}
The proof is similar to that of \cref{lem:twin_varnothing}.
\end{proof}

\section{Further examples}
\label{sec:more_examples}
We present here further examples,
using now the tools from \cref{sec:relations}. Our goal is not anymore
to compute explicitly the sum $S(T)$, but to prove that  $S(T)$ belongs to~$\cA$
(we only consider here trees decorated by an equality symbol at the root).
We will therefore use the symbol $F \cong G$ to mean that $F-G$ is in $\cA$.
The purpose is to simplify the discussion; it should be nevertheless
clear that everything can be computed explicitly.

\subsection{A decorated path with four vertices}
\label{ssec:ex_P4+1}

We consider the tree of \cref{fig:more_examples}, left. 
This is a decorated variant of the path with four vertices that we denote $P_4^{+1}$.
We will see in the next section that this term appears in the recursive decomposition of $S(P_5)$
($P_5$ being the  path with five vertices), and thus deserves our attention.
By definition, we have
\[S(P_4^{+1})  = \sum_{\substack{\ell_1, \ell_2, m_1, m_2 \\ \ell_1 + \ell_2= m_1 + m_2 +1}}
 \Cat_{\ell_1} \Cat_{\ell_2}\Cat_{m_1}\Cat_{m_2}t^{\ell_1 + \ell_2+m_1 + m_2} \One[\ell_1 \le m_1].\]
  We could, as in \cref{ssec:ex_P4},
 express $S(P_4^{+1})$ in terms of similar expressions with $\ell_1 \le m_1$
 replaced by no condition/an equality/the reverse inequality $\ell_1 \ge m_1$.
 However, the shift by $+1$ breaks the symmetry between $\ell$'s and $m$'s
 and we do not have an obvious relation between $S(P_4^{+1}) $ 
 and the similar expression with the condition $\ell_1 \ge m_1$.
 Thus, we need to do something different.

\begin{figure}
  \begin{center}
    \includegraphics{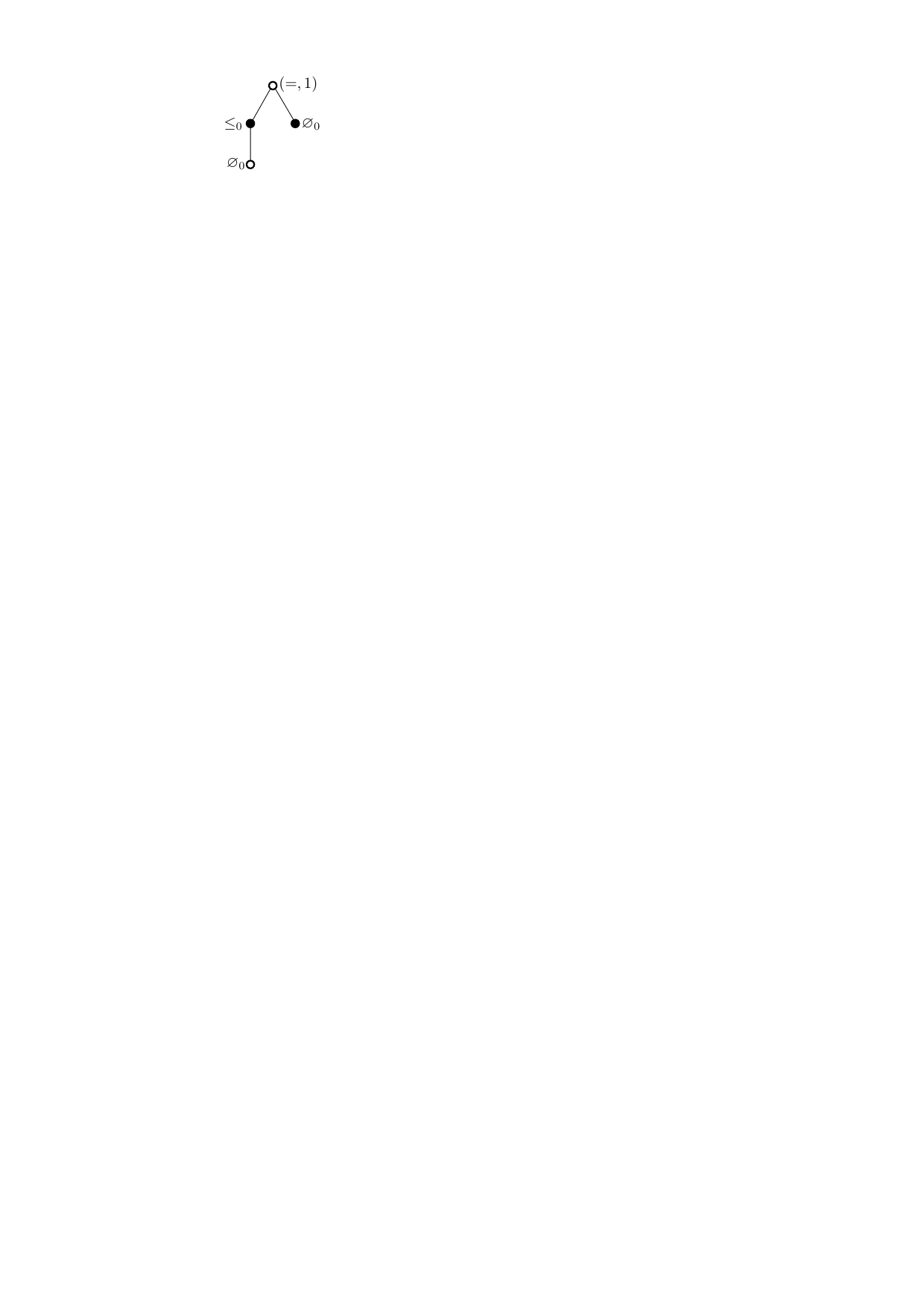} \qquad \qquad \includegraphics{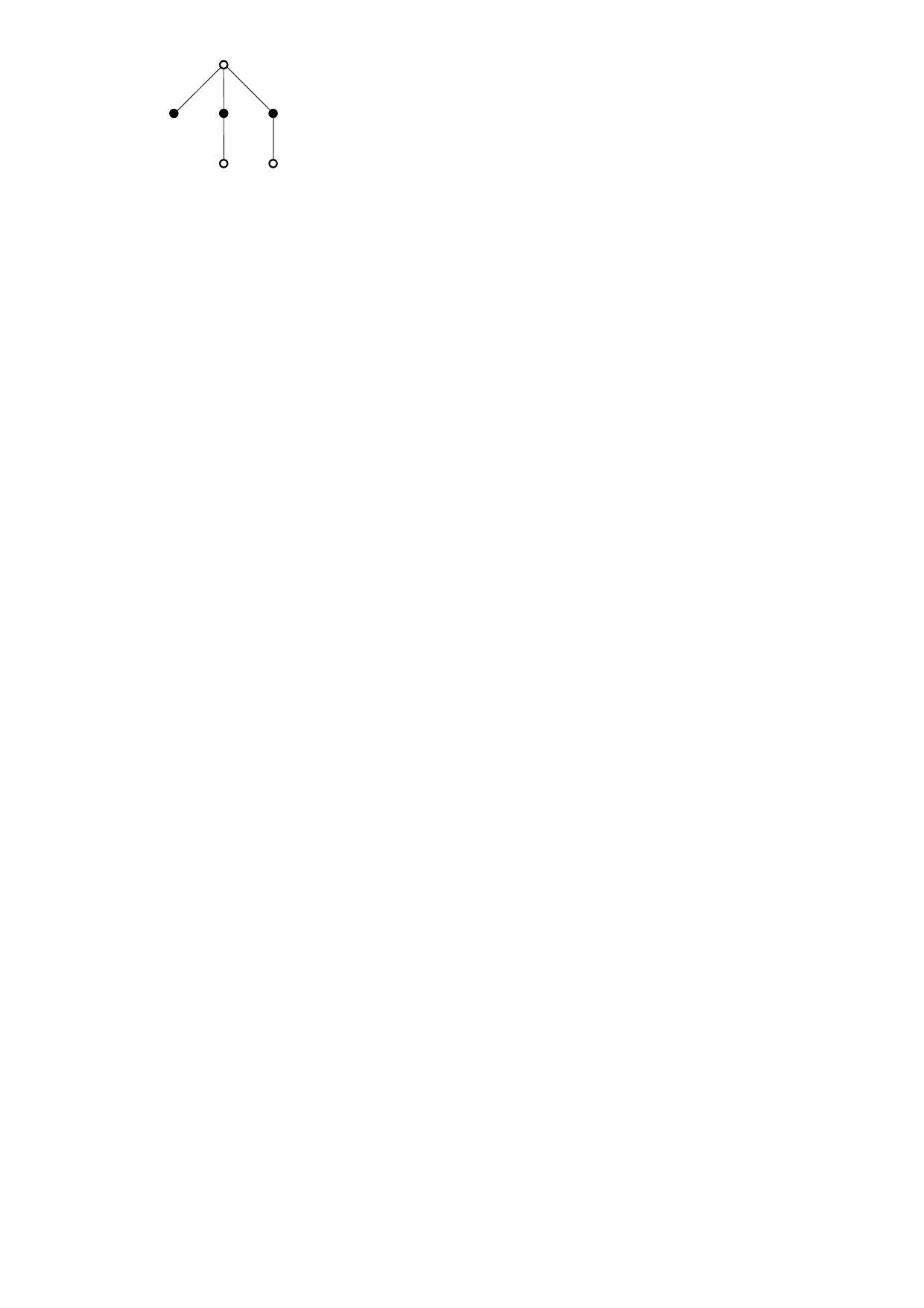} 
  \end{center}
  \caption{Examples of trees considered in \cref{sec:more_examples}. Left: A decorated tree with four vertices (\cref{ssec:ex_P4+1}).
  Right: a \enquote{long star} with six vertices (\cref{ssec:ex_tree_6}).}
  \label{fig:more_examples}
\end{figure}

 The idea is to introduce artificially comparisons between $\ell_2$ and $m_2$.
 Locally in this section, for $\bowtie_1,\bowtie_2$ in $\{=,\le,\ge,\varnothing\}$, let us define $A[\bowtie_1,\bowtie_2]$ as
\[A[\bowtie_1,\bowtie_2] = \sum_{\substack{\ell_1, \ell_2, m_1, m_2 \\ \ell_1 + \ell_2= m_1 + m_2 +1 \\ \ell_1 \bowtie_1 m_1,\ \ell_2 \bowtie_2 m_2}}
 \Cat_{\ell_1} \Cat_{\ell_2}\Cat_{m_1}\Cat_{m_2}t^{\ell_1 + \ell_2+m_1 + m_2}.\]
 Note that $A$ is symmetric in its two arguments. Also, by definition, $S(P_4^{+1})  = A[\le,\varnothing]$. 
Combining the identities
$  \One[\ell_i \ge m_i] = 1 + \One[\ell_i = m_i] -\One[\ell_i \le m_i]$
for $i$ in $\{1,2\}$, we get
\begin{equation}\label{eq:expansion_9_terms}
A[\ge,\ge]\, =\, A[\varnothing,\varnothing]+A[=,=]+A[\le,\le]-2A[\le,\varnothing]+2A[=,\varnothing]-2A[=,\le].
\end{equation}
Recall that our goal is to compute $A[\le,\varnothing]$ (or, at least prove that it is in $\cA$).
All terms involving an equality symbol factorize trivially (recall \cref{lem:equalitylabel}) and
can be proved to be in $\cA$. The quantity $A[\le,\le]$ is equal to $0$,
since there is no tuples $(\ell_1, \ell_2, m_1, m_2)$ with $\ell_1 + \ell_2= m_1 + m_2 +1$ 
and $\ell_i \le m_i$ for $i$ in $\{1,2\}$ (empty sum).
For $A[\ge,\ge]$ we observe that $\ell_1 + \ell_2= m_1 + m_2 +1$ 
and $\ell_i \ge m_i$ for $i$ in $\{1,2\}$ implies that either $\ell_1 =m_1+1$ and $\ell_2=m_2$,
or the opposite. In both cases, the sum factorizes,
proving that $A[\ge,\ge]$ lies in $\cA$. We then consider $A[\varnothing,\varnothing]$.
The following computation corresponds to a double application of \cref{lem:twin_varnothing}:
\begin{align*} 
  A[\varnothing,\varnothing] &= \sum_{\ell_1, \ell_2, m_1, m_2 \atop \ell_1 + \ell_2= m_1 + m_2 +1} \Cat_{\ell_1} \Cat_{\ell_2}\Cat_{m_1}\Cat_{m_2}t^{\ell_1 + \ell_2+m_1 + m_2} \\
  &=\sum_{L,M: L=M+1} t^{L+M} \left( \sum_{\ell_1 + \ell_2=L} \Cat_{\ell_1} \Cat_{\ell_2} \right)
  \left( \sum_{m_1 + m_2=M} \Cat_{m_1} \Cat_{m_2} \right)\\
  &= \sum_{L,M: L=M+1} \Cat_{L+1} \Cat_{M+1} t^{L+M}.
 \end{align*}
 Up to some boundary terms which are easily proved to be in $\cA$, the latter is $t^{-2} S_{=,1}$ with the notation 
 of \cref{ssec:base_case}, which is in $\cA$ (\cref{prop:basecase}).

 To sum up, we have proved that all terms in \cref{eq:expansion_9_terms}, except for $A[\le,\varnothing]$,
 belong to $\cA$. We can also verify easily that they have degree at most $4$.
  Therefore $S(P_4^{+1})  = A[\le,\varnothing]$ belongs to $\cA$, and has
degree at most 4, as predicted by \cref{thm:sum-pi-generalized}.
\medskip

 An interesting feature in this example is that we did not simply start form $S(P_4^{+1})$ and apply
 reduction operations from the previous section, but $S(P_4^{+1})$ appears as \enquote{unknown} in some equation,
 where we can prove that all other terms belong to $\cA$.
 For more complicated trees, we will need to construct linear systems of equations to compute $S(T)$
 (see \cref{ssec:ex_tree_6} for an example and \cref{sec:long_stars} for the general case).

 \subsection{The path with five vertices}
 We consider here the path $P_5$ with five vertices. Rooting it in its central vertex,
 we can write
 \[S(P_5)=\sum_{\substack{\ell_1, \ell_2, \ell_3, m_1, m_2 \\  \ell_1 + \ell_2+\ell_3= m_1 + m_2 \\ \ell_1 \le m_1, \ell_2 \le m_2}} \Cat_{\ell_1} \Cat_{\ell_2}\Cat_{\ell_3}\Cat_{m_1}\Cat_{m_2}t^{\ell_1 + \ell_2+\ell_3+m_1 + m_2}.\]
 As before, we can replace the inequality $\ell_1 \le m_1$ by no condition, $\ell_1=m_1$ and $\ell_1 \ge m_1$ successively
 (the last one appearing with a minus sign).
 The term with $\ell_1=m_1$ is easily factored out and proved to be in $\cA$.
 In the term with no condition between $\ell_1$ and $m_1$, we set $L=\ell_1+\ell_3$ and
 use the quadratic Catalan recurrence (see \cref{lem:consecutive_varnothing}).
 Proving that this term is in $\cA$ reduces to proving that $S(P_4^{+1})$ is in $\cA$,
 which has been done in the previous section. We therefore have
 \[ S(P_5) \cong - \sum_{\substack{\ell_1, \ell_2, \ell_3, m_1, m_2 \\  \ell_1 + \ell_2+\ell_3= m_1 + m_2 \\ \bm{ \ell_1 \ge m_1}, \ell_2 \le m_2}} \Cat_{\ell_1} \Cat_{\ell_2}\Cat_{\ell_3}\Cat_{m_1}\Cat_{m_2}t^{\ell_1 + \ell_2+\ell_3+m_1 + m_2}.\]
 We do the same operation on $\ell_2 \le m_2$. With similar justification as above, we get
 \[ S(P_5) \cong \sum_{\substack{\ell_1, \ell_2, \ell_3, m_1, m_2 \\  \ell_1 + \ell_2+\ell_3= m_1 + m_2 \\ \bm{\ell_1 \ge m_1}, \bm{\ell_2 \ge m_2}}} \Cat_{\ell_1} \Cat_{\ell_2}\Cat_{\ell_3}\Cat_{m_1}\Cat_{m_2}t^{\ell_1 + \ell_2+\ell_3+m_1 + m_2}.\]
The latter sum is however trivial since the condition in the index
 imply $\ell_3=0$, $\ell_1=m_1$ and $\ell_2=m_2$.
This proves that $S(P_5)$ is in $\cA$. Again, one easily checks that it has degree at most $5$.

\subsection{A tree with six vertices}
\label{ssec:ex_tree_6}
We consider here the tree $T_6$ from \cref{fig:more_examples}, right.
Such trees will be referred to as long stars, as they have a central vertex, to which branches of length at most 2 are attached, and will be considered more in detail in \cref{sec:long_stars}.
We have
\[ S(T_6) =\sum_{\substack{\ell_1, \ell_2, \ell_3, m_1, m_2,m_3 \\ 
\ell_1 + \ell_2+\ell_3= m_1 + m_2 +m_3\\
\ell_1 \le m_1, \ell_2 \le m_2}} 
\Cat_{\ell_1} \Cat_{\ell_2}\Cat_{\ell_3}\Cat_{m_1}\Cat_{m_2}\Cat_{m_3}t^{\ell_1 + \ell_2+\ell_3+m_1 + m_2+m_3} . \]
Again we need efficient notation: for $i+j \le 3$ we let $B_{i,j}$ the above sum, where the condition $\ell_1 \le m_1, \ell_2 \le m_2$
is replaced by
\[ \ell_h \le m_h \text{ for }h \le i, \text{ and } \ell_h \ge m_h \text{ for }i < h \le i+j.\]
In words, $i$ is the number of inequalities in the direction $\ell_h \le m_h$, and 
$j$ the number in the direction $\ell_h \ge m_h$ (we exploit here the symmetry, 
which ensures that $B_{i,j}$ does not depend on which indices are compared,
provided that the same index is not involved in several comparisons).
In particular, the quantity we are interested in is $S(T_6)=B_{2,0}$.
When $i+j \le 1$, we have (at least) two variables $\ell_h$ which are not involved in comparison,
and we can use the quadratic Catalan recurrence as usual (i.e.~\cref{lem:twin_varnothing}) to reduce
our sum to sums with fewer variables, already proved to be in $\cA$.
Also the sums $B_{3,0}$ and $B_{0,3}$ are trivial, since, in these cases, the conditions
force $\ell_h=m_h$ for all $h$. Furthermore, using
the usual transformations
 $1+\One[\ell_h=m_h]=\One[\ell_h \ge m_h] + \One[\ell_h \le m_h]$ (\cref{lem:reverting})
and observing that a condition $\ell_h=m_h$ in the index set leads to a factorization of the sum
into smaller sums, already proved to be in $\cA$, we get:
\[
  B_{3,0}+B_{2,1} \cong B_{2,0}, \quad B_{2,1} + B_{1,2} \cong B_{1,1}, \quad B_{1,2}+B_{0,3} \cong B_{0,2}.
  \]\[
B_{2,0}+B_{1,1} \cong B_{1,0}, \quad B_{1,1}+B_{0,2} \cong B_{0,1}.
\]
Recalling that $B_{3,0}$, $B_{0,3}$, $B_{1,0}$ and $B_{0,1}$ are in $\cA$, we can for example write
\[2B_{2,1} + B_{1,2} \cong (B_{3,0}+B_{2,1}) + (B_{2,1} + B_{1,2}) \cong B_{2,0}+B_{1,1} \cong B_{1,0} \cong 0,\]
and similarly, $B_{2,1} +2B_{1,2} \cong 0$.
Since the matrix $(\begin{smallmatrix} 2&1 \\ 1&2 \end{smallmatrix})$ is invertible, we conclude that both $B_{1,2}$ and $B_{2,1}$
  are in $\cA$. This implies in particular that $S(T_6) = B_{2,0} \cong B_{3,0}+B_{2,1}$ is in $\cA$ (in fact, this implies
  that all $B_{i,j}$ are in $\cA$).
  Again, a careful thinking about the above argument convinces oneself
  that the degree bounds $d_{S(T_6)} \le 6$ holds.

\section{Long stars}
\label{sec:long_stars}

As illustrated in the previous section, the relations of \cref{sec:relations}
are not sufficient to reduce our trees and compute $S(T)$ directly by induction.
In this section, we prove new relations,
under the form of linear systems involving some $S(T)$.

Let $R$ (for root or {\em rootstock}) be a decorated tree
with a special non-decorated leaf, which we call blossom.
We say that $R$ is trivial if it is reduced to that single blossom.
Given a decorated tree $T$, we write $R\mid T$ for the tree 
obtained by grafting $T$ on $R$, 
i.e.~replacing the blossom of $R$ by $T$.
The following families of trees will be of particular interest in this section: let
\[U^{\Delta}_{i,j,k}=\hspace{-3mm}\begin{array}{c}
\includegraphics{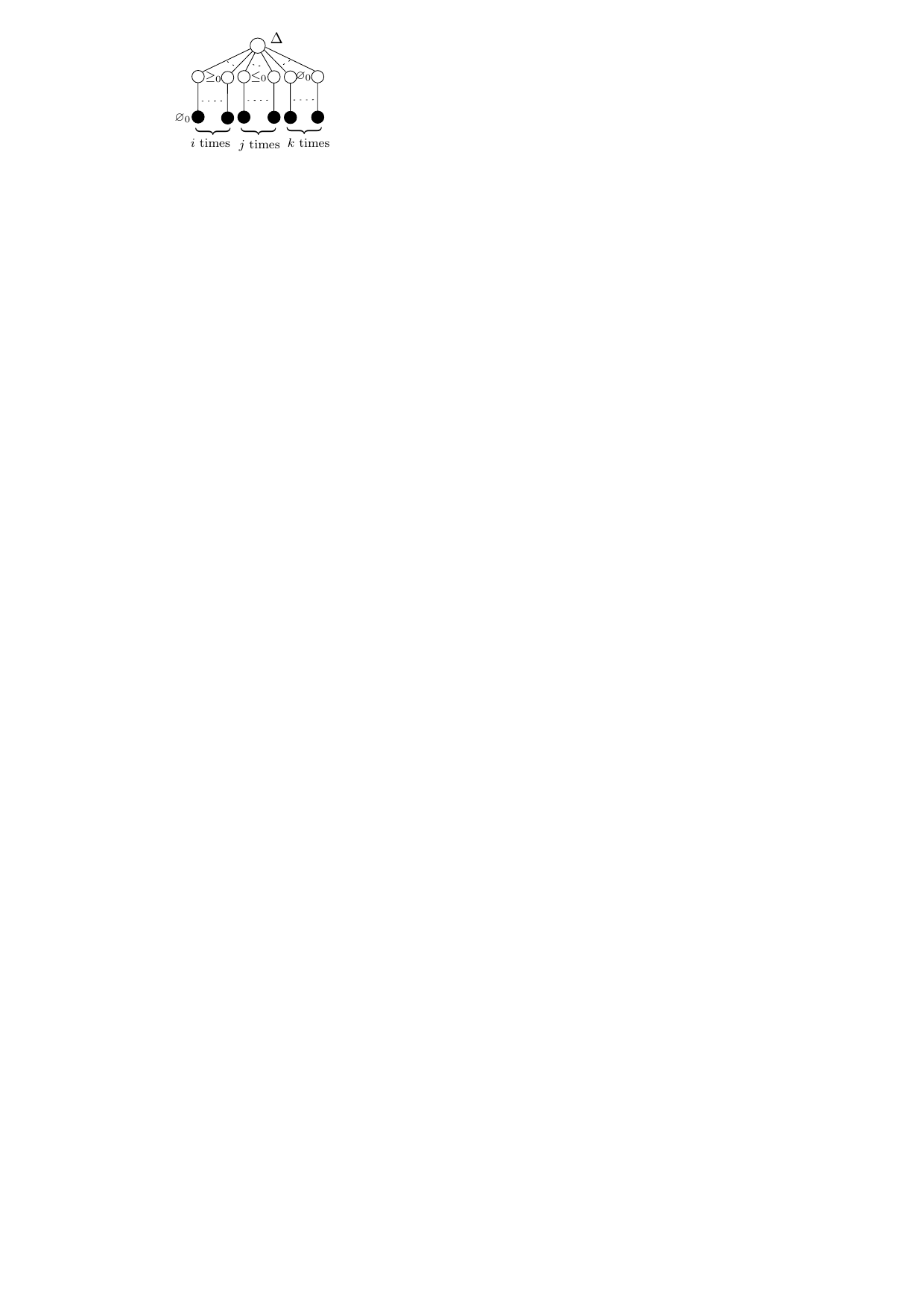}
\end{array},\quad
\overline{U}^{\Delta}_{i,j,k}=\hspace{-3mm}\begin{array}{c}
\includegraphics{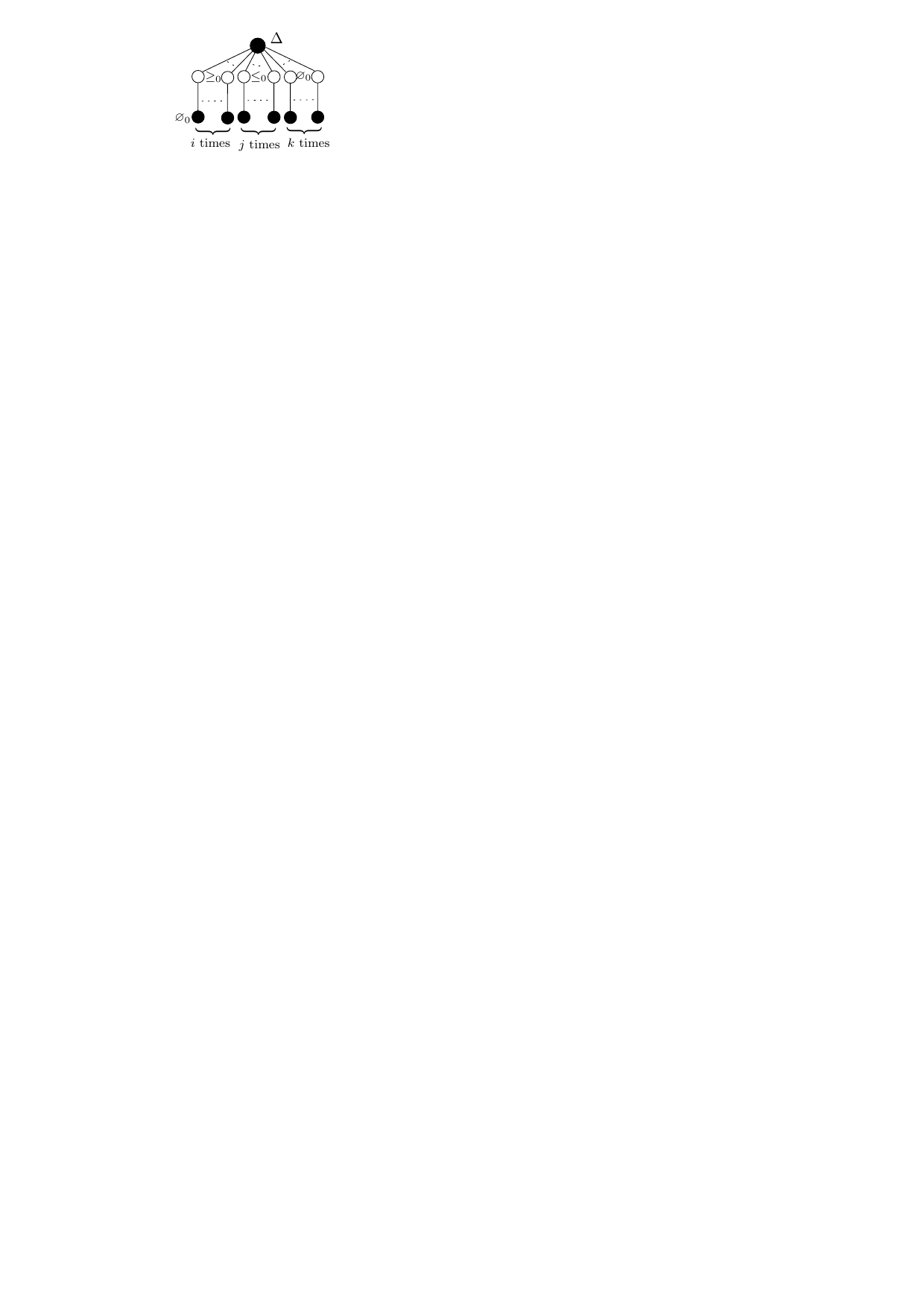}
\end{array},\quad
V^{\Delta}_{i,j,k}=\hspace{-3mm}\begin{array}{c}
\includegraphics{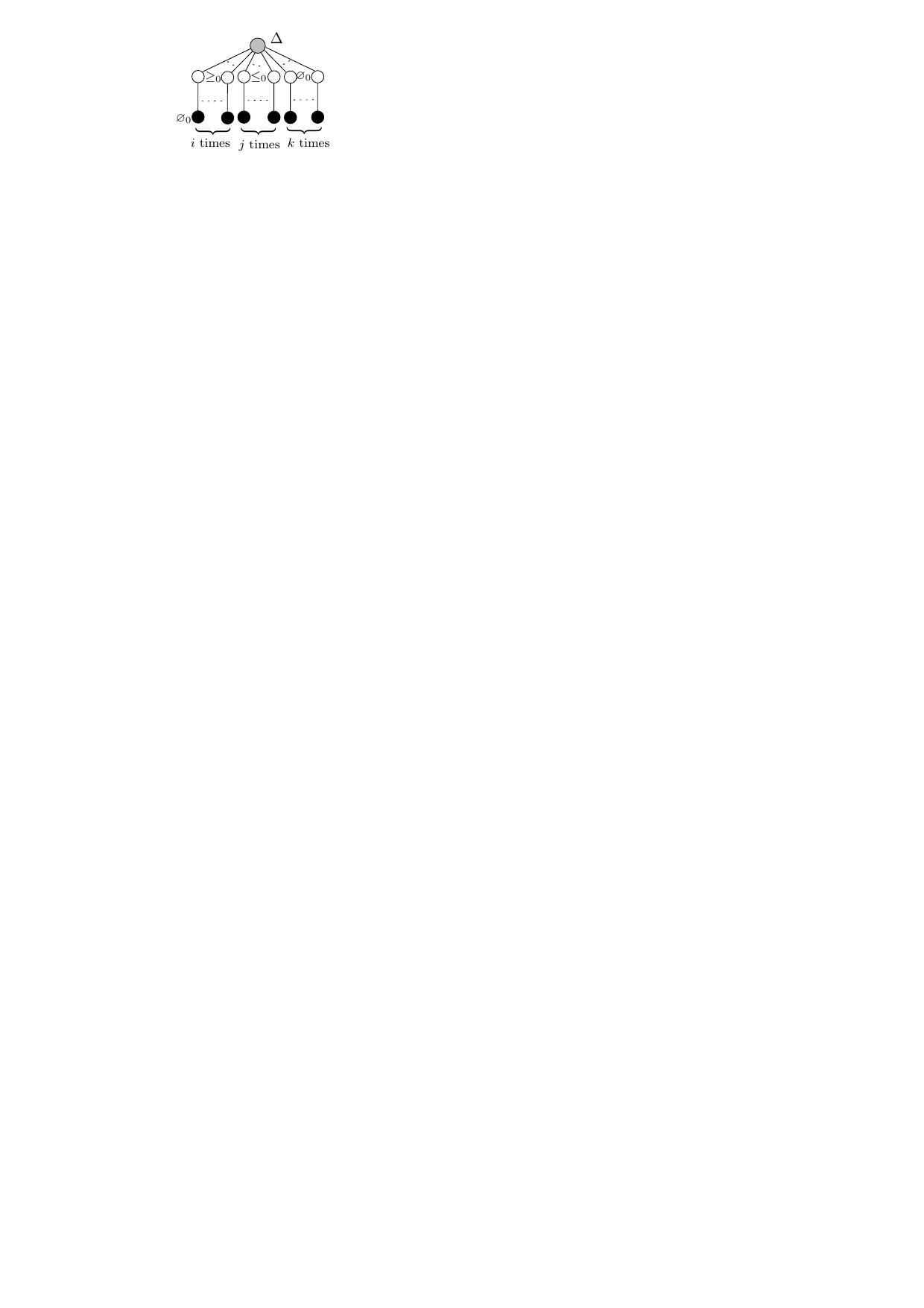}
\end{array}.\]
Only the root color changes between the pictures, and the black leaves all have the decoration $\varnothing_0$. We call all these trees \textit{long stars}.
As above, $\Delta$ represents a generic pair in $\{=,\le,\ge,\varnothing\} \times \mathbb Z$.
We note that these trees have many equivalent forms (equivalent in the sense that
the associated sums are the same). 
In particular, switching the color of a black leaf and of its white parent
does not change the sum (as it does not change the set of summation variables,
nor the conditions). 
Observe that, for any $k \geq 1$, in a tree $V^{\Delta}_{i,j,k}$ we can bring up a pair black leaf-white parent with $\varnothing$ decoration to the root (using \cref{lem:varnothinglabel}). Hence, we always have $S(R \mid V^{\Delta}_{i,j,k})=S(R \mid T)$, where $T$ has strictly smaller size than $V^{\Delta}_{i,j,k}$. We will use it repeatedly in what follows.

\subsection{The $U$-family}
Our goal here is to establish some relation involving trees $U^{\Delta}_{i,j,k}$,
which will allow later for recurrences.
By symmetry, we have similar relations for $\overline{U}^{\Delta}_{i,j,k}$,
which we will not write down explicitly.

We first note that, as soon as $k \ge 1$, \cref{lem:consecutive_varnothing} along with \cref{lem:varnothinglabel} implies 
that
\begin{equation}\label{eq:reduc_U_k}
S\big(R|U^{\Delta}_{i,j,k}\big) = t^{-1} S\big(R|V^{\Delta+1}_{i,j,k}\big) - t^{-1} S\big(R|\overline{U}^{\Delta+1}_{i,j,k-1}\big).
\end{equation}
Moreover, from \cref{lem:reverting,lem:equalitylabel}, one has
\begin{equation}\label{eq:reduc_U_ij}
S\big(R|U^{\Delta}_{i+1,j,0}\big) + S\big(R|U^{\Delta}_{i,j+1,0}\big) = S\big(R|U^{\Delta}_{i,j,1}\big) + S\big(R|U^{\Delta}_{i,j,0}\big) \cdot S_{=,0},
\end{equation}
where we recall from \cref{ssec:base_case} that $S_{=,0}\coloneqq \sum_{\ell \ge 0} \Cat_{\ell}^2 t^{2\ell}$.
Using \cref{eq:reduc_U_ij}, together with \cref{eq:reduc_U_k},
we can express any $S\big(R|U^{\Delta}_{i,j,0}\big)$
in terms of sums associated with smaller trees, $S\big(R|V^{\Delta+1}_{i',j',1}\big)$ where $i'+j'+1=i+j$, and $S\big(R|U^{\Delta}_{d,0,0}\big)$ where $d=i+j$.
To compute the latter, let us recall that in the definition of $S\big(R|U^{\Delta}_{d,0,0}\big)$,
we have variables $\ell_v, \ell_{w_1}, \dots, \ell_{w_d}, m_{b_1}, \dots, m_{b_d}$ corresponding to the nodes of the tree $U^{\Delta}_{d,0,0}$ ($v$ being the root of this tree),
and conditions $(C_{w_i}): \ell_{w_i} \ge m_{b_i}$.
We now consider two possible decorations of $U^{\Delta}_{d,0,0}$:
\begin{itemize}
\item If $\Delta\ =(\geq,0)$, then the condition $(C_v)$ is
\[\ell_v+ \ell_{w_1}+ \dots+ \ell_{w_d} \ge  m_{b_1}+ \dots+ m_{b_d}.\]
It is trivially implied by the conjunction of conditions $(C_w, \, w\text{ child of }v)$.
Thus we have
\begin{equation}
\label{eq:U1}
S\big(R|U^{\geq,0}_{d,0,0}\big)=S\big(R|U^{\varnothing,0}_{d,0,0}\big),
\end{equation}
and we can use \cref{lem:varnothinglabel} (or \cref{lem:root_varnothing} if $R$ is trivial) to simplify the right-hand side further (see \cref{sec:proof_main_theorem} for more details).
\item If $\Delta\ =(\leq,0)$, then the condition $(C_v)$ is
\[\ell_v+ \ell_{w_1}+ \dots+ \ell_{w_d} \le  m_{b_1}+ \dots+ m_{b_d}.\]
Assuming the conditions $(C_w, \, w\text{ child of }v)$, this can only happen
if $\ell_v=0$ and $\ell_{w_i}=m_{b_i}$ for all $i \ge 0$.
Hence we have
\begin{equation}
\label{eq:U2}
S\big(R|U^{\leq,0}_{d,0,0}\big)=S(R) \cdot (S_{=,0})^d,
\end{equation}
 where the blossom should be simply ignored when computing $S(R)$.
\end{itemize}

\subsection{The $V$-family}
The case of $S(R|V^{\Delta}_{i,j,k})$ is slightly more delicate.
When $k \ge 2$, applying twice \cref{lem:twin_varnothing} (to a pair of white vertices with decoration $\varnothing$, and then to the pair of the corresponding black leaves) yields
\begin{equation}
\label{eq:V1}
S(R|V^{\Delta}_{i,j,k}) = t^{-2} \, S(R|V^{\Delta}_{i,j,k-1}) + t^{-2} \, S(R|V^{\Delta}_{i,j,k-2}) - t^{-2} \, S(R|U^{\Delta}_{i,j,k-2}) - t^{-2} \, S(R|\overline{U}^{\Delta}_{i,j,k-2}).
\end{equation}

Again, $S(R|V^{\Delta}_{d,0,0})$ can be simplified depending on the decoration $\Delta$:
\begin{itemize}
\item If $\Delta\ =(\geq,0)$, then the condition $(C_v)$ is implied by 
$(C_w, \, w\text{ child of }v)$, and
\begin{equation}
\label{eq:V2}
 S\big(R|V^{\geq,0}_{d,0,0}\big)=S\big(R|V^{\varnothing,0}_{d,0,0}\big),
\end{equation}
which can be simplified further using \cref{lem:varnothinglabel} (or \cref{lem:root_varnothing} if $R$ is trivial).
\item If $\Delta\ =(\leq,0)$, then the condition 
$(C_v) \wedge (C_w, \, w\text{ child of }v)$ forces $\ell_{w_i}=m_{b_i}$ for all $i \ge 0$,
and we have
\begin{equation}
\label{eq:V3}
S\big(R|V^{\leq,0}_{d,0,0}\big)=S(R) \cdot (S_{=,0})^d.
\end{equation}
\end{itemize}
The same holds by symmetry for $S(R|V^{\Delta}_{0,d,0})$, exchanging the two cases above.
The following lemma relates $S(R|V^{\Delta}_{i,j,0})$ for $i,j>0$,
to $S(R|V^{\Delta}_{d,0,0})$, $S(R|V^{\Delta}_{0,d,0})$
and $S(R|V^{\Delta}_{i',j',k})$ for either $k = 2$ and $i'+j'=i+j-2$, or $i'+j'+k < i+j$.
\begin{lemma}\label{lem:linear_system}
  For any $R$, any $\Delta$ and any $d \ge 3$, we have
\[  \begin{pmatrix}
    2 & 1 & 0 & \ldots &0 \\
    1 & 2 & 1 & \ddots & \vdots \\
    0 & \ddots  & \ddots & \ddots & 0 \\
    \vdots & \ddots & 1 & 2 & 1  \\
    0 & \ldots &0 & 1 & 2
  \end{pmatrix} \,  \begin{pmatrix}
    S( R \mid V^{\Delta}_{1,d-1,0} )\\
    \\
    \vdots \\
    \\
     S( R \mid V^{\Delta}_{d-1,1,0} )
  \end{pmatrix} 
  \, = \,  \begin{pmatrix}
    X_1 \\ \\ \vdots \\ \\ X_{d-1}
  \end{pmatrix} - \begin{pmatrix} S(R|V^{\Delta}_{0,d,0}) \\ 0 \\ \vdots \\ 0 \\ S(R|V^{\Delta}_{d,0,0}) \end{pmatrix} , \]
  where, for $1 \le i \le d-1$,
 
  $$X_i=S(R \mid V^{\Delta}_{i-1,d-1-i,2}) + 2S(R \mid V^{\Delta}_{i-1,d-1-i,1}) \cdot S_{=,0} + S(R \mid V^{\Delta}_{i-1,d-1-i,0}) \cdot (S_{=,0})^2.
  $$
  For $d=2$, we have a single equation
  \[2S(R \mid V^{\Delta}_{1,1,0})=X_1-S(R \mid V^{\Delta}_{0,2,0})-S(R \mid V^{\Delta}_{2,0,0}).\]
\end{lemma}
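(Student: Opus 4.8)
The plan is to derive the entire system from a single combinatorial identity, applied to two branches at once; this is the exact generalization of the computation of $S(T_6)$ carried out in \cref{ssec:ex_tree_6}, and the one-branch analogue for the $U$-family is already recorded in \eqref{eq:reduc_U_ij}. Fix $d \ge 2$ and an index $p$ with $1 \le p \le d-1$, and set $i_0 = p-1$, $j_0 = d-1-p$, so that $i_0 + j_0 = d-2$. I would consider the template long star $R \mid V^{\Delta}$ with $d$ branches in which $i_0$ branches carry the decoration $\le_0$, $j_0$ carry $\ge_0$, and two distinguished branches $A$ and $B$ are left unspecified. On each of $A$ and $B$ I apply \cref{lem:reverting}, which on a single branch reads
\begin{equation*}
  (A \text{ is } \le) + (A \text{ is } \ge) = (A \text{ is } \varnothing) + (A \text{ is } =),
\end{equation*}
and I multiply the identities for $A$ and for $B$ together. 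Expanding both sides produces one scalar relation between sums $S(R \mid V^{\Delta})$.

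The next step is to translate each resulting term into the notation of the lemma, using two facts that are already available. First, by the symmetry noted at the start of \cref{sec:long_stars}, the quantity $S(R \mid V^{\Delta}_{i,j,k})$ depends only on the multiplicities $(i,j,k)$ and not on which branches carry which decoration, so terms obtained from one another by swapping $A$ and $B$ may be merged. Second, a branch constrained by an equality symbol factors out through \cref{lem:equalitylabel}, contributing exactly the factor $S_{=,0} = \sum_{\ell \ge 0} \Cat_\ell^2 t^{2\ell}$ and leaving the long star with that branch deleted. Under this dictionary the left-hand side becomes
\begin{equation*}
  S(R \mid V^{\Delta}_{i_0+2,j_0,0}) + 2\, S(R \mid V^{\Delta}_{i_0+1,j_0+1,0}) + S(R \mid V^{\Delta}_{i_0,j_0+2,0}),
\end{equation*}
while the right-hand side, with its $0$, $1$ and $2$ equality terms and matching powers of $S_{=,0}$, is precisely $X_p$. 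Writing $U_q = S(R \mid V^{\Delta}_{q,d-q,0})$ for $0 \le q \le d$ and using $i_0+2 = p+1$, $i_0+1 = p$, $i_0 = p-1$, this single relation is
\begin{equation*}
  U_{p-1} + 2\,U_p + U_{p+1} = X_p .
\end{equation*}

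It then remains to assemble these $d-1$ relations (for $p=1,\dots,d-1$) into the stated matrix form. The unknowns are the interior sums $U_1,\dots,U_{d-1}$, and the coefficients $(1,2,1)$ on $(U_{p-1},U_p,U_{p+1})$ give exactly the tridiagonal matrix. The only terms that are not unknowns are $U_0 = S(R \mid V^{\Delta}_{0,d,0})$, occurring solely in the $p=1$ equation, and $U_d = S(R \mid V^{\Delta}_{d,0,0})$, occurring solely in the $p=d-1$ equation; moving them to the right-hand side yields the subtracted boundary vector. For $d=2$ both boundary terms occur in the unique equation $2U_1 = X_1 - U_0 - U_2$, which is the displayed special case.

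I expect the only delicate point to be the bookkeeping of the translation step. One must check that factoring an equality branch via \cref{lem:equalitylabel} leaves the root decoration $\Delta$ genuinely unchanged; this holds because every branch vertex carries second coordinate $K=0$, so deleting a branch does not alter $\sum_{v' \le_T \rho} K_{v'}$. One must also verify that the ``$\varnothing$'' term of \cref{lem:reverting} increments the free-branch count $k$ rather than an inequality count. Once the dictionary $\{\le,\ge\}\to(i,j)$, $\varnothing \to k$, and ${=}\,\to$ (delete the branch, multiply by $S_{=,0}$) is set up cleanly, the expansion is mechanical and the $(1,2,1)$ pattern falls out as the binomial expansion of the product over the two branches $A$ and $B$.
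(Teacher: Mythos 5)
Your proof is correct and takes essentially the same route as the paper: the paper's own proof also obtains the relation $S(R\mid V^{\Delta}_{p+1,d-p-1,0})+2\,S(R\mid V^{\Delta}_{p,d-p,0})+S(R\mid V^{\Delta}_{p-1,d-p+1,0})=X_p$ by applying \cref{lem:reverting} once at each of two branches and factoring every equality branch through \cref{lem:equalitylabel} as a power of $S_{=,0}$, then assembles the $d-1$ relations into the tridiagonal system exactly as you do. The only difference is organizational: where you multiply the two one-branch indicator identities and expand the product in one stroke, the paper applies \cref{lem:reverting} twice in succession, which is the same computation.
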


We note that the matrix on the left-hand side of \cref{lem:linear_system} writes $2I-J$, where $I$ is the identity matrix
and $J$ has $1$s on the upper and lower diagonals, and 0s everywhere else. Since $J$ is an irreducible
non-negative matrix with row sums at most $2$ and some row sum smaller than $2$,
its spectral radius is strictly smaller than $2$. This implies that $2I-J$ is invertible.
Thus the lemma allows us to express the quantities $S(R|V^{\Delta}_{i,j,0})$ with $i,j>0,\, i+j=d$,
in terms of quantities on the right-hand side.

\begin{proof}[Proof of \cref{lem:linear_system}]
  Let $i,j>0$ with $i+j=d$. Using twice \cref{lem:reverting} (together with \cref{lem:equalitylabel}),
  we have
  \begin{multline*} S(R \mid V^{\Delta}_{i+1,j-1,0})+ 2 S(R \mid V^{\Delta}_{i,j,0}) + S(R \mid V^{\Delta}_{i-1,j+1,0})  \\
  = S (R \mid V^{\Delta}_{i,j-1,1}) + S (R \mid V^{\Delta}_{i-1,j,1}) + (S (R \mid V^{\Delta}_{i,j-1,0}) + S (R \mid V^{\Delta}_{i-1,j,0}) )\cdot S_{=,0}.
\end{multline*}
Using again \cref{lem:reverting} (and \cref{lem:equalitylabel}), this simplifies as
 \begin{multline*} S(R \mid V^{\Delta}_{i+1,j-1,0})+ 2 S(R \mid V^{\Delta}_{i,j,0}) + S(R \mid V^{\Delta}_{i-1,j+1,0})  \\
   = S (R \mid V^{\Delta}_{i-1,j-1,2}) + 2 S (R \mid V^{\Delta}_{i-1,j-1,1}) \cdot S_{=,0} + S (R \mid V^{\Delta}_{i-1,j-1,0}) \cdot S_{=,0}^2.
 \end{multline*}
 This proves the lemma.
\end{proof}

To conclude this section, let us note that \cref{lem:reverting} (together with \cref{lem:equalitylabel}) yields
\begin{equation}
\label{eq:V4}
S (R \mid V^{\Delta}_{i,j,1})=  S(R \mid V^{\Delta}_{i+1,j,0}) + S(R \mid V^{\Delta}_{i,j+1,0}) -
S (R \mid V^{\Delta}_{i,j,0}) \cdot S_{=,0},
\end{equation}
which expresses $ S (R \mid V^{\Delta}_{i,j,1})$ in terms of sums that we have already computed or associated with smaller trees.

\subsection{Long stars with an equality decoration}
We consider here the case where $R$ is trivial,
and $\Delta$ is of the form $(=,K)$ for some $K$ in $\Z$.
and we will use the above relations to prove \cref{thm:sum-pi-generalized}
in the case of long stars.
\begin{proposition}\label{prop:UV}
For any $i,j,k \ge 0$ and any $K$,
all of $S\big(U^{(=,K)}_{i,j,k}\big)$, $S\big(\overline{U}^{(=,K)}_{i,j,k}\big)$ and $S\big(V^{(=,K)}_{i,j,k}\big)$
belong to $\cA$. Moreover, we have the degree bounds
\[d_{S\big(U^{(=,K)}_{i,j,k}\big)} \le 2(i+j+k)+1, \quad d_{S\big(\overline{U}^{(=,K)}_{i,j,k}\big)} \le 2(i+j+k)+1,
\quad d_{S\big(V^{(=,K)}_{i,j,k}\big)} \le 2(i+j+k).\]
\end{proposition}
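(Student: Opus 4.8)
The plan is to prove the three membership statements and the three degree bounds simultaneously, by induction on the integer $d \coloneqq i+j+k$, with the inductive hypothesis quantified over all $K \in \Z$ and over all three families $U^{(=,K)}_{i',j',k'}$, $\overline{U}^{(=,K)}_{i',j',k'}$, $V^{(=,K)}_{i',j',k'}$ with $i'+j'+k' < d$. The base case $d=0$ is immediate: each degenerate tree is a single decorated vertex whose sum is either $0$ or a monomial $\Cat_{|K|} t^{|K|}$ (or $\One[K=0]$ for the gray root), which lies in $\cA$ with degree $0$, well within the announced bounds.

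For the inductive step I would settle the gray-rooted family $V$ first, since the other two families are reduced to it through \eqref{eq:reduc_U_k}. Within the $V$-family at level $d$, the genuinely new quantities are those with $k=0$: by \eqref{eq:V1} every $V^{(=,K)}_{i,j,k}$ with $k\ge 2$ is a $t^{-2}$-combination of strictly smaller trees, hence in $\cA$ by the inductive hypothesis, while \eqref{eq:V4} expresses the $k=1$ terms through $k=0$ terms of the same level. For the $k=0$ terms I would separate the two pure boundaries from the interior. The boundaries $V^{(=,K)}_{d,0,0}$ and $V^{(=,K)}_{0,d,0}$ are exactly where the equality decoration forces membership in $\cA$ rather than only $\cB$: writing $\delta_h = \ell_{w_h} - m_{b_h}$, the root equality reads $\sum_h \delta_h = K$ while each branch imposes $\delta_h \ge 0$ (resp.\ $\le 0$), so only finitely many tuples contribute and the sum factorizes as $\sum_{\delta_1+\dots+\delta_d=K}\prod_h S_{=,\delta_h}$, a finite combination of base-case sums; by \cref{prop:basecase} this lies in $\cA$ with degree $\le 2d$. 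The interior terms $V^{(=,K)}_{i,j,0}$ with $i,j>0$ are then recovered from \cref{lem:linear_system}: its right-hand side involves only these two boundary terms together with the quantities $X_i$, and after one application of \eqref{eq:V1} to the level-$d$ contribution $V^{(=,K)}_{\cdot,\cdot,2}$ inside $X_i$, every entry is expressed through trees of level $<d$, hence lies in $\cA$ by induction; since the matrix $2I-J$ is invertible over $\Q$, the solution stays in $\cA$.

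It remains to treat the white- and black-rooted stars at level $d$, which I would do once the $V$-family is completely known for all $K$. For $k\ge 1$, relation \eqref{eq:reduc_U_k} writes $U^{(=,K)}_{i,j,k}$ in terms of $V^{(=,K+1)}_{i,j,k}$ (already known at level $d$) and $\overline{U}^{(=,K+1)}_{i,j,k-1}$ (level $d-1$, by induction). For $k=0$ the relevant relations are the instances of \eqref{eq:reduc_U_ij} with $i+j=d-1$: these are $d$ equations relating the $d+1$ unknowns $U^{(=,K)}_{a,b,0}$ with $a+b=d$, whose right-hand sides reduce to the $k=1$ terms just handled and to level-$(d-1)$ terms. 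This system is by itself under-determined, and the key point is that exactly one boundary term can be computed directly: the pure star $U^{(=,K)}_{d,0,0}$ has root equality $\ell_v + \sum_h\delta_h = K$ with all variables nonnegative, so only finitely many tuples contribute and it factorizes into base-case sums, giving an element of $\cA$. Seeding the chain \eqref{eq:reduc_U_ij} with this value lets me solve for the remaining $U^{(=,K)}_{a,b,0}$ one after another, reaching in particular the opposite boundary $U^{(=,K)}_{0,d,0}$, whose direct sum is \emph{infinite} and could not be factorized. The black-rooted case is identical after exchanging the roles of the white and black variables (and of $K$ and $-K$), the directly computable seed now being $\overline{U}^{(=,K)}_{0,d,0}$.

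The main obstacle, beyond the routine check that each of these relations preserves the degree bounds $2(i+j+k)$ for $V$ and $2(i+j+k)+1$ for $U,\overline{U}$, is organizing the induction so as to avoid circularity: the same-level dependencies impose the order ``$V$ before $U,\overline{U}$'' and ``$k=0$ before $k=1$ within $V$'', and one must verify that the level-$d$ term $V^{(=,K)}_{\cdot,\cdot,2}$ entering $X_i$ collapses to strictly smaller trees through \eqref{eq:V1}. The subtler conceptual point is the asymmetry in the $U$-family: only one of the two pure boundaries yields a finite, factorizable sum, so the under-determined system \eqref{eq:reduc_U_ij} must be closed by that single computable seed and unwound toward the other boundary.
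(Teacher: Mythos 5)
Your proposal is correct and follows essentially the same route as the paper's proof: induction on $i+j+k$, treating the $V$-family first ($k\ge 2$ via \eqref{eq:V1}, the boundary cases $V^{(=,K)}_{d,0,0}$ and $V^{(=,K)}_{0,d,0}$ via the finite factorization into two-vertex sums from \cref{prop:basecase}, the interior $k=0$ cases via \cref{lem:linear_system}, then $k=1$ via \eqref{eq:V4}), and then the $U$- and $\overline{U}$-families via \eqref{eq:reduc_U_k} and \eqref{eq:reduc_U_ij} seeded by the finitely factorizable pure star. Your explicit observation that only one of the two pure boundaries of the $U$-family admits a finite factorization (and that \eqref{eq:reduc_U_ij} must therefore be unwound from that single seed) is left implicit in the paper but is fully consistent with its argument.
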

Note that the right-hand side in the above degree bounds is the number of non-gray vertices
in the corresponding tree, which is consistent with~\cref{thm:sum-pi-generalized}.
\begin{proof}
We prove it by induction on the size $|T|$ of a long star $T$, defined as
\begin{align*}
\left|U^{\Delta}_{i,j,k}\right|=\left|\overline{U}^{\Delta}_{i,j,k}\right|=\left|V^{\Delta}_{i,j,k}\right|=i+j+k.
\end{align*}
Long stars of size $0$ are restricted to a single vertex and the proposition holds trivially in this case.
Let $s\ge 0$.
Assume that the result holds for all long stars of size at most $s$,
and let $T$ be a long star of size $s+1$.
\smallskip

\emph{The $V$ family.}
Consider first the case $T=V^{(=,K)}_{i,j,k}$ with $i+j+k=s+1$. If $k \geq 2$, then \eqref{eq:V1} and the induction hypothesis imply that $S(T)$ is in $\cA$
and that the degree bound holds.

We now consider the tree $T=V^{(=,K)}_{s+1,0,0}$.
Denoting by $\rho$ the root of $T$,
the condition $(C^T_\rho)$ writes $\ell_{w_1}+\ldots+\ell_{w_{s+1}} = m_{b_1}+\ldots+m_{b_{s+1}}+K$. Hence, under the conditions $\ell_{w_i} \geq m_{b_i}$ for all $1 \leq i \leq s+1$, we have that $$\left(\ell_{w_1}-m_{b_1}, \ldots, \ell_{w_{s+1}}-m_{b_{s+1}}\right) \in E_{s+1,K} \coloneqq  \left\{ x_1, \ldots, x_{s+1} \in \Z_+, \sum_{j=1}^{s+1} x_j = K \right\}.$$
Note that $E_{s+1,K}$ is finite for all $K$. We can therefore write $S(T)$ as a finite sum over $\mathbf{x} \coloneqq  (x_1, \ldots, x_{s+1})$ in $E_{s+1,K}$:
\begin{align*}
S(T) = \sum_{\mathbf{x} \in E_{s+1,K}} S\left(  T, \bigwedge_{j=1}^{s+1} \left[\ell_{w_j}=m_{b_j}+x_j\right]\right),  
\end{align*}
where $S(T,C)$ is the same as $S(T)$ with the additional condition $C$ on the
vertices. Each of the (finitely many) terms $S(T,C)$ above 
is a product of factors of the form $S(T')$, where $T'$ is a tree with two vertices and an equality decoration at the root.
Hence, using \cref{prop:basecase}, we conclude that $S(T)=S(V^{(=,K)}_{s+1,0,0})$ is in $\cA$ and has degree at most $2(s+1)$.
Swapping the roles of $(\ell_{w_j})$ and $(m_{b_j})$, the same
holds for $V^{\Delta}_{0,s+1,0}$.

We now consider the case where $T=V^{(=,K)}_{i,j,0}$ where $i,j \geq 1$ and $i+j=s+1$. Here we use  \cref{lem:linear_system} along with \eqref{eq:V1}
and the induction hypothesis, which proves that $S(T)$ is in $\cA$
with the appropriate degree bound.
This completes the case of trees $V^{(=,K)}_{i,j,k}$ with $k=0$.

Finally, if $k=1$, we see from \eqref{eq:V4} and the results just above 
that $S(T)$ is also in $\cA$
with the appropriate degree bound.
In the end we have proved the desired statement for any long star of the $V$ family of size $s+1$.
\medskip

\emph{The $U$ family.}
We now consider a tree $T=U^{(=,K)}_{i,j,k}$ of size $|T|=s+1$. 
First, observe that if $k \geq 1$, we get directly from \eqref{eq:reduc_U_k} 
that $S(T)$ is in $\cA$
with the appropriate degree bound (using the proved statement for trees 
of the $V$ family of size $s+1$).

Now, consider $T=U^{(=,K)}_{s+1,0,0}$. 
The same argument as for the $V$ family above shows 
that $S(T)$ is in $\cA$ with the appropriate degree bound. 
Finally, if $T=U^{(=,K)}_{i,s+1-i,0}$ for $0 \leq i \leq s$, observe that \eqref{eq:reduc_U_ij} allows to write $S(T)$ as a linear combination of $S(U^{\Delta}_{s+1,0,0})$ and of some $S(T')$ where $T'$ is a long star in the $U$-family, either of size at most $s$,
or indexed by a triple $(i,j,k)$ with $k \ge 1$. Such expressions have already be proved to be in $\cA$ with appropriate degree bounds,
so this also holds for $S(T)$.
This shows that the proposition holds for all long stars of the $U$ family of size $s+1$.
\medskip

\emph{The $\overline{U}$ family.}
Finally, the family $\overline{U}$ is treated exactly the same way as the $U$ family.
This concludes our induction step, and the proposition is proved.
\end{proof}

\section{Proof of the main theorem}
\label{sec:proof_main_theorem}

We can now prove the generalized version of our main result, \cref{thm:sum-pi-generalized}. To simplify notation,
let us write $S(T) \in \cA/\cB$ to mean that $S(T)$ is in $\cA$ if its root is decorated by an equality symbol, and in $\cB$ otherwise.

\cref{thm:sum-pi-generalized} will be proved by induction on the so-called \emph{total path length} of the tree $T$, defined as 
\begin{align*}
\cL(T) \coloneqq  \sum_{v \in T} d(\rho, v),
\end{align*}
where we recall that $\rho$ is the root of $T$ and $d$ the graph distance on $T$.

In the induction step, we shall decompose our tree thanks to the relations obtained in Lemmas \ref{lem:root_varnothing}-\ref{lem:consecutive_varnothing}. Let us start with the following reduction. For clarity, we postpone all degree discussions to the end of the proof.

\begin{definition}\label{def:goodtrees}
We say that a decorated tree $T$ is \textit{good} if it has the following properties:
\begin{itemize}
\item[(i)] No vertex has label $=$, except possibly the root $\rho$;
\item[(ii)] If a vertex $v$ is such that $\bowtie_v \in \{ \geq, \leq \}$, then $K_v=0$;
\item[(iii)] There are no gray leaves, and all leaves have decoration $\varnothing_0$;
\item[(iv)] There are no pairs of leaves with the same parent and the same color;
\item[(v)] No leaf has a parent of its color;
\item[(vi)] No leaf has a gray parent.
\end{itemize}
\end{definition}

The interest of this definition lies in the following result.

\begin{proposition}
\label{prop:goodtrees}
Fix $k \ge 0$. Assume that $S(T') \in \cA/\cB$ for
\begin{itemize}
  \item any tree $T'$ with $\cL(T') \le k$;
  \item any good tree $T'$ with $\cL(T') \le k+1$.
\end{itemize}
Then $S(T) \in  \cA/\cB$
for all trees $T$ with $\cL(T) \le k+1$.
\end{proposition}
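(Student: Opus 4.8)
The plan is to prove \cref{prop:goodtrees} as a terminating rewriting procedure: starting from an arbitrary $T$ with $\cL(T)\le k+1$, I would use the relations of \cref{lem:root_varnothing,lem:equalitylabel,lem:varnothinglabel,lem:shift_k,lem:reverting,lem:leaf,lem:twin_varnothing,lem:consecutive_varnothing} to express $S(T)$ as a combination of products of sums $S(T')$, in which every factor $T'$ is either a tree with $\cL(T')\le k$ or a \emph{good} tree with $\cL(T')=k+1$. If $\cL(T)\le k$ there is nothing to do (first hypothesis), so the only case to treat is $\cL(T)=k+1$. Since $\cA$ and $\cB$ are algebras with $\cA\subseteq\cB$, and since none of the quoted relations alters the decoration of the root of the factor carrying it (which is what decides membership in $\cA$ versus $\cB$), such a combination lies in $\cA/\cB$ as soon as every factor does, by the two hypotheses.

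The heart of the argument is to match each violation of \cref{def:goodtrees} to the relation that removes it, recording whether that relation lowers $\cL$ or merely preserves it. I would check the following. A violation of (iv), (v) or (vi) is cleared by \cref{lem:twin_varnothing,lem:consecutive_varnothing,lem:varnothinglabel} respectively; each merges two vertices via the quadratic Catalan recurrence, hence deletes a vertex at positive height and strictly lowers $\cL$, producing only trees with $\cL\le k$. A nonroot equality vertex $v$ (violation of (i)) is treated with \cref{lem:equalitylabel}: if $v$ is internal, the factorisation $S(T)=S(U)\,S(V)$ satisfies $\cL(U)+\cL(V)=\cL(T)-s\cdot d(\rho,v)<\cL(T)$, where $s\ge 1$ is the number of strict descendants of $v$, so both factors have $\cL\le k$; if $v$ is a leaf, the same lemma merely replaces the equality leaf by a void leaf (times a constant $\Cat_{K_v}t^{K_v}$), preserving $\cL$. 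A value $K_v\ne 0$ at an inequality vertex (violation of (ii)) is removed by \cref{lem:shift_k}, which shifts $K_v$ one step towards $0$ at the cost of an equality term at $v$, the latter being resolved as just described. Finally, the leaf decorations in (iii) are normalised with \cref{lem:leaf}: a $(\ge,0)$-leaf becomes a void leaf (preserving $\cL$), while a $(\le,0)$-leaf or a gray leaf forces its variable and is factored out (lowering $\cL$).

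For termination I would restrict attention to the terms with $\cL=k+1$ and equip them with the lexicographic complexity $\Phi(T)=\big(N_=(T),\,N_K(T),\,N_\ell(T)\big)\in\mathbb N^3$, where $N_=$ is the number of nonroot equality vertices, $N_K=\sum_{v:\,\bowtie_v\in\{\le,\ge\}}|K_v|$, and $N_\ell$ is the number of leaves not decorated $\varnothing_0$. Every $\cL$-lowering step sends its entire output into the pile of trees with $\cL\le k$, while every $\cL$-preserving step strictly decreases $\Phi$ on each term it produces: voiding an equality leaf drops $N_=$; shifting a $K_v$ one step towards $0$ drops $N_K$ without touching $N_=$ (its equality remainder being immediately factorised or leaf-voided); and voiding a $(\ge,0)$-leaf drops $N_\ell$ without touching $N_=$ or $N_K$. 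Hence the multiset of $\Phi$-values of the surviving $\cL=k+1$ terms strictly decreases in the (well-founded) multiset ordering, so the procedure halts. When it halts, no relation applies to any remaining $\cL=k+1$ term, which means such a term violates none of (i)--(vi), i.e.\ it is good; by the second hypothesis its sum lies in $\cA/\cB$, completing the proof.

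The main obstacle I anticipate is exactly this termination bookkeeping: the transformations interfere with one another (shifting $K$ spawns equalities, normalising one leaf can expose a twin, a same-colour parent, or a gray parent, and so on), so the delicate point is to order the operations and weight the components of $\Phi$ so that no $\cL$-preserving step ever reintroduces, at the same level $\cL=k+1$, a defect already counted with higher priority. A secondary but genuine point to verify carefully is the $\cA$-versus-$\cB$ accounting across the products generated by \cref{lem:equalitylabel}, namely that the factor inheriting the original root keeps its root decoration, so that the product lands in the algebra predicted by the root of $T$.
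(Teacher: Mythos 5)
Your overall architecture is the same as the paper's: match each violation of \cref{def:goodtrees} to the lemma that removes it (\cref{lem:equalitylabel} for (i), \cref{lem:shift_k} for (ii), \cref{lem:leaf,lem:varnothinglabel} for (iii), \cref{lem:twin_varnothing,lem:consecutive_varnothing} for (iv)--(v), the second part of \cref{lem:varnothinglabel} for (vi)), observe that each step either strictly lowers $\cL$ (so the output falls under the first hypothesis) or preserves $\cL$ while decreasing an auxiliary counter, and conclude that any surviving term at level $k+1$ must be good. The paper phrases the auxiliary decreases as ``immediate inductions''; you make them explicit through the measure $\Phi=(N_=,N_K,N_\ell)$.

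The gap is that $\Phi$ as you define it does not decrease, precisely at the interference point you flagged. \cref{lem:shift_k} does not only change $K_v$: to keep the conditions $C_z$ at the ancestors $z$ of $v$ unchanged, it compensates by altering the parent's constant by $\mp 1$ (this is why the lemma mentions ``the modification of the decoration of the parent vertex''). If the parent is itself inequality-decorated --- say parent $(\ge,0)$, child $(\ge,1)$ --- one shift produces, besides the factorized equality remainder of smaller $\cL$, a surviving $\cL$-preserving term with child $(\ge,0)$ and parent $(\ge,1)$: then $N_=$, $N_K$, $N_\ell$ are all unchanged and your multiset argument stalls. A similar problem occurs for a leaf decorated $(\varnothing,K)$ with $K\ne 0$, a case of item (iii) missing from your list: by \cref{lem:varnothinglabel} its constant is pushed onto the parent, which can raise $N_K$ while lowering $N_\ell$, again not a decrease in your lexicographic order. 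The repair is to note that nonzero constants migrate toward the root, where \cref{lem:shift_k} applies with no compensation, so a depth-weighted count such as $\sum_{v:\,\bowtie_v\in\{\le,\ge\}}|K_v|\,\bigl(1+d(\rho,v)\bigr)$ does strictly decrease (the paper's own claim that the plain sum of $|K_v|$ decreases is loose on exactly this point). Separately, your reading of \cref{lem:equalitylabel} at an equality leaf is inaccurate but harmless: the lemma splits off the fringe subtree at $v$ entirely, so every nonroot equality vertex, leaf or not, is removed by an $\cL$-lowering factorization, and $N_=$ need not appear in the measure at all. With these repairs your argument goes through and coincides with the paper's proof.
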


\begin{proof}
Let $T$ be a tree with $\cL(T) = k+1$, we want to prove that $S(T) \in  \cA/\cB$.
Since we assume that this holds for any tree $T'$ with $\cL(T') \le k$, and for any good tree with $\cL(T') \le k+1$, it suffices to consider the case when $T$ is not a good tree and $\cL(T) = k+1$. Not being a good tree means that $T$ violates one of the items (i) to (vi) of \cref{def:goodtrees}. Let us consider these items one by one.

By \cref{lem:equalitylabel}, if $T$ violates item (i) (i.e.~a nonroot vertex has label $=$), then $S(T)$ can be written as $S(T)=S(U)\cdot S(V)$ with $\cL(U) \leq k, \, \cL(V) \leq k$, and $S(T)$ is in $\cA/\cB$.

Now assume that there is a vertex $v$ in $T$ with label $(\bowtie_v,K_v)$, where $\bowtie_v \in \{ \geq, \leq \}$ and $K_v \neq 0$. Then, using \cref{lem:shift_k} and again \cref{lem:equalitylabel}, we can write $S(T) = S(U) + S(V) \cdot S(W)$ or $S(T)=S(U)-S(V) \cdot S(W)$, where $V,W$ have smaller total path lengths than $T$,
$U$ has the same total path length as $T$,
but the sum in $U$ of $|K_v|$
over its $\{\le,\ge\}$-decorated vertices $v$ is smaller than that in $T$.
By an immediate induction, we can assume that all $\{\le,\ge\}$-decorated vertices $v$ in $T$ satisfy $K_v=0$, i.e.~that $T$ satisfies item (ii).

By the last equality of \cref{lem:leaf}, if $T$ has a gray leaf then either $S(T)=0$ or $S(T)=S(T')$ for $T'$ such that $\cL(T')=k$. 
In both cases, we can conclude that $S(T)$ is in $\cA/\cB$.
So we assume that $T$ has no gray leaves. If $T$ contains a leaf with a decoration different from $\varnothing_0$,
then we can use~\cref{lem:varnothinglabel} (if the decoration is $(\varnothing,K)$, $K\ne 0$) or \cref{lem:leaf} (if the decoration is $\le_0$ or $\ge_0$)
to write $S(T)=S(T')$, where $T'$
has the same total path length has $T$ but fewer leaves with a decoration different from $\varnothing_0$, or violates item (i). Again, by an immediate induction,
we can assume that all leaves of $T$ are decorated with $\varnothing_0$,
i.e.~that $T$ satisfies item (iii).

Now, if (iv) or (v) does not hold, then, recalling that all leaves in $T$ have decoration $\varnothing_0$ and are non-gray,
by \cref{lem:twin_varnothing} or \cref{lem:consecutive_varnothing} we can write $S(T)=t^{-1}S(U)-t^{-1}S(V)$ 
where $U$ and $V$ have smaller total path length than $T$.

Finally, if (vi) does not hold, by the second part of \cref{lem:varnothinglabel} we can write $S(T)=S(U)$ where $U$ has a smaller total path length as $T$.
In all cases, we conclude by induction that $S(T)$ is in $\cA/\cB$,
proving the proposition.
\end{proof}

\cref{prop:goodtrees} shows that we only need to consider good trees. We now investigate in more detail the structure of a good tree. Using a standard terminology in the domain, we call \textit{fringe subtree} of $T$ a subtree of $T$ made of one vertex and all its descendants.

\begin{lemma}
\label{lem:fringe_subtrees}
Let $T$ be a good tree. Then:
\begin{itemize}
\item[(i)] any fringe subtree of height $1$ of $T$ is either a black vertex with a single white child, or a white vertex with a single black child.
\item[(ii)] a fringe subtree $U$ of height $2$ of $T$ is either:
\begin{itemize}
\item[(a)] a gray vertex $v$, such that all subtrees rooted at children of $v$ are as in (i);
\item[(b)] a white vertex $v$, potentially with one black leaf child
  and such that other  subtrees rooted at children of $v$ are as in (i)
  (since $U$ has height $2$, $v$ has at least one non-leaf child); 
\item[(c)] same as (ii b), swapping black and white.
\end{itemize}
\end{itemize} 
\end{lemma}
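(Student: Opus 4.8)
The plan is to derive both parts directly from the defining constraints (iii)--(vi) of a good tree in \cref{def:goodtrees}, via a short case analysis on the color of the apex (root) of the fringe subtree under consideration. Part (i) will feed into the proof of part (ii).

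First I would treat part (i). Let $v$ be the root of a fringe subtree of height $1$, so that every child of $v$ is a leaf. By property (iii) these leaves are non-gray and carry the decoration $\varnothing_0$; in particular each child is black or white. By property (vi), $v$ itself cannot be gray (a leaf never has a gray parent), so $v$ is black or white. By property (v), each (leaf) child has the color opposite to that of $v$, so all children share the single color opposite to $v$. Finally property (iv) forbids two leaves with the same parent and the same color, so $v$ has exactly one child — it has at least one, being of height $1$ — and that child has the opposite color. This yields precisely the two configurations claimed: a black vertex with a single white child, or a white vertex with a single black child.

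Next I would handle part (ii). Let $v$ be the root of a fringe subtree $U$ of height $2$. The key bookkeeping observation is that, since every descendant of $v$ lies at distance at most $2$ from $v$, each child of $v$ roots a fringe subtree of height at most $1$; and because $U$ has height exactly $2$, at least one child roots a fringe subtree of height exactly $1$ (equivalently, at least one child is internal). I then split according to the color of $v$. If $v$ is gray, property (vi) forbids leaf children, so every child of $v$ is internal and hence roots a fringe subtree of height exactly $1$; by part (i) each such subtree is of the stated form, giving case (a). If $v$ is white, then by properties (v) and (iii) any leaf child of $v$ must be black, and by property (iv) there can be at most one such leaf child; all remaining children are internal, so root height-$1$ fringe subtrees which by part (i) are of the form described in (i). This is exactly case (b), and the existence of at least one internal child is forced by $U$ having height $2$. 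The case of $v$ black is symmetric, giving case (c).

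Since the argument is purely a matter of tracking the constraints (iii)--(vi), no serious difficulty arises. The only point demanding mild care, and the one I would state explicitly, is the height argument in part (ii): a fringe subtree of height $2$ forces every child of the apex to root a subtree of height at most $1$, with at least one of height exactly $1$, so that part (i) applies verbatim to the internal children and the color/multiplicity restrictions on the possible leaf child follow cleanly from (iv) and (v).
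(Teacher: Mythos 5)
Your proof is correct and follows essentially the same route as the paper's: a case analysis on the color of the root of the fringe subtree, using items (iii)--(vi) of \cref{def:goodtrees}, with part (i) then applied to the internal children in part (ii). The only difference is that you spell out the deduction of part (i) in full (the paper simply states it follows from items (iii)--(vi)), which is a harmless elaboration.
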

\begin{proof}
Item (i) directly follows from Definition \ref{def:goodtrees}, items (iii) to (vi). 
Let us prove (ii). Let $U$ be a fringe subtree of $T$ of height $2$. If the root of $U$ is gray, then it cannot have a leaf as a child by \cref{def:goodtrees} (vi). Hence all its children are roots of fringe subtrees of height $1$, proving (ii,a). If the root $v$ of $U$ is white, then it can have a black leaf as a child, but not more than one and it cannot have a leaf of another color as child, by \cref{def:goodtrees}, items (iii) to (v). All other subtrees rooted at children of $v$ are fringe subtrees of height $1$ and hence, are as described in (i). This proves (ii,b), and (ii,c) is proved the same way, swapping black and white.
\end{proof}

We now have all the tools needed to prove \cref{thm:sum-pi-generalized}.

\begin{proof}[Proof of \cref{thm:sum-pi-generalized}]
As said above, we proceed by induction on the total path length $k$ of $T$.
If $T$ has total path length $0$, i.e. is restricted to a single vertex, then $S(T)$ is in $\mathbb Q[t,C(t)]$.
Indeed, in that case, either $S(T)$ is a polynomial in $t$,
 or $\sum_{\ell \ge 0} \Cat_{\ell} t^{\ell}  - S(T)=  C(t) - S(T)$ is a polynomial in $t$.
 Now, let $k \ge 0$, and assume that for any decorated tree $T'$ with $\cL(T') \le k$,
 we have $S(T') \in \cA/\cB$. 
 We want to prove that for any $T$ with $\cL(T) = k+1$,
 the sum $S(T)$ lies in $\cA/\cB$. 
 From \cref{prop:goodtrees}, it is enough to prove it for good trees $T$.
  
 Let us consider a good tree $T$ with $\cL(T) = k+1$.
By~\cref{lem:fringe_subtrees} (i),
if $T$ has height $1$, then $T$ has exactly $2$ vertices, 
and the result is a consequence of~\cref{prop:basecase}.
Assume now that $T$ has height at least $2$,
and let $v$ be a vertex of $T$ such that the fringe subtree rooted at $v$ has height exactly $2$.
We can assume that $v$ is decorated either
 with $(\ge,0)$ or with $(\le,0)$. Indeed,
 \begin{itemize}
 \item Since $T$ is a good tree, an equality decoration can only be found if $v$ is the root of $T$, and this case has already been treated, see \cref{prop:UV}.
 \item If $v$ is decorated by $(\varnothing,K)$, then $S(T)$ can be expressed
 in terms of sums associated to smaller trees, either by~\cref{lem:root_varnothing} or~\cref{lem:varnothinglabel} 
(depending on whether $v$ is the root or not).
\item Recall that, in good trees, decorations $(\ge,K)$ and $(\le,K)$ with $K \ne 0$,
are forbidden (\cref{def:goodtrees}, item (ii)).
 \end{itemize}
By  \cref{lem:fringe_subtrees} (ii), this fringe subtree can be of three different kinds
and we will treat each case separately.
\smallskip

{\em Case (a) --} $v$ is a gray vertex, 
and all subtrees attached to $v$ consist of exactly one black and one white vertex.
Without loss of generality, we assume that, in each branch the white vertex is the parent and
the black vertex is the leaf (switching the colors of the black and the white vertices in a branch does not change the associated sum, nor the total path length of the tree).
With the notation of \cref{sec:long_stars}, this means that $T$ writes as $R \mid V^{\Delta}_{i,j,k}$
for some $R$ and some $i,j,k \ge 0$ and $\Delta \in \{\le_0,\ge_0\}$.

If $k \ge 2$, \cref{eq:V1} expresses $S(T)$ in terms of $S(T')$ with $\cL(T') <  \cL(T)$,
proving that $S(T) \in \cA/\cB$. 
If $(i,j,k) = (d,0,0)$ or $(i,j,k) = (0,d,0)$, then \cref{eq:V2,eq:V3}
implies that $S(T)$ belongs to $\cA/\cB$.
Consider now the case where $k=0$, but $i,j>0$.
We use \cref{lem:linear_system} with $d=i+j$.
 Then $S(T)=S(R \mid V^{\Delta}_{i,j,0})$ is one of the components
 of the vector on the left-hand side. But all quantities on the right-hand side
 have been assumed or proved to belong to $\cA/\cB$:
 they are either of the form $S(T')$ with $\cL(T') <  \cL(T)$, or 
 of the form $S(R \mid V^{\Delta}_{d,0,0})$, $S(R \mid V^{\Delta}_{0,d,0})$, $S(R \mid V^{\Delta}_{i',j',2})$ with $i'+j'+2=i+j$, all these trees having the same total path length as $T$
 and having already been treated.
 
 It remains to consider the case $k=1$. Then \eqref{eq:V4}
 expresses $S(T)$ in terms of some $S(T')$, with $T'$ of smaller total path length ($S(R \mid V^{\Delta}_{i,j,0})$) or already proved to be in  $\cA/\cB$
 ($S(R \mid V^{\Delta}_{i,j+1,0})$ and $S(R \mid V^{\Delta}_{i+1,j,0})$).
 \smallskip
 
 {\em Case (b1) --}
$v$ is a white vertex and all subtrees attached to $v$ 
consist of exactly one black and one white vertex.
As before, one may assume that we have only black leaves, by switching if necessary
the colors of the white and the black vertices of a given branch -- this changes neither $S(T)$, nor $\cL(T)$.
Then $T$ writes as $R \mid U^{\Delta}_{i,j,k}$
for some $R$ and some $i,j,k \ge 0$.
 Recalling that $v$ is decorated either with $(\ge,0)$ or with $(\le,0)$, \cref{eq:reduc_U_k,eq:reduc_U_ij,eq:U1,eq:U2} 
allows to express $S(T)$ in terms of $S(T')$ which are assumed or already proved to belong to  $\cA/\cB$, implying that $S(T)$ itself is in $\cA/\cB$.
\smallskip

 {\em Case (b2) --} $v$ is a white vertex, there is exactly one black leaf attached to $v$ and all other subtrees attached to $v$ 
consist of exactly one black and one white vertex.
 Again, one may assume that we have only black leaves.
 Then, using the second part of \cref{lem:varnothinglabel} in the unusual direction
  to \enquote{pull down} $v$ and its black leaf, 
 we have that $S(T)=S(T_2)$, where $T_2$ is of the form $R \mid V^{\Delta}_{i,j,k}$ with $k \ge 1$.
 This operation slightly increases the total path length, namely we have
 $\cL(T_2)=\cL(T)+d(\rho,v)+2$.
 
  We then use the same argument as in Case (a)
 to write $S(T_2)$ in terms of trees with smaller total path length.
 One can check that in these reductions, all trees $T'$ that appear
 satisfy $\cL(T') \le \cL(T_2)-d(\rho,v)-3$
  (reducing $i+j+k$ by $1$ in a long star
 reduces the total path length by $3$).
 In particular, all trees $T'$ in the reduction satisfy $\cL(T')< \cL(T)$,
 and we can use the induction hypothesis, proving that  $S(T)=S(T_2)$ belongs to 
 $\cA/\cB$ in this case as well.
 \smallskip
 
The case (c) of \cref{lem:fringe_subtrees} (ii) is symmetric to (b), and hence treated similarly. This completes the induction, proving that $S(T)$ belongs to 
 $\cA/\cB$ for any decorated tree $T$.
 \medskip

We finally prove the degree bound $d_{S(T)} \le \tilde V_T$ on these sums. 
Let us denote (DB) this bound.
In order to show it, it is enough to prove that all the results used throughout the proof are compatible with these bounds.

\begin{itemize}
\item \cref{prop:basecase} considers trees $T$ with $2$ vertices, and shows that $d_{S(T)} \le 2$. Hence these trees satisfy (DB).
\item In Lemmas \ref{lem:root_varnothing}, \ref{lem:equalitylabel} and \ref{lem:varnothinglabel}, using the notation of these lemmas, if $U,V,W$ satisfy (DB), then the trees on the left-hand sides also satisfy (DB).
\item In each of the relations shown in Lemmas \ref{lem:shift_k}, \ref{lem:reverting} and \ref{lem:leaf}, if all of the involved trees but one satisfy (DB), then the last tree also clearly satisfies (DB).
\item In Lemmas \ref{lem:twin_varnothing} and \ref{lem:consecutive_varnothing}, since the number of non-gray vertices in the tree $T_1$ on the left-hand side is strictly larger than the in the trees $T_2,T_3$ on the right-hand side, if $T_2,T_3$ satisfy (DB) then it is also the case for $T_1$.
\item Equations \eqref{eq:reduc_U_k}, \eqref{eq:U1} and \eqref{eq:V2} concern trees with the exact same number of non-gray vertices. Hence, if all of the involved trees but one satisfy (DB), then the last tree also satisfies it.  
\item Recalling that $d_{S_{=,0}}=2$, if all trees involved in Equations \eqref{eq:reduc_U_ij}, \eqref{eq:U2},\eqref{eq:V3} or \eqref{eq:V4} except one satisfy (DB), so does the last tree. 
\item In \eqref{eq:V1}, if the trees $V_{i,j,k-1}^{\Delta}, V_{i,j,k-2}^{\Delta}, U_{i,j,k-2}^{\Delta}, \overline{U}_{i,j,k-2}^{\Delta}$ satisfy (DB), then so does $V_{i,j,k}^{\Delta}$.
\item Finally, in~\cref{lem:linear_system}, if all trees appearing in the right-hand side satisfy (DB), then the trees on the left-hand side satisfy (DB) as well.
\end{itemize}
This allows to prove by induction that all decorated trees satisfy (DB). This ends the proof of the theorem.
\end{proof}

We finally show that the degree bound in \cref{thm:sum-pi-generalized} is tight,
as announced in the introduction.
\begin{proposition}
\label{prop:long stars maximize the degree}
For any $n \geq 0$, there exists a decorated tree $T_n$ with $n$ non-gray vertices
such that $d_{S(T_n)}=n$ and $S(T_n)(\frac14)$ is a polynomial in $1/\pi$ of degree $\lfloor n/2 \rfloor$.
\end{proposition}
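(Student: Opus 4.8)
The plan is to exhibit, for each parity of $n$, an explicit long star saturating the bound $d_{S(T)} \le \tilde V_T$ of \cref{thm:sum-pi-generalized}. For $n = 2d$ even I would take $T_n = V^{(=,0)}_{d,0,0}$, the gray-rooted long star with $d$ length-one branches, whose root condition reads $\sum_i \ell_{w_i} = \sum_i m_{b_i}$ while each branch imposes $\ell_{w_i} \ge m_{b_i}$. Exactly as in the case $K=0$ of the proof of \cref{prop:UV} (where $E_{d,0} = \{\mathbf{0}\}$), setting $x_i = \ell_{w_i} - m_{b_i}$ forces all $x_i = 0$, yielding $S(T_n) = (S_{=,0})^d$. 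For $n = 2d+1$ odd I would take $T_n = U^{\ge,0}_{d,0,0}$, the white-rooted long star with $d$ branches; by \eqref{eq:U1} this equals $S(U^{\varnothing,0}_{d,0,0})$, and \cref{lem:root_varnothing} then factorizes the sum across the root to give $S(T_n) = C(t)\,(S_{\ge,0})^d$, where $C(t) = \sum_{\ell\ge 0}\Cat_\ell t^\ell = \frac{1-\sqrt{1-4t}}{2t}$ is the Catalan generating series.

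Next I would read off the leading terms. Recall from \cref{prop:basecase} that $S_{=,0} = \frac{1}{4t^2}(H_1-1)$ and, from \cref{ssec:ex_P2h}, that $S_{\ge,0} = \frac{1}{8t^2}\big((1-\sqrt{1-4t})^2 + H_1 - 1\big)$. Thus $(S_{=,0})^d = (4t^2)^{-d}(H_1-1)^d$ has top-degree part $(4t^2)^{-d}H_1^d$ of degree $2d$, with no $\sqrt{1-4t}$ contribution, while a short computation shows that the product $C(t)(S_{\ge,0})^d$ has top-degree part proportional to $t^{-(2d+1)}\,H_1^d\,\sqrt{1-4t}$ of degree $2d+1$ (the extra degree coming from the $-\frac{1}{2t}\sqrt{1-4t}$ summand of $C(t)$). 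Since $\tilde V_{T_n} = n$ in both cases, \cref{thm:sum-pi-generalized} already gives the upper bound $d_{S(T_n)} \le n$, so the crux is the matching lower bound: I must argue that these leading terms cannot be cancelled or rewritten in lower degree.

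This last point is the only delicate step, and it is where the algebraic independence of $H_1$ and $H_2$ over $\mathbb{C}(t)$ (\cref{appendix:alg_independence}) enters, together with the fact that $\sqrt{1-4t}$ is algebraic of degree $2$ over $\Q(t)$. Together these make $\cB$ a free rank-two module over the polynomial algebra $\cA = \Q[t,t^{-1}][H_1,H_2]$, so every element admits a unique normal form $P(H_1,H_2) + Q(H_1,H_2)\sqrt{1-4t}$ with Laurent-polynomial coefficients; the filtration degree is then unambiguously determined by the leading monomials of $P$ and of $Q\sqrt{1-4t}$. For $T_n$ with $n$ even the monomial $H_1^d$ survives in $P$ (with $Q=0$), and for $n$ odd the monomial $H_1^d$ survives in $Q$, so in both cases $d_{S(T_n)} = n$ exactly. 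This is the main obstacle; the rest is routine bookkeeping.

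Finally I would specialize at $t = 1/4$. Since evaluation is a ring homomorphism and $\sqrt{1-4t}\big|_{1/4} = 0$ (hence $C(1/4) = 2$), using $S_{=,0}(1/4) = \frac{16}{\pi} - 4$ and $S_{\ge,0}(1/4) = \frac{8}{\pi}$ (from \cref{ssec:ex_P2,ssec:ex_P2h} or \cref{prop:basecase}) one obtains $S(T_n)(\tfrac14) = \big(\tfrac{16}{\pi} - 4\big)^{n/2}$ for $n$ even and $2\,\big(\tfrac{8}{\pi}\big)^{(n-1)/2}$ for $n$ odd. Both are polynomials in $1/\pi$ whose coefficient of $\pi^{-\lfloor n/2\rfloor}$ is nonzero, so each has degree exactly $\lfloor n/2\rfloor$, as required. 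The degenerate case $n = 0$ is handled by taking $T_0$ to be a single gray vertex, for which $S(T_0) \equiv 1$.
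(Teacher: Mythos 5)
Your proof is correct and follows essentially the same route as the paper: the same long-star witnesses (a $V_{d,0,0}$-type star whose sum is $(S_{=,0})^d$ for even $n$, and $U^{(\geq,0)}_{d,0,0}$ for odd $n$, handled via \eqref{eq:U1} and \cref{lem:root_varnothing}), followed by specialization at $t=\tfrac14$. Two remarks: your normal-form argument ($\cB$ free of rank two over $\cA$, with the degree read off the leading monomials of $P$ and of $Q\sqrt{1-4t}$) makes explicit the exact-degree lower bound that the paper merely asserts, relying implicitly on \cref{appendix:alg_independence}; and your expression $C(t)\,(S_{\geq,0})^d$ for the odd case is the correct evaluation --- the paper's own proof writes $\frac{1-\sqrt{1-4t}}{2t}\,(S_{=,0})^d$ and $S_{=,0}(\tfrac14)=4-\tfrac{4}{\pi}$, which are slips (of the factor $S_{\geq,0}$ versus $S_{=,0}$, and of sign) that do not affect the conclusion, since either expression is a polynomial of degree exactly $d$ in $1/\pi$ at $t=\tfrac14$.
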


\begin{proof}
First assume that $n$ is even, namely $n=2d$. Then, $V_{d,0,0}^{(\leq,0)}$ has $2d$ non-gray vertices, and by \eqref{eq:V3}:
\begin{align*}
S(V_{d,0,0}^{(\leq,0)}) &= (S_{=,0})^d.
\end{align*}
By Section \ref{ssec:base_case}, $S_{=,0}=\frac{1}{4t^2}\left( -1+ {}_{2}^{}{{}{{}{F_{1}^{}}}}\big(-\tfrac12,-\tfrac12; 1; 16t^2\big)\right)$, and hence has degree $2$.
We conclude that $S(T_n)$ has degree $2d$, as wanted.
Also $S_{=,0}(\frac14)=4-\frac4{\pi}$, so that $S(V_{d,0,0}^{(\leq,0)})(\frac14)$
is a polynomial in $1/\pi$ of degree $d$.

On the other hand, if $n=2d+1$, we consider the tree $U_{d,0,0}^{(\geq,0)}$ 
which has $2d+1$ non-gray vertices.
 By \eqref{eq:U1} and \cref{lem:root_varnothing}, we have $S(U_{d,0,0}^{(\leq,0)})=\frac{1-\sqrt{1-4t}}{2t} \cdot (S_{=,0})^d$, which has degree~$2d+1$.
 At $t=1/4$, we get $S(U_{d,0,0}^{(\leq,0)})(\frac14)=2 \, S_{=,0}(\frac14)^d$,
 which is a polynomial in $1/\pi$ of degree $d$.
\end{proof}

\section{The special case of stars}
\label{sec:stars}

For all $s \geq 0$, let us consider the tree $T_s$ of height $1$ with a white root decorated with $(=,0)$
and $s$ black leaves decorated with $(\varnothing,0)$.
We consider the corresponding sum, evaluated at $1/4$,
 \begin{equation}\label{eq:basic}
	A_{s} \coloneqq  S(T_s)(1/4) =
\sum_{m_1,\dots,m_s \geq 0}
\Cat_{m_1}
\cdots
\Cat_{m_s}
\Cat_{m_1 + \cdots + m_s}
{16}^{-m_1 - \cdots - m_s}.
\end{equation}
We will prove using hypergeometric functions that, for all $s \geq 0$,
$A_s$ is a {\em linear} polynomial in $1/\pi$, and derive compact explicit formulas
for it (see \cref{thm:stars} and \cref{rk:more_As} below).
The arguments employed in this section are mostly independent from the rest of the paper
(our general approach would lead to a more complicated recursion than the one presented here).
We focus here on the evaluation at $t=1/4$ of the sum $S(T_s)(t)$,
since the power series $S(T_s)(t)$ itself does not seem to admit as compact formulas as its evaluation,
see Remark~\ref{rmk:stars_series_pas_si_simple}.
\medskip 

For any two power series $A(t) = \sum_{n \geq 0} a_n t^n$ and $B(t) = \sum_{n \geq 0} b_n t^n$, we denote by $A\odot B$ their Hadamard product 
$(A\odot B) (t) \coloneqq \sum_{n \geq 0} a_n b_n t^n$. The first result directly follows from the definition.

\begin{lemma}\label{lem:Hadamard}
Let $C(t) \coloneqq  \sum_{n \geq 0} \Cat_n t^n$ be the generating function of the Catalan sequence $(\Cat_n)_n$.
Then 
$A_s = (C\odot C^s) (1/16)$ for all $s\geq 0$.
\end{lemma}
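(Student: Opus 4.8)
The plan is to prove the identity by unfolding both sides into explicit sums over $\Z_+^s$ and checking that they coincide term by term; no hard analysis is involved, and the statement is essentially a bookkeeping exercise in generating-function coefficients. First I would compute the Taylor coefficients of $C^s$: since $C(t) = \sum_{n \ge 0} \Cat_n t^n$, the $s$-fold product satisfies
\[
[t^n]\, C^s(t) = \sum_{\substack{m_1, \dots, m_s \ge 0 \\ m_1 + \cdots + m_s = n}} \Cat_{m_1} \cdots \Cat_{m_s},
\]
the $s$-fold Catalan convolution. By the definition of the Hadamard product, the coefficient of $t^n$ in $C \odot C^s$ is then $\Cat_n$ times this convolution.

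Next I would evaluate at $t = 1/16$. Writing $(C \odot C^s)(1/16) = \sum_{n \ge 0} \big([t^n]\, C^s\big)\, \Cat_n \, 16^{-n}$ and substituting the convolution, the key observation is that on each inner summand one has $n = m_1 + \cdots + m_s$, so that $16^{-n}$ factors as $16^{-m_1 - \cdots - m_s}$. Collapsing the outer sum over $n$ together with the inner sum over compositions of $n$ into a single sum over all tuples $(m_1, \dots, m_s) \in \Z_+^s$ then yields
\[
(C \odot C^s)(1/16) = \sum_{m_1, \dots, m_s \ge 0} \Cat_{m_1} \cdots \Cat_{m_s} \, \Cat_{m_1 + \cdots + m_s} \, 16^{-m_1 - \cdots - m_s},
\]
which is exactly the defining expression \eqref{eq:basic} for $A_s$.

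The only point requiring a word of justification --- and the closest thing to an obstacle --- is the rearrangement of the doubly-indexed sum into a single sum and the convergence at the evaluation point. Here I would simply note that all terms are nonnegative, so Tonelli's theorem licenses the regrouping unconditionally, and that finiteness is already guaranteed by the elementary bound $S(T_s) \le \big(\sum_{n} \Cat_n t^n\big)^{s+1} < \infty$ at $t = 1/4$ recorded in the introduction. With these remarks the two expressions are literally equal, which proves the lemma.
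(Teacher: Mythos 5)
Your proof is correct and is exactly the argument the paper has in mind: the paper dispatches this lemma with ``directly follows from the definition,'' and your coefficient-by-coefficient expansion, regrouping of the sum over $n$ and compositions into a single sum over $\Z_+^s$, and appeal to nonnegativity for the rearrangement is simply that definitional check written out in full. Nothing is missing.
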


It is well-known that $C(t)$ is an algebraic power series satisfying $C(t) =  1 +  t C(t)^2$, and that it is also a hypergeometric series, namely 
$C(t) = {}_{2}^{}{{}{{}{F_{1}^{}}}} \left(\frac{1}{2},1;2; 4t \right) .$
The next result shows that all positive integer powers of $C(t)$ are also hypergeometric.

\begin{lemma}\label{lem:HypergeomCats}
For any $s\geq 1$, 
\[ 
C(t)^s = {}_{2}^{}{{}{{}{F_{1}^{}}}} \left(\frac{s}{2}, \frac{s+1}{2}; s+1; 4t \right) .
\]
\end{lemma}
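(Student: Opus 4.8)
The plan is to prove the identity by comparing Taylor coefficients on both sides. On the hypergeometric side, writing $a=\tfrac{s}{2}$, $b=\tfrac{s+1}{2}$, $c=s+1$, the coefficient of $t^n$ in ${}_2F_1(a,b;c;4t)$ is by definition $\frac{a^{\uparrow n}b^{\uparrow n}}{c^{\uparrow n}}\frac{4^n}{n!}$. The first step is to obtain a closed form for $[t^n]C(t)^s$. For this I would apply Lagrange inversion to the series $g(t)\coloneqq C(t)-1$, which (from the defining algebraic equation $C=1+tC^2$) satisfies $g=t(1+g)^2$, i.e.\ $g=t\phi(g)$ with $\phi(g)=(1+g)^2$. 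Taking $H(g)=(1+g)^s$, Lagrange inversion gives, for $n\ge 1$,
\[
[t^n]C(t)^s=[t^n](1+g)^s=\frac{s}{n}\,[g^{n-1}](1+g)^{s-1+2n}=\frac{s}{n}\binom{2n+s-1}{n-1},
\]
while $[t^0]C(t)^s=1$. A short factorial manipulation rewrites this uniformly as $[t^n]C(t)^s=\frac{s}{2n+s}\binom{2n+s}{n}=\frac{s\,(2n+s-1)!}{n!\,(n+s)!}$ for all $n\ge 0$.

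The second step is to check that these coefficients agree with the hypergeometric ones. Rather than matching them term by term (which would require the Legendre duplication identity to split $s^{\uparrow 2n}$ as $2^{2n}(\tfrac{s}{2})^{\uparrow n}(\tfrac{s+1}{2})^{\uparrow n}$), I would compare the ratios of consecutive coefficients, since a hypergeometric series is determined by these ratios together with its constant term. Setting $d_n\coloneqq\frac{s\,(2n+s-1)!}{n!\,(n+s)!}$, a direct computation gives $\frac{d_{n+1}}{d_n}=\frac{(2n+s)(2n+s+1)}{(n+1)(n+1+s)}$. On the hypergeometric side, the ratio of the coefficient of $t^{n+1}$ to that of $t^n$ equals $\frac{4(a+n)(b+n)}{(c+n)(n+1)}$, and substituting $a=\tfrac{s}{2}$, $b=\tfrac{s+1}{2}$, $c=s+1$ turns the numerator $4(\tfrac{s}{2}+n)(\tfrac{s+1}{2}+n)$ into exactly $(2n+s)(2n+s+1)$. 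Hence the two ratios coincide; since both series have constant term $1$, they are equal, which is the claim.

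The only genuine obstacle is the first step, namely pinning down $[t^n]C(t)^s$; everything afterwards is bookkeeping. This coefficient formula is classical (it is the ballot-number expansion of powers of the Catalan series), so the Lagrange-inversion derivation can be kept brief, and one could alternatively simply cite it. As a sanity check I would verify the base case $s=1$, where the statement reduces to the known relation $C(t)={}_2F_1(\tfrac12,1;2;4t)$ recalled just before the lemma, to guard against an off-by-one in the parameters.
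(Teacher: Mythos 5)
Your proof is correct and takes essentially the same route as the paper: Lagrange inversion applied to the Catalan equation to obtain the classical coefficient formula $[t^n]\,C(t)^s=\frac{s}{n+s}\binom{2n+s-1}{n}$ (the paper phrases it via $tC(t)$ being the compositional inverse of $t-t^2$, you via $g=t(1+g)^2$), followed by identification with the hypergeometric series. The only cosmetic difference is the final matching step: the paper invokes the duplication identity $4^n\left(\tfrac{s}{2}\right)_n\left(\tfrac{s+1}{2}\right)_n=(s)_{2n}$ to equate coefficients directly, whereas you compare ratios of consecutive coefficients, which is an equivalent verification.
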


\begin{proof}
From $C(t) - t C(t)^2 = 1$ it follows that $t C(t)$ is equal to the compositional inverse of $(t-t^2)$. The Lagrange-Bürmann inversion theorem then implies that the $n$-th coefficient of $C^s$ is equal to $s/(n+s)$ times the $n$-th coefficient of $1/(1-t)^{n+s}$. From this we get that\footnote{A generalization of~\eqref{id:Lambert}, attributed to Lambert~\cite{Lambert1770}, can be found in~\cite[Eq.~(5.60, p.~201)]{GKP94}.} 
\begin{equation}\label{id:Lambert}
	 C(t)^s = \sum_{n\geq 0} \frac{s}{n+s} \binom{2n+s-1}{n} t^n ,
\end{equation}	 
and the conclusion follows by using the formula $4^n \cdot \left(\frac{s}{2} \right)_n \cdot \left(\frac{s+1}{2} \right)_n = (s)_{2n}$.
\end{proof}

As a consequence of \cref{lem:HypergeomCats} and of the obvious identity 
\[
 {}_{2}^{}{{}{{}{F_{1}^{}}}} \left(a_1, 1; c_1; r_1 t \right)
 \odot
 {}_{2}^{}{{}{{}{F_{1}^{}}}} \left(a_2, b_2; c_2; r_2 t \right)
 =
 {}_{3}^{}{{}{{}{F_{2}^{}}}} \left(a_1, a_2, b_2; c_1, c_2; r_1 r_2 t \right),
\]
we deduce that
\[
(C\odot C^s) (t) = {}_{3}^{}{{}{{}{F_{2}^{}}}} \left(\frac{1}{2},\frac{s}{2},\frac{s+1}{2};2,s +1;16 t \right) ,
\]
and combining this with \cref{lem:Hadamard}, we get the following nice hypergeometric expression for $A_s$.
\begin{corollary} \label{coro:3F2}
For any $s\geq 1$, 
the following equality holds
\begin{equation} \label{eq:As=3F2}
A_s = {}_{3}^{}{{}{{}{F_{2}^{}}}} \left(\frac{1}{2},\frac{s}{2},\frac{s+1}{2};2,s +1;1 \right) .
\end{equation}
\end{corollary}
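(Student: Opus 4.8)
The plan is to obtain the formula directly from the Hadamard-product computation displayed just before the statement; the only genuine content is to verify the ``obvious'' Hadamard identity used there and to check that the resulting ${}_3F_2$ series converges at the evaluation point $z=1$.

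First I would prove the Hadamard identity
\[
{}_2F_1(a_1, 1; c_1; r_1 t) \odot {}_2F_1(a_2, b_2; c_2; r_2 t) = {}_3F_2(a_1, a_2, b_2; c_1, c_2; r_1 r_2 t)
\]
by comparing coefficients. Since $1^{\uparrow n} = n!$, the coefficient of $t^n$ in the first factor simplifies to $\frac{a_1^{\uparrow n}}{c_1^{\uparrow n}} r_1^n$, so the coefficient of $t^n$ in the Hadamard product is
\[
\frac{a_1^{\uparrow n}}{c_1^{\uparrow n}} r_1^n \cdot \frac{a_2^{\uparrow n} b_2^{\uparrow n}}{c_2^{\uparrow n}} \frac{r_2^n}{n!} = \frac{a_1^{\uparrow n} a_2^{\uparrow n} b_2^{\uparrow n}}{c_1^{\uparrow n} c_2^{\uparrow n}} \frac{(r_1 r_2)^n}{n!},
\]
which is exactly the coefficient of $t^n$ in the right-hand side.

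Next I would instantiate this identity with the two hypergeometric representations already available: $C(t) = {}_2F_1(\tfrac12, 1; 2; 4t)$ (giving $a_1 = \tfrac12$, $c_1 = 2$, $r_1 = 4$) and, by \cref{lem:HypergeomCats}, $C(t)^s = {}_2F_1(\tfrac{s}{2}, \tfrac{s+1}{2}; s+1; 4t)$ (giving $a_2 = \tfrac{s}{2}$, $b_2 = \tfrac{s+1}{2}$, $c_2 = s+1$, $r_2 = 4$). This produces the power-series identity $(C \odot C^s)(t) = {}_3F_2(\tfrac12, \tfrac{s}{2}, \tfrac{s+1}{2}; 2, s+1; 16t)$. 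Evaluating at $t = 1/16$ and invoking \cref{lem:Hadamard}, which identifies $A_s$ with $(C \odot C^s)(1/16)$, then yields the claimed expression for $A_s$.

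The main (and essentially only) obstacle is legitimizing the evaluation at $t = 1/16$: since $C$ and $C^s$ both have radius of convergence $\tfrac14$, their Hadamard product has radius $\tfrac1{16}$, and $z = 16t = 1$ is precisely the boundary. I would dispose of this by noting that the ${}_3F_2$ converges absolutely at $z=1$, as the excess of lower over upper parameters equals $(2 + s + 1) - (\tfrac12 + \tfrac{s}{2} + \tfrac{s+1}{2}) = 2 > 0$ (equivalently, finiteness of the defining sum $A_s$ was established at the outset). Both sides of the identity are then one and the same absolutely convergent numerical series $\sum_{n \geq 0} c_n (1/16)^n$, so they agree and no Abelian-limit argument is required.
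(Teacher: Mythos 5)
Your proposal is correct and follows exactly the paper's route: the same Hadamard identity ${}_{2}^{}{{}{{}{F_{1}^{}}}}\odot{}_{2}^{}{{}{{}{F_{1}^{}}}}={}_{3}^{}{{}{{}{F_{2}^{}}}}$ (which the paper declares ``obvious''), instantiated with \cref{lem:HypergeomCats} and combined with \cref{lem:Hadamard}. The only difference is that you fill in two details the paper leaves implicit --- the coefficient-wise verification of the Hadamard identity and the justification of evaluating at the boundary point $z=1$ via the positive parametric excess --- both of which are sound.
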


We now prove the main result of this section.
\begin{theorem}\label{thm:stars}
For any $s\geq 0$:
\[A_{s + 3}  = 
\frac{64}{\pi} \cdot \left(
\sum_{k=0}^s   {\binom{s}{k}} \cdot \frac{1}{\left(2 k +1\right) \left(2 k +3\right) \left(2 k +5\right)}\right) .
\]	
In particular,  $A_s \sim\frac{8}{\pi} \cdot \frac{2^s}{s^3}$ as $s$ tends to infinity.
\end{theorem}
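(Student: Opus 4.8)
The plan is to bypass the hypergeometric manipulations and work with an integral representation from start to finish. The starting point is the classical fact that the Catalan numbers are the moments of the Marchenko--Pastur law,
\[
\Cat_n = \frac{1}{2\pi}\int_0^4 x^n\sqrt{\frac{4-x}{x}}\,dx,\qquad n\ge 0.
\]
Inserting this for the single factor $\Cat_{m_1+\cdots+m_s}$ in \eqref{eq:basic} and resumming the inner sums with $\sum_{m\ge0}\Cat_m y^m = C(y) = \frac{1-\sqrt{1-4y}}{2y}$ (the interchange being justified by Tonelli's theorem, since all summands are nonnegative), one gets
\[
A_s = \frac{1}{2\pi}\int_0^4\sqrt{\frac{4-x}{x}}\,\left(C(x/16)\right)^{s}dx.
\]

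First I would clean this up. Substituting $v=\sqrt{1-x/4}$ (so $x=4(1-v^2)$) and using $\sqrt{(4-x)/x}=v/\sqrt{1-v^2}$ together with the simplification $C(x/16)=2/(1+v)$, the representation collapses to
\[
A_s = \frac{2^{s+2}}{\pi}\int_0^1\frac{v^2}{(1+v)^s\sqrt{1-v^2}}\,dv.
\]
The \emph{decisive step} is then the Möbius-type change of variables $v=\frac{1-x^2}{1+x^2}$ applied to $A_{s+3}$: here $1+v=\frac{2}{1+x^2}$, $\sqrt{1-v^2}=\frac{2x}{1+x^2}$ and $dv=-\frac{4x}{(1+x^2)^2}\,dx$, and a direct computation makes all spurious factors cancel, leaving
\[
A_{s+3}=\frac{8}{\pi}\int_0^1(1-x^2)^2(1+x^2)^s\,dx.
\]

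From here the identity is immediate: expanding $(1+x^2)^s=\sum_{k=0}^s\binom sk x^{2k}$ and using the elementary evaluation $\int_0^1 x^{2k}(1-x^2)^2\,dx=\frac{8}{(2k+1)(2k+3)(2k+5)}$ (obtained by expanding $(1-x^2)^2$ and recognizing the partial-fraction decomposition of $\frac{1}{(2k+1)(2k+3)(2k+5)}$) yields exactly the claimed formula. For the asymptotics I would apply Laplace's method to the last integral: the mass concentrates at $x=1$, where $(1+x^2)^s\approx 2^s e^{-s(1-x)}$ and $(1-x^2)^2\approx 4(1-x)^2$, so that $\int_0^1(1-x^2)^2(1+x^2)^s\,dx\sim 2^s\int_0^\infty 4u^2 e^{-su}\,du=8\cdot 2^s/s^3$, giving $A_{s+3}\sim\frac{64}{\pi}\cdot\frac{2^s}{s^3}$ and hence $A_s\sim\frac{8}{\pi}\cdot\frac{2^s}{s^3}$.

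The main obstacle is arriving at the clean intermediate form and, above all, guessing the right substitution $v=\frac{1-x^2}{1+x^2}$ that turns the awkward weight $(1+v)^{-s}(1-v^2)^{-1/2}$ into the polynomial weight $(1-x^2)^2(1+x^2)^s$; once this is in hand, the rest is bookkeeping. The remaining care is routine: justifying the term-by-term integration and the sum--integral interchange, and tracking the orientation of the limits of integration under each substitution.
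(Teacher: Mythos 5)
Your proof is correct, and its first half takes a genuinely different---and more elementary---route than the paper's. The paper reaches the same integral representation through hypergeometric machinery: it identifies $A_s$ as a Hadamard product $(C\odot C^s)(1/16)$, proves via Lagrange inversion that $C(t)^s$ is the Gauss hypergeometric series ${}_2F_1\bigl(\tfrac{s}{2},\tfrac{s+1}{2};s+1;4t\bigr)$, deduces the evaluation $A_s={}_3F_2\bigl(\tfrac12,\tfrac{s}{2},\tfrac{s+1}{2};2,s+1;1\bigr)$, and then invokes Slater's Euler-type integral representation of ${}_3F_2$ to arrive at $A_s=\frac{2}{\pi}\int_0^1\sqrt{\tfrac{1-t}{t}}\,C(t)^s\,dt$, which is exactly your integral after the rescaling $x=4t$. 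Your replacement of all of this by the single classical fact that the Catalan numbers are the moments of the Marchenko--Pastur density, combined with Tonelli, is a genuine shortcut (and I checked your computations: the two substitutions, the cancellation to $\frac{8}{\pi}\int_0^1(1-x^2)^2(1+x^2)^s\,dx$, and the evaluation $\int_0^1x^{2k}(1-x^2)^2\,dx=\frac{8}{(2k+1)(2k+3)(2k+5)}$ are all correct). From the integral representation onward the two proofs essentially coincide: the paper's chain of substitutions $t=4u-4u^2$, $u=1/v$, $w=v-1$ lands on $A_{s+3}=\frac{4}{\pi}\int_0^1\frac{(1-w)^2}{\sqrt{w}}\,(1+w)^s\,dw$, which is your integral under $w=x^2$, and both conclude by binomial expansion against beta-type integrals. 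For the asymptotics you apply Laplace's method to the integral, while the paper uses concentration of the binomial coefficients in the sum just established; both are routine and correct. What the paper's longer route buys is its by-products: \cref{coro:3F2}, and more importantly its version with a free argument $z$, is what allows \cref{rmk:stars_series_pas_si_simple} to run creative telescoping and Petkov\v{s}ek-type algorithms to show that no simpler (hypergeometric or d'Alembertian) closed form exists for the full series $S(T_s)(t)$; your shortcut proves the theorem but does not produce that structural information.
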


We observe that, for $s\geq 3$, the sum~$A_{s}$ is not only a linear polynomial in $\Q[1/\pi]$, but it is equal to a rational multiple of $1/\pi$.

\begin{proof}
The starting point is the \href{https://functions.wolfram.com/HypergeometricFunctions/Hypergeometric3F2/07/01/01/0001/}{following identity}~\cite[Eq.~(4.1.2), page 108]{Slater66}, which holds as soon as $\Re(b_2) > \Re(a_3) > 0$:
\[
 {}_{3}^{}{{}{{}{F_{2}^{}}}} \left(a_1, a_2, a_3; b_1, b_2; z \right)
 =
\frac{\Gamma(b_2)}{\Gamma(a_3) \cdot \Gamma(b_2-a_3)}
\cdot \int_0^1 t^{a_3-1} \cdot (1-t)^{b_2-a_3-1} \cdot 
 {}_{2}^{}{{}{{}{F_{1}^{}}}} \left(a_1, a_2; b_1; tz \right) \, dt
\]
Choosing $(a_1, a_2, a_3) = (s/2, (s+1)/2, 1/2)$, $(b_1, b_2) = (s+1, 2)$ and specializing at $z=1$ yields
\[
A_s
 =
\frac{\Gamma(2)}{\Gamma(1/2) \cdot \Gamma(3/2)}
\cdot \int_0^1 t^{-1/2} \cdot (1-t)^{1/2} \cdot 
 {}_{2}^{}{{}{{}{F_{1}^{}}}} \left(s/2, (s+1)/2; s+1; t \right) \, dt .
\]
The prefactor evaluates as $\frac{\Gamma(2)}{\Gamma(1/2) \cdot \Gamma(3/2)}=\frac2{\pi}$.
By~\cref{lem:HypergeomCats}, we have
\[{}_{2}^{}{{}{{}{F_{1}^{}}}} \left(s/2, (s+1)/2; s+1; t \right)= C(t)^s=\left( \frac2{1 + \sqrt{1-t}}\right)^{s}.\] 
From this, we get
\[
A_s
 =
\frac{2}{\pi}
\cdot \int_0^1 \sqrt{\frac{1-t}{t}} \cdot 
\left( \frac{1 + \sqrt{1-t}}{2}\right)^{-s}
 \, dt .
\]
The successive changes of variables $t=4u - 4u^2$, $u = 1/v$ and $w=v-1$ provide the alternative representations
\[
A_s
 =
\frac{4}{\pi}
\cdot \int_{\frac12}^1 \frac{(2u-1)^2}{\sqrt{u (1-u)}} \cdot u^{-s} \, du 
 =
\frac{4}{\pi}
\cdot \int_{1}^2 \frac{(2-v)^2}{\sqrt{v-1}} \cdot v^{s-3} \, dv 
 =
\frac{4}{\pi}
\cdot \int_{0}^1 \frac{(1-w)^2}{\sqrt{w}} \cdot (w+1)^{s-3} \, dw .
\]
Then the binomial formula gives us
\[A_{s+3}=\frac{4}{\pi} \cdot \left(
\sum_{k=0}^s   {\binom{s}{k}} \cdot \int_{0}^1 \frac{(1-w)^2}{\sqrt{w}} \cdot (w)^{k} \, dw \right).\]
Recalling the standard evaluation of beta integrals $\int_0^1 x^{\alpha-1} (1-x)^{\beta-1}dx=\frac{\Gamma(\alpha)\Gamma(\beta)}{\Gamma(\alpha+\beta)}$, we get
\[
\int_{0}^{1} \left(1-w \right)^{2} w^{k -\frac{1}{2}} \, dw
 = 
\frac{16}{\left(2 k +1\right) \left(2 k +3\right) \left(2 k +5\right)},
\]
which concludes the proof of the exact formula.

In order to get the asymptotic behavior, observe that, by using Stirling's formula
or standard deviation estimates, one has, as $s \rightarrow \infty$:
\begin{align*}
\sum_{k=0}^s \binom{s}{k} \cdot \frac{1}{(2k+1)(2k+3)(2k+5)} &= (1+o(1)) \sum_{|k-s/2| \leq s^{3/4}} \binom{s}{k} \cdot \frac{1}{(2k+1)(2k+3)(2k+5)}\\
&= (1+o(1)) \frac{1}{s^3} \sum_{|k-s/2| \leq s^{3/4}} \binom{s}{k}
 = (1+o(1)) \frac{2^s}{s^3}.
\end{align*}
This proves that
\begin{align*}
A_s &\underset{s \rightarrow \infty}{\sim} \frac{64}{\pi} \frac{2^{s-3}}{(s-3)^3} 
\underset{s \rightarrow \infty}{\sim} \frac{8}{\pi} \frac{2^s}{s^3}.\qedhere
\end{align*}
\end{proof}

\begin{remark}\label{rk:more_As}
From the moment representation 
\[
A_{s}
 =
\frac{4}{\pi}
\cdot \int_{1}^2 \frac{(2-v)^2}{\sqrt{v-1}} \cdot v^{s-3} \, dv ,
\]	
one can deduce that the sequence $(A_s)_{s \geq 1}$ satisfies the following recurrence relations, valid for $s\geq 1$:
\[
(2s+1)(s^2-s+1) A_{s+1} - 2(s-2)(s^2+s+1) A_s = \frac{16}{\pi} \cdot 2^s
\]
and
\[
  (2 s + 3) A_{s+2}
  - 2 (3 s - 2) A_{s+1} 
 + 4 (s - 2) A_s 
 = 0.\]	
The expression in \cref{thm:stars} is a so-called \emph{d'Alembertian solution} of these recurrences (i.e., it is expressible as a nested indefinite sum of a hypergeometric sequence).
Using Petkov\v{s}ek's algorithm~\cite{Petkovsek92}, the same recurrences can be shown to have no hypergeometric solutions. In other terms, $A_s$ cannot be expressed in even simpler form.
Solving these recurrences in several ways (e.g., using ``variation of constants'', or using the algorithms in~\cite{AbPe94}), one can moreover deduce alternative closed expressions for $A_s$ ($s \geq 3$), e.g.:
\begin{equation*}\label{eq:closed_As_1}
A_{s} =
\frac{4^{s} \left(s^{2}-s+1\right) (s-3) ! \left(s -1\right)!}{\left(2 s-1\right)! \; \pi}
\cdot
\left(
\frac47 +
\sum_{j=0}^{s-4}
\frac{\left(2 j +5\right)!}{\left(j +2\right)! \left(j +1\right)! \left(j^{2}+5 j +7\right) \left(j^{2}+7 j +13\right) 2^{j}}
\right) ,
\end{equation*}
\begin{equation*}\label{eq:closed_As_3}
A_s = \frac{4}{\pi} \cdot \left( \frac{4^s}{\binom{2s}{s}} \cdot \frac{s^2-s+1}{s(s-1)(s-2)}
\cdot \left( 1 + \sum_{k=0}^{s-2} \frac{\binom{2k+1}{k}}{2^k} \right) - \frac{2^s \cdot s}{(s-1)(s-2)}  \right) ,
\end{equation*}
\begin{equation*}\label{eq:closed_As_3}
A_s = \frac{4^{s+1} (s^2-s+1)}{\pi s(s+1)(s+2) \binom{2s-1}{s-3}} \cdot 
\left( \frac17 + \sum_{\ell = 3}^{s-1} \frac{\ell (\ell +1) \binom{2\ell-1}{\ell-2}}{2^{\ell-1} ( \ell^4+\ell^2+1)} \right) 
\end{equation*}
and
\begin{equation*}\label{eq:closed_As_4}
A_s =  \frac{64}{15 \pi} \cdot \frac{(s-3)!}{\left(\frac{7}{2}\right)_{s-3}} \cdot P_{s-3}^{\left(\frac{5}{2}, -s \right)} \left(3\right)  ,
\end{equation*}
where $P_{n}^{\left(\alpha, \beta \right)}$ is the classical Jacobi polynomial defined by
\[
P_n^{(\alpha,\beta)}(t)=\frac{(\alpha+1)_n}{n!}\,{}_2F_1\left(-n,1+\alpha+\beta+n;\alpha+1;\tfrac{1}{2}(1-t)\right) .
\]
\end{remark}	

\begin{remark}
  Considering more generally the series $S(T_s)$, and not only its evaluation $A_s=S(T_s)(\tfrac14)$, one can prove that
\[
S(T_{s+3}) =
B_s \cdot H_1
-
C_s \cdot H_2 ,
\]

where:
\[
B_s
=
\frac{ \left(5 s +11\right) \cdot 2^{s +1}}{2 s +5}
+
\frac{4 \cdot 4^s \cdot \left(s^2+5 s +7\right)}{\left(s +1\right) \left(s +2\right) \left(s +3\right) \binom{2 s +5}{s +2}} \cdot \sum_{k=0}^s \frac{\left(k +1\right) \left(4 k +9\right) \binom{2 k +2}{k +1}}{2^k}
\]
and
\[
C_s =
 \frac{3 \cdot \left(s -1\right) \cdot  2^{s}}{2 s +5}
 + \frac{6 \cdot 4^s \cdot  \left(s^2 +5 s +7\right)}{\left(s +1\right) \left(s +2\right) \left(s +3\right) \binom{2 s +5}{s +2}}
 \cdot  \sum_{k=0}^s \frac{\left(k +1\right) \left(4 k +13\right) \binom{2 k +2}{k +1}}{2^k}
\,  .
 \]
 While still linear in $H_1$ and $H_2$, these expressions
 are more involved than that of~\cref{thm:stars}.
\end{remark}

\begin{remark}\label{rmk:stars_series_pas_si_simple}
Computer algebra tools, notably creative telescoping, can be used to prove that
\[
u_s \coloneqq {}_{3}^{}{{}{{}{F_{2}^{}}}} \left(\frac{1}{2},\frac{s}{2},\frac{s+1}{2};2,s +1; z \right)
 =
\frac{2}{\pi}
\cdot \int_0^1 \sqrt{\frac{1-t}{t}} \cdot 
\left( \frac{1 + \sqrt{1-tz}}{2}\right)^{-s}
 \, dt 
\]
satisfies the linear recurrences
\[
z \left(2 s +5\right) \left(s +2\right) u_{s +3}
-2 \left(s +1\right) \big( (s+3) z +4 s +8\big) u_{s +2}
+16 \left( s^{2}+ 2 s -2\right) u_{s +1}
-8 \left(s +3\right) \left(s -2\right) u_s = 0 .
\]
and
\begin{equation*}
\begin{split}
4 \left(s +3\right) \left(2-s \right) u _s
+4 \left(s +1\right) \left(s +2\right) u_{s +1}
-z \left(s +1\right) \left(s +3\right) u_{s +2}
  =  \\
\frac{8 \left(s^{2}+5 s -4\right)}{s +1} 
\cdot  {}_{4}^{}{{}{{}{F_{3}^{}}}}  
\left(
-\frac{1}{2}, \frac{s}{2}+1, \frac{s+1}{2}, \frac{\left(s +6\right) \left( 1- s \right)} {s^{2}+5 s -2}; 2, s +2, -\frac{2 \left(s^{2}+5 s -4\right)}{s^{2}+5 s -2}; z \right) .
\end{split}
\end{equation*}
Using the algorithms in~\cite{Petkovsek92,AbPe94}, these recurrences can be shown to have no hypergeometric and even no d'Alembertian solutions. In particular, no formula such as those 
in \cref{thm:stars} or in~\cref{rk:more_As} may exist for ${}_{3}^{}{{}{{}{F_{2}^{}}}} \left(\frac{1}{2},\frac{s}{2},\frac{s+1}{2};2,s +1; 16 t^2 \right)$.
\end{remark}

\appendix
\crefalias{section}{appendix}

\section{Algebraic independence of the generators}
\label{appendix:alg_independence}

We prove in an elementary way the algebraic independence of $H_1(t)$ and $H_2(t)$, or, equivalently, of the complete elliptic integrals of first and second kinds, $K(t)$ and $E(t)$.
It may well be that this result is classical; however we could not locate it in the extremely rich literature on special functions.
Our proof is based on asymptotic arguments and on the transcendence of $\pi$.
(In the footnote~\footref{fn1} we sketched another proof relying on a much harder result in transcendence theory.)

We consider the functions
\[f_1(t) \coloneqq \frac{2K(\sqrt t)}{\pi} = {}_{2}^{}{{}{{}{F_{1}^{}}}}\big(\tfrac12, \tfrac12; 1; t\big),
\qquad f_2(t) \coloneqq \frac{2E(\sqrt t)}{\pi} = {}_{2}^{}{{}{{}{F_{1}^{}}}}\big(-\tfrac12, \tfrac12; 1; t\big).\]
We first study the asymptotic behavior of the coefficients of $f_1$ and $f_2$,
and the behavior of $f_1(t)$ and $f_2(t)$ near $t=1$. 
We observe via Stirling's formula that $f_1(t)$ expands as
$f_1(t)=\sum_{n \ge 0} c_{n,1} t^n$ with $c_{n,1} = \binom{2n}{n} \cdot 16^{-n}  \sim 1/(\pi n)$.
This yields the estimate $f_1(t) \sim -\log(1-t)/\pi $ as $t$ tends to $1$ from below.
On the other hand, the Gauss summation formula~\eqref{eq:values_elliptic} gives
\[f_2(1)=\frac{\Gamma(1)^2}{\Gamma(\tfrac12) \Gamma(\tfrac32)}=\frac{2}{\pi}.\]
Now, for the sake of contradiction, we assume that there exists a nonzero polynomial  with rational coefficients such that $P(t,f_1(t),f_2(t))=0$. 
We expand $P(t,x,y)$ under the form $\sum_{i,j \ge 0} (t-1)^i x^j P_{i,j}(y)$,
and consider the nonzero term $(t-1)^{i_0} x^{j_0} P_{i_0,j_0}(y)$
which is asymptotically the largest around $t=1$ after substituting $x=f_1(t)$ and $y=f_2(t)$.
Concretely, $(-i_0,j_0)$ is the maximum of $\{(-i,j) : P_{i,j}\ne 0\}$
 in lexicographic order.
 Looking at the asymptotic behavior of $P(t,f_1(t),f_2(t))$ as $t$ tends to $1$
from below, we have 
\[ P(t,f_1(t),f_2(t))\, = \, (P_{i_0,j_0}(\tfrac2{\pi})+o(1))\cdot (t-1)^{i_0} \cdot (-\log(1-t)^{j_0}/\pi^{j_0}).\]
Since this should be equal to $0$, it forces $P_{i_0,j_0}(\tfrac2{\pi})=0$.
But $\pi$ is transcendental, so that we must have $P_{i_0,j_0} = 0$.
We have reached a contradiction, proving the algebraic independence of $f_1(t)$ and $f_2(t)$ over $\mathbb Q(t)$.
By \cite[Lemma 7.2]{ABD19} (whose proof can be found in Section 88, pages 133-134, of Mahler's book~\cite{Mahler76}), they are also algebraically independent over $\mathbb C(t)$, since their coefficients lie in~$\Q$.
Thus $E(t)$ and $K(t)$ are
algebraically independent over $\mathbb C(t)$,
and, using \eqref{eq:H12_EK}, it is also the case for $H_1(t)$ and~$H_2(t)$.
\hfill $\square$

\begin{remark}
Yet a different proof, of a more arithmetic flavor, could be based on the results of~\cite{ABD19}. 
This proof still uses asymptotics, but does not rely on the transcendence of $\pi$; it uses instead properties (``à la Lucas'') modulo primes of the coefficients of 
$f_1(t) 
% = 2K(4 \sqrt t)/\pi 
= 1+4 t +36 t^{2}+\cdots$ and of 
$f_2(t) 
%=  2E(4 \sqrt t)/\pi
= 1-4 t -12 t^{2}-80 t^{3}-\cdots$.
Note that while the coefficient sequence $\binom{2n}{n}^2$ of $f_1(t)$ is $p$-Lucas for all primes~$p\in\mathcal{P}$, this property does not hold for the coefficient sequence $\binom{2n}{n}^2/(1-2n)$ of $f_2(t)$, i.e. $f_2 \notin{\mathfrak{L}(\mathcal{P})}$. Hence, one cannot conclude as in~\cite[Theorem~7.7]{ABD19}. However, one can show that $f_2(t)$ belongs to the larger set~${\mathcal{L}(\mathcal{P})}$ (that is, $f_2(t)/f_2(t^p)\bmod p$ is a rational function in $\mathbb{F}_p(t)$ for all~$p\in\mathcal{P}$), so that~\cite[Proposition~7.6]{ABD19} can be applied.
\end{remark}

\section{The algorithm}
\label{appendix:algo}

We describe here, in pseudo-code, the algorithm that we use. Start with a (rooted) decorated tree $T$, whose height is denoted by $H(T)$ and whose root is denoted by $\rho$. Perform the following steps one by one in order. Implicitly, every time a sum associated to a tree with strictly smaller total path length than $T$ (including trees with strictly less vertices than $T$) appears, perform the algorithm on this smaller tree, starting from step~1) below.

The results proved in the paper show that this algorithm is well-defined and always terminates.

\emph{1) Tree of height $0$}

If $H(T)=0$, then, letting $T=\{\rho\}$:
\begin{itemize}
\item If $\rho$ is gray decorated with $(\bowtie, K)$, then $S(T)(t)=\One[0 \bowtie K]$.
\item If $\rho$ is white, then:
\begin{itemize}
\item if $\Delta_\rho=(\varnothing,K)$, return $C(t)$;
\item if $\Delta_\rho=(=,K)$, return $\Cat_K t^K$ if $K \geq 0$, return $0$ otherwise;
\item if $\Delta_\rho=(\geq,K)$, return $C(t)- \sum_{\ell=0}^{K-1} \Cat_\ell t^\ell$;
\item if $\Delta_\rho=(\leq,K)$, return $\sum_{\ell=0}^{K} \Cat_\ell t^\ell$.
\end{itemize}
\item If $\rho$ is black, then:
\begin{itemize}
\item if $\Delta_\rho=(\varnothing,K)$, return $C(t)$;
\item if $\Delta_\rho=(=,K)$, return $\Cat_{-K} t^{-K}$ if $K \le 0$, return $0$ otherwise;
\item if $\Delta_\rho=(\geq,K)$, return $\sum_{\ell=0}^{-K} \Cat_\ell t^\ell$ if $K \leq 0$, return $0$ otherwise;
\item if $\Delta_\rho=(\leq,K)$, return $C(t)- \sum_{\ell=0}^{-K-1} \Cat_\ell t^\ell$.
\end{itemize}
\end{itemize} 

\emph{2) Make $T$ a good tree}

Now let $T$ be a decorated tree of height $H(T) \geq 1$.
\begin{itemize}
\item[2(a)] If there exists $v \in T$ nonroot with decoration $(=,K)$ for some $K \in \Z$, use  \cref{lem:equalitylabel} and (with the notation of~\cref{lem:equalitylabel}), return $S(U) \cdot S(V)$. Repeat this step until there is no such vertex in $T$.
\item[2(b)] If there exists a vertex $v$ with decoration $(\bowtie,K)$ where $\bowtie \, \in \{ \leq,\geq\}$ and $K \in \Z \backslash \{0\}$, use \cref{lem:shift_k} $|K|$ times in the direction that makes $|K|$ decrease, to write $S(T)=S(T')+A$, where $T'$ is the tree $T$ with the decoration of $v$ being now $(\bowtie,0)$, and $A$ is a sum of sums associated to trees with a leaf having decoration $(=,K)$. On the trees appearing in $A$, apply Step 2(a). 

Repeat this step on $T'$, until there is no such vertex in $T$.
\item[2(c)] If there exists a gray leaf $v$ with decoration $(\bowtie,K)$, return $\mathds{1}[0 \bowtie K] \cdot S(T')$. Here $T'$ is $T \backslash \{v\}$, in which the parent of $v$ in $T$ gets $+K$ to its own decoration (\cref{lem:leaf}, last inequality). Repeat this step on $T'$, until there is no gray leaf in $T$.
\item[2(d)] If there exists a leaf $v$ with decoration $(\bowtie,0)$ where $\bowtie \, \in \{ \leq,\geq\}$, use \cref{lem:leaf} to write $S(T)=S(T')$, where in $T'$, $v$ has decoration either $(=,0)$ or $(\varnothing,0)$. If it has decoration $(=,0)$, apply Step 2(a) to $T'$. Otherwise, repeat this step on $T'$ until there is no such leaf in $T$.
\item[2(e)] If there exists a leaf $v$ with decoration $(\varnothing,K)$ for some $K \in \Z$, use the first part of \cref{lem:varnothinglabel} to modify it in a leaf with decoration $\varnothing_0$. Repeat this step on the resulting tree, until there is no such leaf in $T$.
\smallskip

Now all leaves of $T$ have decoration $\varnothing_0$.

\item[2(f)] If there exist two leaves with the same color and the same parent, then use  \cref{lem:twin_varnothing} to write $S(T)(t)=t^{-1} S(T')(t)-t^{-1} S(T'')(t)$, where $T',T''$ have strictly less vertices than $T$.
\item[2(g)] If there exists a leaf with the same color as its parent, then use  \cref{lem:consecutive_varnothing} to write $S(T)(t)=t^{-1} S(T')(t)-t^{-1} S(T'')(t)$, where $T',T''$ have strictly less vertices than $T$.
\item[2(h)] If there exists a leaf with a gray parent, then by \cref{lem:varnothinglabel} (second part), we have $S(T)=S(T')$ where $T'$ has strictly less vertices than $T$.
\end{itemize}

We now handle the case where $T$ is a good tree.

\emph{3) Good tree of height $1$}

\begin{itemize}
\item If $H(T)=1$, then, since $T$ is a good tree, $T$ is made of two vertices, one black and one white, and the nonroot vertex is decorated with $\varnothing_0$ (see \cref{lem:fringe_subtrees} (i)). Using  \cref{prop:basecase}, we get $S(T)$ explicitly.
\end{itemize}

Let us consider the case where $T$ is a good tree with height $\geq 2$. We let $v$ be any vertex of $T$ such that the fringe subtree $T_v$ rooted at $v$ has height \textbf{exactly} $2$.

\emph{4) Modify the fringe subtree $T_v$ and get rid of $=$/$\varnothing$ decoration at $v$.}
\begin{itemize}
\item[4(a)] if $v$ is decorated with $(\varnothing,K)$ for some $K$ in $\mathbb Z$,
use~\cref{lem:root_varnothing} or~\cref{lem:varnothinglabel} 
(depending on whether $v$ is the root or not) to compute $S(T)$.
\item[4(b)] Let $w$ be a black child of $v$ with at least one child itself. Then we have ensured that $w$ has exactly one child $z$ which is a white leaf. We have $S(T)=S(T')$, where $T'$ is obtained from $T$ by swapping the colors of $w$ and $z$. 
Perform this step on $T_v \subseteq T'$ until there is no more such a vertex $w$.
\item[4(c)] If $v$ is decorated with $(=,K)$ for some $K$, then $v$ is necessarily the root of the tree and $T$ is of the form $U_{i,j,k}^{(=,K)}$, $\bar U_{i,j,k}^{(=,K)}$ or $V_{i,j,k}^{(=,K)}$. 
  \begin{itemize}
    \item[4(c)1.] Assume $T=V_{i,j,k}^{(=,K)}$, 
      If $k \ge 2$, we use \eqref{eq:V1}, we have $S(T)(t)=t^{-2}(S_1(t)+S_2(t)+S_3(t)+S_4(t))$, where $S_1,S_2,S_3,S_4$ are associated with trees with strictly less vertices than $T$. If $k=1$, we use \eqref{eq:V4} to express $S(T)$ in terms of $S(V_{i,j,0}^{(=,K)})$. If $k=0$ and $i,j \ne 0$, we use \cref{lem:linear_system}. Finally if $k=0$ and $i$ or $j$ is equal to $0$, the condition on summation variables can be rewritten as a disjoint union of conjunctions of simple equalities, reducing the sum to products of sums indexed by two-vertex trees (see the proof of \cref{prop:UV} for details).
    \item[4(c)2.] Assume $T=U_{i,j,k}^{(=,K)}$ or $T=\bar U_{i,j,k}^{(=,K)}$. If $k \ge 1$, we simply apply \eqref{eq:reduc_U_k}. If $k=0$, but $j \ne 0$, we apply \eqref{eq:reduc_U_ij} to reduce this case to $k=j=0$. Finally if $k=j=0$, the condition on summation variables allows to reduce the sum as above.
  \end{itemize}
\end{itemize}

We now consider cases according to the color of $v$.

\emph{5) If $v$ is gray}

In this case, $T_v=V_{i,j,k}^{\Delta}$ for some $i,j,k \geq 0$ and some decoration $\Delta$.
\begin{itemize}
\item[5(a)] If $k\geq 2$, we can write by \eqref{eq:V1} that $S(T)(t)=t^{-2}(S_1(t)+S_2(t)+S_3(t)+S_4(t))$, where $S_1,S_2,S_3,S_4$ are associated with trees with strictly less vertices than $T$.
\item[5(b)] If $k=1$, then \eqref{eq:V4} allows us to write $S(T)$ as a sum of sums involving either trees with less vertices than $T$, or trees $V_{i,j,0}^{\Delta}$ for some $i,j \geq 0$, on which we perform the next steps of the algorithm.
\item[5(c)] If $(i,j,k)=(d,0,0)$ or $(0,d,0)$ for some $d \geq 1$, then:
\begin{itemize}
\item[5(c).1] If $\Delta=(\geq,0)$, use \eqref{eq:V2} to write $S(T)=S(T')$ where $T'$ has an internal vertex $v$ with decoration $(\varnothing,0)$ and go to Step 4(a)).
\item[5(c).2] If $\Delta=(\leq,0)$, \eqref{eq:V3} allows us to compute $S(T)$;
\end{itemize}
\item[5(d)] If $i,j\geq 1$, then we use \cref{lem:linear_system} to write $S(T)$ as a sum of sums associated to trees that are either with less vertices than $T$; or $V^{\Delta}_{d,0,0}$ and $V^{\Delta}_{0,d,0}$ on which we apply the algorithm started at Step 5(c); or trees $T'$ with the same number of vertices and total path length than $T$, obtained from $T$ by replacing $T_v$ by $V^{\Delta}_{a,b,2}$ where $a+b=i+j-2$. We can apply our algorithm starting at Step 5(a) to these trees.
\end{itemize}

\emph{6) If $v$ is black or white}

We describe here only the case where $v$ is white. The case where $v$ is black can be easily deduced by symmetry.
\begin{itemize}
\item[6(a)] If all subtrees rooted at children of $v$ are of size $2$ except for one black leaf, then, by the second part of \cref{lem:varnothinglabel} in the reverse direction, $S(T)=S(T')$ where $T'_v=V^{\Delta}_{i,j,k}$ for some $i,j,k \geq 0$. Apply the algorithm to $T'$, started at Step 5(a).
\item[6(b)] If all subtrees rooted at children of $v$ are of size $2$, then $T_v=U_{i,j,k}^{\Delta}$ for some $i,j,k \geq 0$:
\begin{itemize}
\item[6(b).1] If $k \geq 1$, then \eqref{eq:reduc_U_k} allows us to write $S(T)(t)=t^{-1}S(T')-t^{-1}S(T'')$, where~$T'$ has less vertices than $T$ and $T''=V_{i,j,k}^{\Delta+1}$. 
We apply our algorithm to $T'$ and~$T''$, starting at Step 2(e) (to get rid of the $+1$ shift).
\item[6(b).2] If $(i,j,k)=(d,0,0)$, then:
\begin{itemize}
\item If $\Delta=(\geq,0)$, use \eqref{eq:U1} to write $S(T)=S(T')$ where $T'$ has an internal vertex with decoration $\varnothing_0$ and go to step 4(a).
\item If $\Delta=(\leq,0)$, use \eqref{eq:U2} to write $S(T)$ as a product of sums associated to trees with less vertices than $T$.
\end{itemize}
\item[6(b).3] If $k=0$ and $1 \leq j \leq d$, then we use \eqref{eq:reduc_U_ij} to write $S(T)$ as the sum of sums associated to either $U_{i+j,0,0}$, on which we apply the algorithm started at Step 6(b).2; or trees with less vertices than $T$ ; or trees with $k=1$, on which we apply the algorithm started at Step 6(b).1.
\end{itemize} 
\end{itemize}

\section{Running the algorithm on a non-trivial example}
\label{appendix:running_algo}

We show here how our algorithm works on an example, namely the tree $T$ on the left of \cref{fig:octopus_tree} (this is the tree $T_{6,c}$ in \cref{fig:trees2-6} and in \cref{ssec:T6c}).

\begin{figure}[!ht]
\begin{tabular}{c c c}
\begin{tikzpicture}
\draw[white] (-2,0) -- (2,0);
\draw[white] (0,2) -- (0,2.8);
\draw (0,0) -- (0,1) -- (0,2) -- (-1,1) (0,2) -- (1,1) -- (1,0);
\draw[fill] (0,0) circle(.1);
\draw[fill] (0,1) circle(.1);
\draw[fill] (0,2) circle(.1);
\draw[fill] (1,1) circle(.1);
\draw[fill] (-1,1) circle(.1);
\draw[fill] (1,0) circle(.1);
\end{tikzpicture}
&
\begin{tikzpicture}[every node/.style={scale=.8}]
\draw[white] (-2,0) -- (2,0);
\draw[white] (0,2) -- (0,2.8);
\draw (0,0) -- (0,1) -- (0,2) -- (-1,1) (0,2) -- (1,1) -- (1,0);
\draw[fill=white] (0,0) circle(.15);
\draw[fill] (0,1) circle(.15);
\draw[fill=white] (0,2) circle(.15);
\draw[fill] (1,1) circle(.15);
\draw[fill] (-1,1) circle(.15);
\draw[fill=white] (1,0) circle(.15);
\draw (.6,2) node{$(=,0)$};
\draw (.4,1) node{$\leq_0$};
\draw (1.4,1) node{$\leq_0$};
\draw (-.6,1) node{$\varnothing_0$};
\draw (1.4,0) node{$\varnothing_0$};
\draw (.4,0) node{$\varnothing_0$};
\end{tikzpicture}
&
\begin{tikzpicture}[scale=.4]
\draw (-1,0) -- (11,0);
\draw (10,0) arc(0:180:.5);
\draw (10,0) arc(0:-180:1.5);
\draw (9,0) arc(0:-180:.5);
\draw (8,0) arc(0:180:3.5);
\draw (7,0) arc(0:180:1.5);
\draw (6,0) arc(0:-180:1.5);
\draw (6,0) arc(0:180:.5);
\draw (5,0) arc(0:-180:.5);
\draw (2,0) arc(0:-180:.5);
\draw (3,0) arc(0:180:.5);
\end{tikzpicture}
\end{tabular}
\caption{Left: a tree $T$ with $6$ vertices. Middle: the associated decorated tree. Right: a meander whose interior face corresponds to the sum $S(T)(\frac{1}{4})$.}
\label{fig:octopus_tree}
\end{figure}
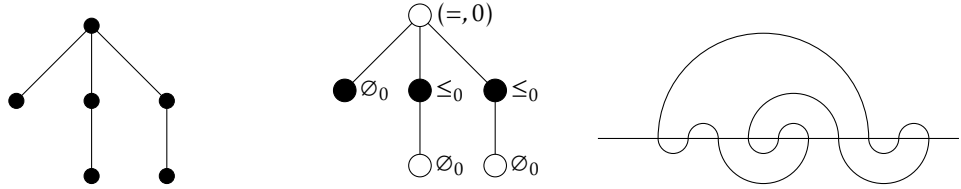

We first build the decorated tree $T'$ associated to $T$, obtained by the construction described in Section \ref{sec:change_variables} by taking as the root its topmost vertex. This decorated tree is depicted in \cref{fig:octopus_tree}, middle. The sum $S(T) \coloneqq  S(T)(\frac{1}{4})$ corresponds e.g. to the interior face of the meander drawn on \cref{fig:octopus_tree}, right, in the sense of Section \ref{ssec:meanders_and_trees}. 

Our goal is to compute the sum $S(T)=S(T')$.
Using the second part of \cref{lem:varnothinglabel} in the unusual direction to \enquote{pull down} the black leaf of $T'$, we get that $S(T')=S(V^{(=,0)}_{2,0,1})$, where $V^{\Delta}_{i,j,k}$ is defined in \cref{sec:long_stars}. As a first step, we can write:

\begin{align*}
S\left( V_{2,0,1}^{(=,0)} \right) &= S\left( V_{3,0,0}^{(=,0)} \right) + S\left( V_{2,1,0}^{(=,0)} \right) - S\left( V_{2,0,0}^{(=,0)} \right) \cdot S_{=,0} \text{ by } \eqref{eq:V4}.\\
\end{align*}

We can immediately compute $S\left( V_{2,0,0}^{(=,0)} \right) = S_{=,0}^2$. On the other hand, we obtain from~\cref{lem:linear_system} that

\begin{align*}
\begin{pmatrix}
S(V_{1,2,0}^{(=,0)}) \\
S(V_{2,1,0}^{(=,0)})
\end{pmatrix}
&= \begin{pmatrix}
2 & 1 \\ 1 & 2
\end{pmatrix}^{-1}
\left[ \begin{pmatrix}
X_1 \\ X_2
\end{pmatrix} - \begin{pmatrix}
S(V_{0,3,0}^{(=,0)}) \\ S(V_{3,0,0}^{(=,0)})
\end{pmatrix} \right]\\
&= \frac{1}{3} \begin{pmatrix}
2 & -1 \\ -1 & 2
\end{pmatrix} \left[ \begin{pmatrix}
X_1 \\ X_2
\end{pmatrix} - \begin{pmatrix}
S(V_{0,3,0}^{(=,0)}) \\ S(V_{3,0,0}^{(=,0)})
\end{pmatrix} \right],
\end{align*}
where $X_1,X_2$ are defined in \cref{lem:linear_system}, so that
\begin{align*}
S\left( V_{2,1,0}^{(=,0)} \right) &=  -\frac{1}{3}\left( X_1 -S\left( V_{0,3,0}^{(=,0)} \right) \right) + \frac{2}{3} \left( X_2 -S\left( V_{3,0,0}^{(=,0)} \right) \right)\\ 
&= -\frac{1}{3} \left( S\left( V_{0,1,2}^{(=,0)} \right) + 2 S\left( V_{0,1,1}^{(=,0)} \right) \cdot S_{=,0} + S\left( V_{0,1,0}^{(=,0)} \right) \cdot (S_{=,0})^2 -S\left( V_{0,3,0}^{(=,0)} \right) \right) \\
&+ \frac{2}{3} \left( S\left( V_{1,0,2}^{(=,0)} \right) + 2 S\left( V_{1,0,1}^{(=,0)} \right) \cdot S_{=,0} + S\left( V_{1,0,0}^{(=,0)} \right) \cdot (S_{=,0})^2 -S\left( V_{3,0,0}^{(=,0)} \right) \right)
\end{align*}
 
Since $S\left( V_{i,j,k}^{(=,0)} \right) = S\left( V_{j,i,k}^{(=,0)} \right)$ for all $i,j,k$, we get
\begin{align*}
S\left( V_{3,0,0}^{(=,0)} \right) + S\left( V_{2,1,0}^{(=,0)} \right) &= \frac{1}{3} \left( S\left( V_{0,1,2}^{(=,0)} \right) + 2 S\left( V_{0,1,1}^{(=,0)} \right) \cdot S_{=,0} + S\left( V_{0,1,0}^{(=,0)} \right) \cdot (S_{=,0})^2 +2 S\left( V_{0,3,0}^{(=,0)} \right) \right).
\end{align*}

Consider all the terms of the right-hand side separately, from right to left. \begin{itemize}
\item First, $S\left( V_{0,3,0}^{(=,0)} \right) = (S_{=,0})^3$ since the conditions on the summation indices force $\ell_1=m_1$, $\ell_2=m_2$ and $\ell_3=m_3$.
\item Second, unpacking the definition, we see that $S\left( V_{0,1,0}^{(=,0)} \right)=S_{=,0}$.
\item Third, we observe that $S\left( V_{0,1,1}^{(=,0)} \right) = S(P_4)$ which was computed in Section \ref{ssec:ex_P4}.
%= -32+\frac{64}{\pi}+ \frac{128}{\pi^2}
\item Finally, by \eqref{eq:V1}, we get
\begin{align*}
S\left( V_{0,1,2}^{(=,0)} \right) = t^{-2} S\left( V_{0,1,1}^{(=,0)} \right) + t^{-2} S\left( V_{0,1,0}^{(=,0)} \right) - t^{-2} S\left( U_{0,1,0}^{(=,0)} \right) - t^{-2} S\left( \overline{U}_{0,1,0}^{(=,0)} \right).
\end{align*}
\end{itemize}
But we have $S\left( U_{0,1,0}^{(=,0)} \right) =S(P_3)$,
% 8-\frac{64}{3\pi}
which has been computed in \cref{ssec:ex_P3}.
Moreover, $S\left( \overline{U}_{0,1,0}^{(=,0)} \right) = S_{=,0}$ (the conditions force here the variable indexed by the root vertex to be equal to $0$), which gives us
\begin{align*}
S\left( V_{0,1,2}^{(=,0)} \right) = t^{-2} S\left( P_4 \right) - t^{-2} S(P_3) .
\end{align*}
We can finally write:

\begin{align*}
S(T) &= S\left( V_{2,0,1}^{(=,0)} \right)\\
&= \frac{1}{3} \left[ S\left( V_{0,1,2}^{(=,0)} \right) + 2 S\left( V_{0,1,1}^{(=,0)} \right) \cdot S_{=,0} + S\left( V_{0,1,0}^{(=,0)} \right) \cdot (S_{=,0})^2 +2 S\left( V_{0,3,0}^{(=,0)} \right) \right] - S\left( V_{2,0,0}^{(=,0)} \right) \cdot S_{=,0}\\
&= \frac{1}{3} \left[ t^{-2} S\left( P_4 \right) - t^{-2} S(P_3) + 2 S\left( P_4 \right) S_{=,0} +3(S_{=,0})^3 \right] - (S_{=,0})^3\\
&= \frac{1}{3}  S\left( P_4 \right) (t^{-2}+2\, S_{=,0}) - \frac{t^{-2}} {3} S\left( P_3 \right).
\end{align*}
Using the expression for $S_{=,0}$, $S(P_3)$ and $S(P_4)$ given in  \cref{ssec:ex_P2,ssec:ex_P3,ssec:ex_P4}
respectively, we get
\[S(T)=\frac{ H_1^{3}+3 H_1^{2}- \left(16 t^2 +1\right) H_1 +32 t^2 H_2 -(48 t^2 +3)}{192 t^{6}}
, \]
where
\[
H_1 = {}_{2}^{}{{}{{}{F_{1}^{}}}}\big(-\tfrac12,-\tfrac12; 1;16t^2\big)
\quad \text{and} \quad
H_2 = {}_{2}^{}{{}{{}{F_{1}^{}}}}\big(-\tfrac12,\tfrac12; 2;16t^2\big) .
\]
Using $H_1(1/4)=\frac4{\pi}$ and $H_2(1/4)=\frac8{3\pi}$, this specializes to
\begin{align*}
S(T)(1/4) = -128 - \frac{512}{9 \pi} + \frac{1024}{\pi^2} + \frac{4096}{3 \pi^3}.
\end{align*}

\section{Explicit sums for trees with at most 7 vertices}\label{appendix:explicit_sums}

Up to graph isomorphism, there are  24 graphs with at most 7 vertices:
1 tree with 2 vertices,
1 tree with 3 vertices,
2 trees with 4 vertices,
3 trees with 5 vertices,
6 trees with 6 vertices
and
11 trees with 7 vertices.
% https://oeis.org/A000055
They are depicted in \cref{fig:trees2-6,fig:trees7}, themselves borrowed from Chapter 1 of Steinbach's ``Field Guide to Simple Graphs''~\cite{Steinbach99}.

We present explicit expressions for the sums $S(T)$ and their evaluations $S(T)\left( \frac14\right)$ for these 24 graphs. As predicted by~\cref{thm:sum-pi}, the sums $S(T)$ are expressed in terms of the two hypergeometric functions
\[
h_1 \coloneqq H_1(\sqrt{t}) = {}_{2}^{}{{}{{}{F_{1}^{}}}}\big(-\tfrac12,-\tfrac12; 1;16t\big)
\quad \text{and} \quad
h_2 \coloneqq H_2(\sqrt{t}) = {}_{2}^{}{{}{{}{F_{1}^{}}}}\big(-\tfrac12,\tfrac12; 2;16t\big),
\]
while their evaluation $S(T)\left( \frac14\right)$ are given as polynomials in $1/\pi$.
They were obtained using the (algebraic) algorithm mentioned in the first paragraph of~\cref{rem:algo_variant}.

\medskip 
\subsection{Trees with 2 vertices}
\subsubsection{Tree $T_{2,a}$}

\begin{eqnarray*}
S(T)(\sqrt{t}) 
& = & 
\frac{h_1 - 1}{4 t}
=
1+t +4 t^{2}+25 t^{3}+196 t^{4}+1764 t^{5}+17424 t^{6}+\cdots, \\
% 184041 t^{7}+2044900 t^{8}+23639044 t^{9}+282105616 t^{10}+
S(T)\left( \frac14\right)
& =& 
\frac{16}{\pi}-4 
\approx 1.092958
.
\end{eqnarray*}

\begin{figure}[t]
  \begin{center}
	  \begin{tabular}{lllll}
    \includegraphics[scale=0.3,angle=90]{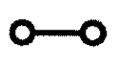} &  
    \qquad \includegraphics[scale=0.3,angle=90]{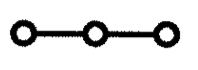} &    
	\qquad \includegraphics[scale=0.3,angle=90]{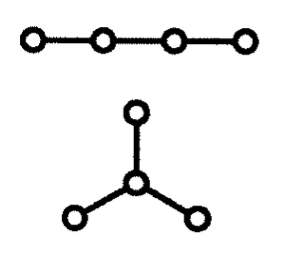}  & 
    \qquad \includegraphics[scale=0.3,angle=90]{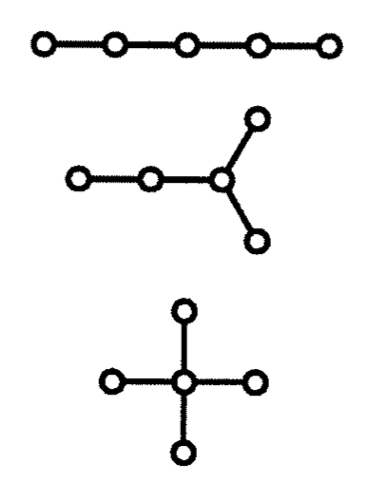} & 
	\qquad \includegraphics[scale=0.3,angle=90]{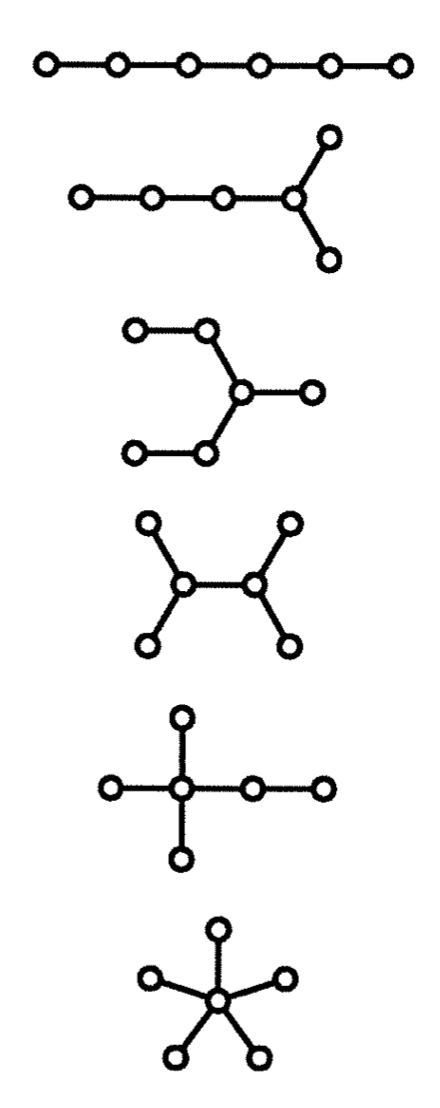}
	  \end{tabular}
  \end{center}
  \caption{Trees with 2, 3, 4, 5 and 6 vertices; they are numbered $T_{2,a}, \ldots, T_{6,f}$.}
  \label{fig:trees2-6}
\end{figure}

\medskip 
\subsection{Trees with 3 vertices}
\subsubsection{Tree $T_{3,a}$}

\begin{eqnarray*}
S(T)(\sqrt{t}) 
& =  & 
\frac{1- h_2}{2 t}
=
1+2 t +10 t^{2}+70 t^{3}+588 t^{4}+5544 t^{5}+56628 t^{6}+\cdots, \\
% 613470 t^{7}+6952660 t^{8}+81662152 t^{9}+\mathrm{O}\! \left(t^{10}\right)
S(T)\left( \frac14\right)
& = & 
-\frac{64}{3 \pi} + 8
\approx 1.209389
.
\end{eqnarray*}

\medskip 
\subsection{Trees with 4 vertices}
\subsubsection{Tree $T_{4,a}$}
\begin{eqnarray*}
S(T)(\sqrt{t}) 
& = & 
\frac{h_1^{2}+2 h_1-16 t -3}{32 t^{2}} \\
& = & 
1+3 t +17 t^{2}+127 t^{3}+1111 t^{4}+10772 t^{5}+112305 t^{6}+\cdots,\\
S(T)\left( \frac14\right)
& = & 
\frac{128}{\pi^{2}}+\frac{64}{\pi}-32 
\approx 1.340944
.
\end{eqnarray*}

\subsubsection{Tree $T_{4,b}$}
\begin{eqnarray*}
S(T)(\sqrt{t}) 
& = & 
\frac{ h_1 - 
\left(6 t +1\right) \, h_2 }{20 t^{2}}
 \\
& = & 
1+3 t +18 t^{2}+140 t^{3}+1260 t^{4}+12474 t^{5}+132132 t^{6}+\cdots, \\
% 1472328 t^{7}+17065620 t^{8}+204155380 t^{9}+2506399896 t^{10}
S(T)\left( \frac14\right)
& = & 
\frac{64}{15 \pi} 
\approx 1.358122
.
\end{eqnarray*}

\medskip 
\subsection{Trees with 5 vertices}
\subsubsection{Tree $T_{5,a}$}
\begin{eqnarray*}
S(T)(\sqrt{t}) 
& = & 
\frac{ h_1-4 t -1}{4 t^{2}}
 \\
& = & 
1+4 t +25 t^{2}+196 t^{3}+1764 t^{4}+17424 t^{5}+184041 t^{6}+\cdots, \\
% 2044900 t^{7}+23639044 t^{8}+282105616 t^{9}+3455793796 t^{10}
S(T)\left( \frac14\right)
& =& 
\frac{256}{\pi}-80 
\approx 1.487331
.
\end{eqnarray*}

\subsubsection{Tree $T_{5,b}$}
\begin{eqnarray*}
S(T)(\sqrt{t}) 
& = & 
\frac{- h_1 h_2
- h_1
- h_2+ 4 t +3}{8 t^{2}} \\
& = & 
1+4 t +26 t^{2}+211 t^{3}+1952 t^{4}+19708 t^{5}+211880 t^{6}+\cdots,  \\
% 2388653 t^{7}+27949224 t^{8}+336976394 t^{9}+4164321708 t^{10}
S(T)\left( \frac14\right)
& = & 
-\frac{1024}{3 \pi^{2}}-\frac{640}{3 \pi}+104 
\approx 1.509594
.
\end{eqnarray*}

\subsubsection{Tree $T_{5,c}$}
\begin{eqnarray*}
S(T)(\sqrt{t}) 
& = & 
\frac{
\left(2 t +1\right)
h_1 
- 
\left(32 t^{2}+8 t +1\right)
h_2}{140 t^{3}} \\
& = & 
1+4 t +28 t^{2}+240 t^{3}+2310 t^{4}+24024 t^{5}+264264 t^{6}+ \cdots, \\
% 3033888 t^{7}+36027420 t^{8}+439719280 t^{9}+5490209296 t^{10}+
S(T)\left( \frac14\right)
& = & 
\frac{512}{105 \pi} 
\approx 1.552140
.
\end{eqnarray*}

\medskip 
\subsection{Trees with 6 vertices}
\subsubsection{Tree $T_{6,a}$}
\begin{eqnarray*}
S(T)(\sqrt{t}) 
& = & 
\frac{
h_1^{3}
-3 h_1^{2}
- \left(112 t +13\right) h_1 
-256 t h_2  +432 t +15}{384 t^{3}}
 \\
& = & 
1+5 t +34 t^{2}+278 t^{3}+2563 t^{4}+25701 t^{5}+274210 t^{6}+ \cdots,\\
% +3068235 t^{7}+35649363 t^{8}+427047294 t^{9}+5246437051 t^{10}
S(T)\left( \frac14\right)
& = & 
\frac{2048}{3 \pi^{3}}-\frac{512}{\pi^{2}}-\frac{11776}{9 \pi}+448
\approx 1.649799
 .
\end{eqnarray*}

\subsubsection{Tree $T_{6,b}$}
\begin{eqnarray*}
S(T)(\sqrt{t}) 
& = & 
-\frac{h_2+2 t -1}{4 t^{2}}
 \\
& = & 
1+5 t +35 t^{2}+294 t^{3}+2772 t^{4}+28314 t^{5}+306735 t^{6}+\cdots,\\
% 3476330 t^{7}+40831076 t^{8}+493684828 t^{9}+6114096716 t^{10}
S(T)\left( \frac14\right)
& = & 
-\frac{512}{3 \pi} + 56
\approx 1.675113
 .
\end{eqnarray*}

\subsubsection{Tree $T_{6,c}$} \label{ssec:T6c}
\begin{eqnarray*}
S(T)(\sqrt{t}) 
& = & 
\frac{ h_1^{3}+3 h_1^{2}- \left(16 t +1\right) h_1 +32 t h_2 -(48 t +3)}{192 t^{3}}
 \\
& = & 
1+5 t +35 t^{2}+295 t^{3}+2794 t^{4}+28671 t^{5}+311963 t^{6}+\cdots, \\
% 3549757 t^{7}+41844618 t^{8}+507589987 t^{9}+6304763530 t^{10}
S(T)\left( \frac14\right)
& = & 
\frac{4096}{3 \pi^{3}}+\frac{1024}{\pi^{2}}-\frac{512}{9 \pi}-128
\approx 1.678691
 .
\end{eqnarray*}

\subsubsection{Tree $T_{6,d}$}
\begin{eqnarray*}
S(T)(\sqrt{t}) 
& = & 
\frac{ h_1^{2}+8 t h_2^{2}  +  \left(8 t +2\right) h_1+16 t h_2  - (48 t +3)}{32 t^{3}}
 \\
& = & 
1+5 t +36 t^{2}+311 t^{3}+3004 t^{4}+31313 t^{5}+345064 t^{6}+\cdots, \\
% 3967699 t^{7}+47182664 t^{8}+576611447 t^{9}+7207946448 t^{10}
S(T)\left( \frac14\right)
& = & 
\frac{22528}{9 \pi^{2}}+\frac{4864}{3 \pi}-768
\approx 1.704609
 .
\end{eqnarray*}

\subsubsection{Tree $T_{6,e}$}
\begin{eqnarray*}
S(T)(\sqrt{t}) 
& = & 
\frac{h_1^{2} -\left(6 t +1\right) h_1 h_2 + 10 t h_2^{2} + h_1+ \left(14 t-1 \right) h_2+ 40 t^{2}-30 t}{80 t^{3}}
 \\
& = & 
1+5 t +37 t^{2}+327 t^{3}+3214 t^{4}+33954 t^{5}+378130 t^{6}+\cdots, \\
% 4384837 t^{7}+52505271 t^{8}+645363294 t^{9}+8106677136 t^{10}
S(T)\left( \frac14\right)
& = & 
\frac{13312}{45 \pi^{2}}+\frac{2816}{15 \pi}-88
\approx 1.730434
 .
\end{eqnarray*}

\subsubsection{Tree $T_{6,f}$}
\begin{eqnarray*}
S(T)(\sqrt{t}) 
& = & 
\frac{  \left(10 t^{2}+1\right) h_1-  \left(160 t^{3}+30 t^{2}+6 t +1\right) h_2}{840 t^{4}}
 \\
& = & 
1+5 t +40 t^{2}+375 t^{3}+3850 t^{4}+42042 t^{5}+480480 t^{6}+\cdots, \\
% 5688540 t^{7}+69283500 t^{8}+863734300 t^{9}+10980418592 t^{10}
S(T)\left( \frac14\right)
& = & 
\frac{256}{45 \pi}
\approx 1.810830
 .
\end{eqnarray*}

\begin{figure}[t]
  \begin{center}
    \includegraphics[scale=0.5,angle=90]{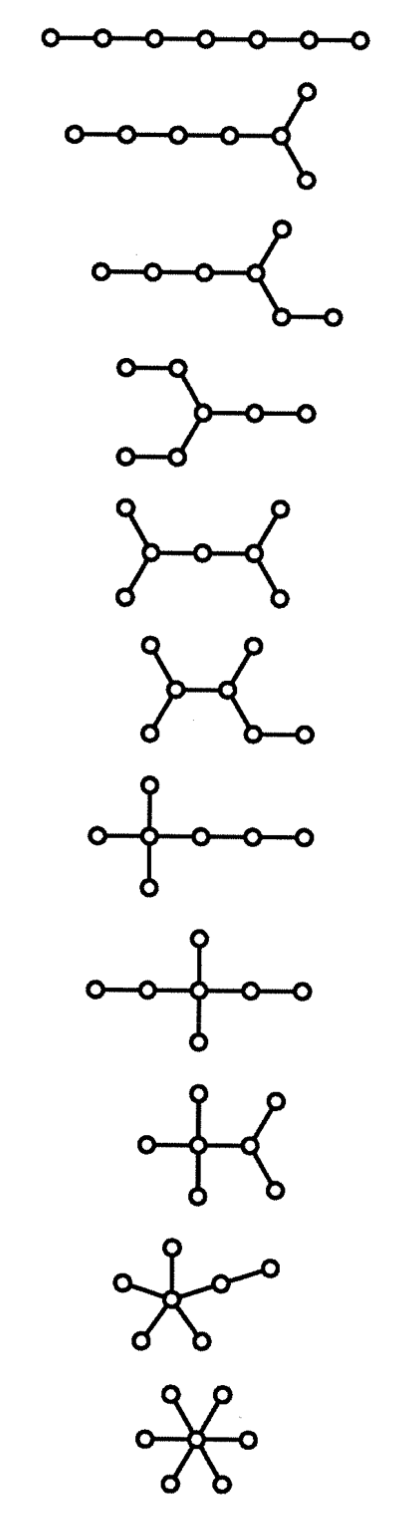}
  \end{center}
  \caption{Trees with 7 vertices, numbered $T_{7,a}, \ldots, T_{7,k}$.}
  \label{fig:trees7}
\end{figure}

\medskip 

\subsection{Trees with 7 vertices}
\subsubsection{Tree $T_{7,a}$}
\begin{eqnarray*}
S(T)(\sqrt{t}) 
& = & 
\frac{5 h_1^{2} + 6 h_1 + \left(64 t +4\right) h_2 - (120 t +15) }{80 t^{3}} \\
& = & 
1+6 t +44 t^{2}+374 t^{3}+3526 t^{4}+35850 t^{5}+385944 t^{6}+\cdots, \\
% 4344654 t^{7}+50690988 t^{8}+609016718 t^{9}+7497644864 t^{10}
S(T)\left( \frac14\right)
& = & 
\frac{4096}{\pi^{2}} +\frac{34816}{15 \pi} -1152 
\approx 
1.830035
 .
\end{eqnarray*}

\subsubsection{Tree $T_{7,b}$}
\begin{eqnarray*}
S(T)(\sqrt{t}) 
& = & 
\frac{ \left(40 t +54\right) h_1 +  \left(176 t +11\right) h_2 + 5 h_1^{2}
+10 h_1 h_2 - 5 h_1^{2} h_2-(440 t +75) }{320 t^{3}}\\
& = & 
1+6 t +45 t^{2}+391 t^{3}+3756 t^{4}+38790 t^{5}+423086 t^{6}+\cdots, \\
% 4344654 t^{7}+50690988 t^{8}+609016718 t^{9}+7497644864 t^{10}
S(T)\left( \frac14\right)
& = & 
-\frac{8192}{3 \pi^{3}}
+\frac{7168}{3 \pi^{2}}
+\frac{54656}{15 \pi}
-1312
\approx 1.858234
 .
\end{eqnarray*}

\subsubsection{Tree $T_{7,c}$}
\begin{eqnarray*}
S(T)(\sqrt{t}) 
& = & 
\frac{3- h_1-2 h_2}{4 t^{2}}
\\
& = & 
1+6 t +45 t^{2}+392 t^{3}+3780 t^{4}+39204 t^{5}+429429 t^{6}+\cdots, \\
% 4907760 t^{7}+58023108 t^{8}+705264040 t^{9}+8772399636 t^{10}
S(T)\left( \frac14\right)
& = & 
-\frac{1792}{3 \pi} + 192
\approx 1.862895
 .
\end{eqnarray*}

\subsubsection{Tree $T_{7,d}$}
\begin{eqnarray*}
S(T)(\sqrt{t}) 
& = & 
\frac{3 h_1^{2} + 32 t h_2 -2 h_1  - (48 t +1) }{32 t^{3}}\\
& = & 
1+6 t +45 t^{2}+393 t^{3}+3804 t^{4}+39618 t^{5}+435773 t^{6}+\cdots, \\
% 5000037 t^{7}+59332860 t^{8}+723646297 t^{9}+9029267388 t^{10}
S(T)\left( \frac14\right)
& = & 
\frac{6144}{\pi^{2}} -\frac{1024}{3 \pi} -512
\approx 1.867578
 .
\end{eqnarray*}

\subsubsection{Tree $T_{7,e}$}
\begin{eqnarray*}
S(T)(\sqrt{t}) 
& = & 
- \frac{ h_2^{2} + 4 t -1}{4 t^{2}}
\\
& = & 
1+6 t +46 t^{2}+408 t^{3}+3988 t^{4}+41788 t^{5}+461378 t^{6}+\cdots, \\
% 5306008 t^{7}+63048488 t^{8}+769517648 t^{9}+9604536048 t^{10}
S(T)\left( \frac14\right)
& = & 
-\frac{4096}{9 \pi^{2}}+48
\approx 1.887604
 .
\end{eqnarray*}

\subsubsection{Tree $T_{7,f}$}
\begin{eqnarray*}
S(T)(\sqrt{t}) 
& = & 
\frac{ 6 h_1 - \left(16 t +1\right) h_2 - 5 h_1^{2}- 10 h_1 h_2- 5 h_1^{2} h_2 + 80 t + 15}{320 t^{3}}
\\
& = & 
1+6 t +46 t^{2}+410 t^{3}+4035 t^{4}+42589 t^{5}+473562 t^{6}+\cdots, \\
% 5482402 t^{7}+65544521 t^{8}+804478213 t^{9}+10092402169 t^{10}
S(T)\left( \frac14\right)
& = & 
-\frac{8192}{3 \pi^{3}}
-\frac{7168}{3 \pi^{2}}
+\frac{3584}{15 \pi}
+256
\approx 1.896571
 .
\end{eqnarray*}

\subsubsection{Tree $T_{7,g}$}
\begin{eqnarray*}
S(T)(\sqrt{t}) 
& = & 
\frac{5 t h_2^{2} +  h_1-  \left(6 t +1\right)h_2  -5 t }{40 t^{3}}
\\
& = & 
1+6 t +47 t^{2}+426 t^{3}+4243 t^{4}+45172 t^{5}+505475 t^{6}+\cdots, \\
% 5879806 t^{7}+70553446 t^{8}+868441124 t^{9}+10919694960 t^{10}
S(T)\left( \frac14\right)
& = & 
\frac{2048}{9 \pi^{2}}
+\frac{512}{15 \pi}
-32
\approx 1.921176
 .
\end{eqnarray*}

\subsubsection{Tree $T_{7,h}$}
\begin{eqnarray*}
S(T)(\sqrt{t}) 
& = & 
\frac{ \left(40 t +6\right) h_1 + \left(64 t -1\right) h_2 - 5 h_1^{2}- 10 h_1 h_2- 5 h_1^{2} h_2-(40 t - 15)}{160 t^{3}}
\\
& = & 
1+6 t +47 t^{2}+428 t^{3}+4290 t^{4}+45974 t^{5}+517695 t^{6}+\cdots, \\
% 6057044 t^{7}+73065934 t^{8}+903692386 t^{9}+11412404702 t^{10}
S(T)\left( \frac14\right)
& = & 
-\frac{16384}{3 \pi^{3}}
-\frac{14336}{3 \pi^{2}}
+\frac{5376}{5 \pi}
+320
\approx 1.930247
 .
\end{eqnarray*}

\subsubsection{Tree $T_{7,i}$}
\begin{eqnarray*}
S(T)(\sqrt{t}) 
& = & 
\frac{\left(24 t +4\right) h_2^{2}  - 34 h_1 -  \left(56 t +16\right) h_2 - 5 h_1^{2} - 24 h_1 h_2 + 240 t + 75}{160 t^{3}}
\\
& = & 
1+6 t +48 t^{2}+445 t^{3}+4524 t^{4}+49033 t^{5}+557248 t^{6}+\cdots, \\
% 6569795 t^{7}+79763096 t^{8}+991968137 t^{9}+12586984288 t^{10}
S(T)\left( \frac14\right)
& = & 
-\frac{342016}{45 \pi^{2}}
-\frac{24064}{5 \pi}
+2304
\approx 1.961159
 .
\end{eqnarray*}

\subsubsection{Tree $T_{7,j}$}
\begin{eqnarray*}
S(T)(\sqrt{t}) 
& = & 
\frac{  
 \left(4 t +2\right) h_1^{2} 
 -2 \left(32 t^{2}+22 t +1\right)  h_1 h_2 +  \left(168 t^{2}+28 t \right) h_2^{2}
 }{}\\
&  & 
\frac{- \left(24 t -2\right) h_1 + \left(104 t^{2}+12 t -2\right) h_2 + 560 t^{3}}{1120 t^{4}}\\
& = & 
1+6 t +50 t^{2}+480 t^{3}+5014 t^{4}+55504 t^{5}+641436 t^{6}+\cdots, \\
% 7665255 t^{7}+94102470 t^{8}+1181200504 t^{9}+15106187720 t^{10}
S(T)\left( \frac14\right)
& = & 
-\frac{4096}{315 \pi^{2}}
-\frac{512}{35 \pi}
+8
\approx 2.026084
 .
\end{eqnarray*}

\subsubsection{Tree $T_{7,k}$}
\begin{eqnarray*}
S(T)(\sqrt{t}) 
& = & 
\frac{  \left(48 t^{3}+7 t^{2}-3 t +1\right) h_1 -  \left(768 t^{4}+130 t^{3}+9 t^{2}+3 t +1\right) h_2}{4620 t^{5}}
\\
& = & 
1+6 t +54 t^{2}+550 t^{3}+6006 t^{4}+68796 t^{5}+816816 t^{6}+\cdots, \\
% 9976824 t^{7}+124710300 t^{8}+1589271112 t^{9}+20588284860 t^{10}
S(T)\left( \frac14\right)
& = & 
\frac{23552}{3465 \pi} 
\approx 2.163589
 .
\end{eqnarray*}

\section*{Acknowledgements}
We thank Eugene Zima for useful exchanges at an early stage of this project.
We are grateful to the organizers of the
\href{https://www.mfo.de/occasion/2250}{\emph{Workshop on Enumerative Combinatorics}} held in December 2022 at the Mathematisches Forschungsinstitut Oberwolfach; there, we had the opportunity to meet and start the collaboration that led to the current article.
During the workshop, the last part of~\cref{thm:sum-pi}, as well as \cref{coro:proba_meanders}, were presented as open problems, see the \href{https://publications.mfo.de/bitstream/handle/mfo/4019/OWR_2022_57.pdf}{Oberwolfach Report No. 57/2022, pages 3256 and 3298}.
This work has been supported by the French--Austrian project 
\href{https://anr.fr/Project-ANR-22-CE91-0007}{EAGLES}
(ANR-22-CE91-0007 \& FWF I6130-N)
and the ANR project \href{https://anr-louccoum-anr-louccoum.apps.math.cnrs.fr/}{LOUCCOUM} (ANR-24-CD40-7809).

\bibliographystyle{BoFeTh25}

\begin{thebibliography}{BCvH{\etalchar{+}}17}

\bibitem[AAR99]{AAR99}
G.~E. Andrews, R.~Askey, and R.~Roy.
\newblock {\em Special functions}, volume~71 of {\em Encyclopedia of
  Mathematics and its Applications}.
\newblock Cambridge University Press, Cambridge, 1999.

\bibitem[ABD19]{ABD19}
B.~Adamczewski, J.~P. Bell, and E.~Delaygue.
\newblock Algebraic independence of {$G$}-functions and congruences ``\`a la
  {L}ucas''.
\newblock {\em Ann. Sci. \'{E}c. Norm. Sup\'{e}r. (4)}, 52(3):515--559, 2019.

\bibitem[AP94]{AbPe94}
S.~A. Abramov and M.~Petkov\v{s}ek.
\newblock D'Alembertian solutions of linear differential and difference
  equations.
\newblock In {\em Proceedings of the International Symposium on Symbolic and
  Algebraic Computation}, ISSAC '94, page 169–174, New York, NY, USA, 1994.
  Association for Computing Machinery.

\bibitem[BB01]{BaBr01}
D.~H. Bailey and D.~J. Broadhurst.
\newblock Parallel integer relation detection: techniques and applications.
\newblock {\em Math. Comp.}, 70(236):1719--1736, 2001.

\bibitem[BCvH{\etalchar{+}}17]{BoChHoKaPe17}
A.~Bostan, F.~Chyzak, M.~van Hoeij, M.~Kauers, and L.~Pech.
\newblock Hypergeometric expressions for generating functions of walks with
  small steps in the quarter plane.
\newblock {\em European J. Combin.}, 61:242--275, 2017.

\bibitem[BGP23]{borga2023geometry_meandric}
J.~Borga, E.~Gwynne, and M.~Park.
\newblock On the geometry of uniform meandric systems.
\newblock {\em Commun. Math. Phys.}, 404(1):439--494, 2023.

\bibitem[BL94]{BeLa94}
B.~Beckermann and G.~Labahn.
\newblock A uniform approach for the fast computation of matrix-type {P}ad\'{e}
  approximants.
\newblock {\em SIAM J. Matrix Anal. Appl.}, 15(3):804--823, 1994.

\bibitem[BLS13]{BoLaSa13}
A.~Bostan, P.~Lairez, and B.~Salvy.
\newblock Creative telescoping for rational functions using the
  {G}riffiths-{D}work method.
\newblock In {\em I{SSAC} 2013---{P}roceedings of the 38th {I}nternational
  {S}ymposium on {S}ymbolic and {A}lgebraic {C}omputation}, pages 93--100. ACM,
  New York, 2013.

\bibitem[BLS17]{BoLaSa17}
A.~Bostan, P.~Lairez, and B.~Salvy.
\newblock Multiple binomial sums.
\newblock {\em J. Symbolic Comput.}, 80(part 2):351--386, 2017.

\bibitem[BRS24]{BoRiSa24}
A.~Bostan, T.~Rivoal, and B.~Salvy.
\newblock Minimization of differential equations and algebraic values of
  {$E$}-functions.
\newblock {\em Math. Comp.}, 93(347):1427--1472, 2024.

\bibitem[Chu76]{Chudnovsky76}
G.~V. Chudnovsky.
\newblock Algebraic independence of constants connected with the exponential
  and the elliptic functions.
\newblock {\em Dokl. Akad. Nauk Ukrain. SSR Ser. A}, (8):698--701, 767, 1976.

\bibitem[Chy00]{Chyzak00}
F.~Chyzak.
\newblock An extension of {Z}eilberger's fast algorithm to general holonomic
  functions.
\newblock volume 217, pages 115--134. 2000.
\newblock Formal power series and algebraic combinatorics (Vienna, 1997).

\bibitem[CKST19]{InfiniteNoodle}
N.~{Curien}, G.~{Kozma}, V.~{Sidoravicius}, and L.~{Tournier}.
\newblock {Uniqueness of the infinite noodle}.
\newblock {\em {Ann. Inst. Henri Poincar\'e D, Comb. Phys. Interact. (AIHPD)}},
  6(2):221--238, 2019.

\bibitem[FBA99]{FeBaAr99}
H.~R.~P. Ferguson, D.~H. Bailey, and S.~Arno.
\newblock Analysis of {PSLQ}, an integer relation finding algorithm.
\newblock {\em Math. Comp.}, 68(225):351--369, 1999.

\bibitem[FT23]{feray2023meandric}
V.~F{\'e}ray and P.~Th{\'e}venin.
\newblock Components in meandric systems and the infinite noodle.
\newblock {\em Int. Math. Res. Not.}, 2023(14):12538--12560, 2023.

\bibitem[GKP94]{GKP94}
R.~L. Graham, D.~E. Knuth, and O.~Patashnik.
\newblock {\em Concrete mathematics}.
\newblock Addison-Wesley Publishing Company, Reading, MA, second edition, 1994.
\newblock A foundation for computer science.

\bibitem[Kau23]{Kauers2023}
M.~Kauers.
\newblock {\em D-finite functions}, volume~30 of {\em Algorithms and
  Computation in Mathematics}.
\newblock Springer, Cham, [2023] \copyright 2023.

\bibitem[Kou10]{Koutschan10}
C.~Koutschan.
\newblock A fast approach to creative telescoping.
\newblock {\em Math. Comput. Sci.}, 4(2-3):259--266, 2010.

\bibitem[KW21]{KoWo21}
C.~Koutschan and E.~Wong.
\newblock Creative telescoping on multiple sums.
\newblock {\em Math. Comput. Sci.}, 15(3):483--498, 2021.

\bibitem[Lai16]{Lairez16}
P.~Lairez.
\newblock Computing periods of rational integrals.
\newblock {\em Math. Comp.}, 85(300):1719--1752, 2016.

\bibitem[Lam70]{Lambert1770}
J.-H. Lambert.
\newblock Observations analytiques.
\newblock {\em Nouveaux mémoires de l'Académie royale des sciences et
  belles-lettres}, pages 225--244, 1770.

\bibitem[LLL82]{LLL82}
A.~K. Lenstra, H.~W. Lenstra, Jr., and L.~Lov\'{a}sz.
\newblock Factoring polynomials with rational coefficients.
\newblock {\em Math. Ann.}, 261(4):515--534, 1982.

\bibitem[Mah76]{Mahler76}
K.~Mahler.
\newblock {\em Lectures on transcendental numbers}.
\newblock Lecture Notes in Mathematics, Vol. 546. Springer-Verlag, Berlin-New
  York, 1976.

\bibitem[Mez10]{Mezzarobba10}
M.~Mezzarobba.
\newblock Num{G}fun: a package for numerical and analytic computation and
  {D}-finite functions.
\newblock In {\em I{SSAC} 2010---{P}roceedings of the 2010 {I}nternational
  {S}ymposium on {S}ymbolic and {A}lgebraic {C}omputation}, pages 139--146.
  ACM, New York, 2010.

\bibitem[MS10]{MeSa10}
M.~Mezzarobba and B.~Salvy.
\newblock Effective bounds for {P}-recursive sequences.
\newblock {\em J. Symbolic Comput.}, 45(10):1075--1096, 2010.

\bibitem[NSV11]{NoStVi11}
A.~Novocin, D.~Stehl\'{e}, and G.~Villard.
\newblock An {LLL}-reduction algorithm with quasi-linear time complexity
  [extended abstract].
\newblock In {\em S{TOC}'11---{P}roceedings of the 43rd {ACM} {S}ymposium on
  {T}heory of {C}omputing}, pages 403--412. ACM, New York, 2011.

\bibitem[Pet92]{Petkovsek92}
M.~Petkov\v{s}ek.
\newblock Hypergeometric solutions of linear recurrences with polynomial
  coefficients.
\newblock {\em J. Symbolic Comput.}, 14(2-3):243--264, 1992.

\bibitem[Sal19]{Salvy19}
B.~Salvy.
\newblock Linear differential equations as a data structure.
\newblock {\em Found. Comput. Math.}, 19(5):1071--1112, 2019.

\bibitem[Sch07]{Schneider07}
C.~Schneider.
\newblock Symbolic summation assists combinatorics.
\newblock {\em S\'{e}m. Lothar. Combin.}, 56:Art. B56b, 36, 2006/07.

\bibitem[Sla66]{Slater66}
L.~J. Slater.
\newblock {\em Generalized hypergeometric functions}.
\newblock Cambridge University Press, Cambridge, 1966.

\bibitem[Ste99]{Steinbach99}
P.~Steinbach.
\newblock Field guide to simple graphs -- volume 3: The book of trees, 1999.
\newblock Published by Design Lab, Albuquerque Technical-Vocational Institute,
  Albuquerque, NM.

\bibitem[vH96]{Hoeij96}
M.~van Hoeij.
\newblock Rational solutions of the mixed differential equation and its
  application to factorization of differential operators.
\newblock In {\em Proceedings of the 1996 International Symposium on Symbolic
  and Algebraic Computation}, ISSAC '96, page 219–225, New York, NY, USA,
  1996. Association for Computing Machinery.

\bibitem[Wal06]{Waldschmidt06}
M.~Waldschmidt.
\newblock Transcendence of periods: the state of the art.
\newblock {\em Pure Appl. Math. Q.}, 2(2, Special Issue: In honor of John H.
  Coates. Part 2):435--463, 2006.

\bibitem[Zei90]{Zeilberger90}
D.~Zeilberger.
\newblock A holonomic systems approach to special functions identities.
\newblock {\em J. Comput. Appl. Math.}, 32(3):321--368, 1990.

\end{thebibliography}

\newcommand{\etalchar}[1]{$^{#1}$}

\end{document}